\numberwithin{equation}{section}
\newcounter{remarkcounter} \setcounter{remarkcounter}{0}
\theoremstyle{definition}
\newtheorem{remark}[remarkcounter]{Remark}
\newtheorem{theorem}{Theorem}[section]
\newtheorem{corollary}[theorem]{Corollary}
\newtheorem{lemma}[theorem]{Lemma}
\newtheorem{definition}{Definition}[section]
\newtheorem{ass}{Assumption}[section]
\DeclareMathOperator{\eps}{\varepsilon}
\def\ip3{IP$_3$}
\title{\vspace{-20pt}Generic Torus Canards}
\author{Theodore Vo
\footnote{\footnotesize{
Department of Mathematics and Statistics, 
Boston University, Boston, MA 02215, USA 
(\textcolor{blue}{theovo@bu.edu})
}}}
\begin{document}
\maketitle

\vspace{-10pt}
\begin{abstract}	\label{sec:abstract}
\noindent Torus canards are special solutions of slow/fast systems that alternate between attracting and repelling manifolds of limit cycles of the fast subsystem. A relatively new dynamic phenomenon, torus canards have been found in neural applications to mediate the transition from tonic spiking to bursting via amplitude-modulated spiking. In $\mathbb{R}^3$, torus canards are degenerate: they require one-parameter families of 2-fast/1-slow systems in order to be observed and even then, they only occur on exponentially thin parameter intervals. The addition of a second slow variable unfolds the torus canard phenomenon, making them generic and robust. That is, torus canards in slow/fast systems with (at least) two slow variables occur on open parameter sets. So far, generic torus canards have only been studied numerically, and their behaviour has been inferred based on averaging and canard theory. This approach, however, has not been rigorously justified since the averaging method breaks down near a fold of periodics, which is exactly where torus canards originate. In this work, we combine techniques from Floquet theory, averaging theory, and geometric singular perturbation theory to show that the average of a torus canard is a folded singularity canard. In so doing, we devise an analytic scheme for the identification and topological classification of torus canards in $\mathbb{R}^4$. We demonstrate the predictive power of our results in a model for intracellular calcium dynamics, where we explain the mechanisms underlying a novel class of elliptic bursting rhythms, called amplitude-modulated bursting, by constructing the torus canard analogues of mixed-mode oscillations. We also make explicit the connection between our results here with prior studies of torus canards and torus canard explosion in $\mathbb{R}^3$, and discuss how our methods can be extended to slow/fast systems of arbitrary (finite) dimension. 

\vspace{8pt}\noindent \textbf{Keywords}\qquad Torus canard, canard, geometric singular perturbation theory, folded singularity, averaging, bursting, spiking, amplitude-modulation, torus bifurcation

\vspace{10pt}\noindent \textbf{AMS subject classifications}\quad 34E17, 34C29, 34C15, 37N25, 34E15, 37G15, 34C20, 34C45
\end{abstract}

\section{Introduction}	\label{sec:intro}

Many biological systems exhibit complex oscillatory dynamics that evolve over multiple time-scales, such as the spiking and bursting activity of neurons, sinus rhythms in the beating of the heart, and intracellular calcium signalling. Such rhythms are often described by singularly perturbed systems of ordinary differential equations 
\begin{equation}	\label{eq:genericslowfast}
\begin{split}
\dot{x} &= f(x,y,\eps), \\
\dot{y} &= \eps g(x,y,\eps),
\end{split}
\end{equation}
where $0<\eps \ll 1$ is the ratio of slow and fast time-scales, $x \in \mathbb{R}^n$ is fast, $y \in \mathbb{R}^k$ is slow, and $f$ and $g$ are smooth functions. 
A relatively new type of oscillatory dynamic feature discovered in slow/fast systems with $n \geq 2$ is the so-called torus canard \cite{Kramer2008}. Torus canards are solutions of \eqref{eq:genericslowfast} that closely follow a family of attracting limit cycles of the fast subsystem of \eqref{eq:genericslowfast}, and then closely follow a family of repelling limit cycles of the fast subsystem of \eqref{eq:genericslowfast} for substantial times before being repelled. This unusual behaviour in the phase space typically manifests in the time course evolution as amplitude modulation of the rapid spiking waveform, as shown in Figure \ref{fig:amspiking}. 

\begin{figure}[h]
\centering
\includegraphics[width=5in]{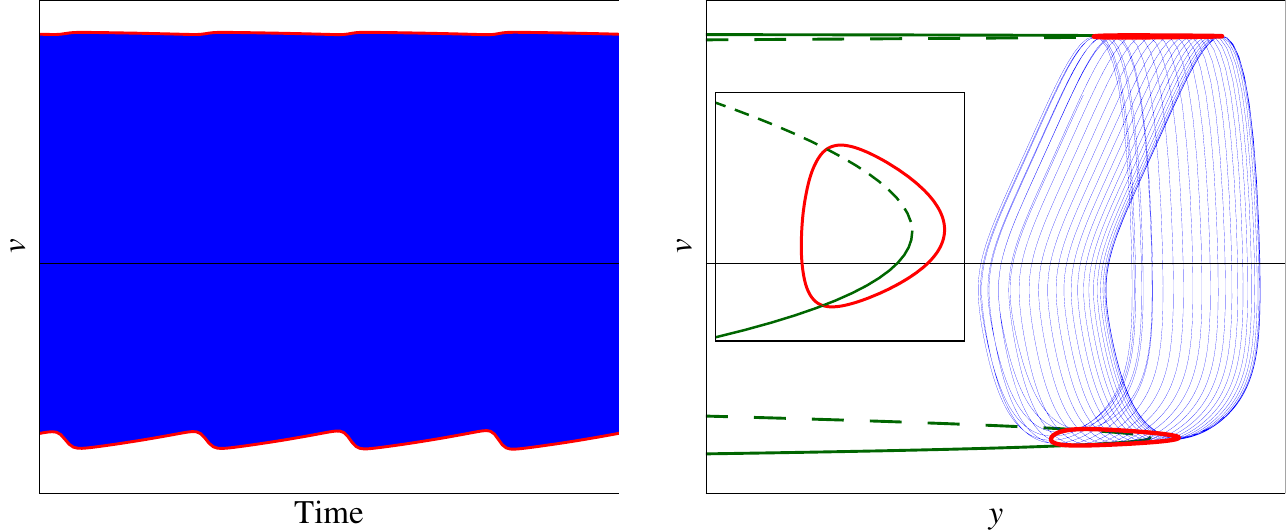}
\put(-364,142){(a)}
\put(-178,142){(b)}
\caption{A torus canard solution in a slow/fast system of the form \eqref{eq:genericslowfast} with a single slow variable $y$ and $n \geq 2$ fast variables, one of which is $v$. (a) The time evolution of the torus canard in this case is an amplitude-modulated spiking rhythm, which consists of rapid spiking (blue) wherein the envelope of the waveform (red) also oscillates. (b) The projection of the torus canard into the slow/fast phase plane shows that the torus canard arises in the neighbourhood of where an attracting family of limit cycles of the fast subsystem (green, solid) and a repelling family of limit cycles of the fast subsystem (green, dashed) meet (inset). The trajectory alternately spends long times following both the attracting and repelling branches of limit cycles.}
\label{fig:amspiking}
\end{figure}

First discovered in a model for the neuronal activity in cerebellar Purkinje cells \cite{Kramer2008}, torus canards were observed as quasi-periodic solutions that would appear during the transition between bursting and rapid spiking states of the system. Further insight into the dynamics of the torus canards in this cell model was presented in \cite{Benes2011}, where a 2-fast/1-slow rotated van der Pol-type equation with symmetry breaking was studied. Since then, torus canards have been encountered in several other neural models \cite{Burke2012}, such as Hindmarsh-Rose (subHopf/fold cycle bursting), Morris-Lecar-Terman (circle/fold cycle bursting), and Wilson-Cowan-Izhikevich (fold/fold cycle bursting), where they again appeared in the transition between spiking and bursting states. Additional studies have identified torus canards in chemical oscillators \cite{Straube2006}, and have shown that torus canards are capable of interacting with other dynamic features to create even more complicated oscillatory rhythms \cite{Desroches2012}. 

Three common threads link all of the examples mentioned above. First and foremost, the torus canards occur in the neighbourhood of a fold bifurcation of limit cycles, also known as a saddle-node of periodics (SNPOs), of the fast subsystem (see Figure \ref{fig:amspiking}(b) for instance). That is, the torus canards arise in the regions of phase space where an attracting set of limit cycles meets a repelling set of limit cycles. Second, the torus canards occur for parameter sets which are $\mathcal{O}(\eps)$ close to a torus bifurcation of the full system. Third, in these examples, there is only one slow variable, and the torus canards are restricted to exponentially thin parameter sets. In other words, the torus canards in these examples are degenerate. 

Torus canards in $\mathbb{R}^3$ require a one-parameter family of 2-fast/1-slow systems in order to be observed, and they undergo a very rapid transition from rapid spiking to bursting (i.e., torus canard explosion) in an exponentially thin parameter window $\mathcal{O}(\eps)$ close to a torus bifurcation of the full system. In principle, the addition of a second slow variable unfolds the torus canard phenomenon, making the torus canards generic and robust. This is analogous to the unfolding of planar canard cycles via the addition of a second slow variable. That is, canard solutions in $\mathbb{R}^3$ are generic and robust, and their properties are encoded in folded singularities of the reduced flow \cite{Szmolyan2001}. 

So far, to our knowledge, the only case study of torus canards in systems with more than one slow variable is in a model for respiratory rhythm generation in the pre-B\"{o}tzinger complex \cite{Roberts2015}, which is a 6-fast/2-slow system. There, the torus canards were studied numerically by averaging the slow motions over limit cycles of the fast subsystem and examining the averaged slow drift along the manifold of periodics. In particular, folded singularities of the averaged slow flow were numerically identified and the properties of the torus canards were inferred based on canard theory. From their observations, the authors in \cite{Roberts2015} conjectured that the average of a torus canard is a folded singularity canard. 

This leads to the generic torus canard problem, which can be stated simply as follows. There is currently no analytic way to identify, classify, and analyze torus canards in the same way that canards in $\mathbb{R}^3$ can be classified and analyzed based on their associated folded singularity. It has been suggested that averaging methods should be used \cite{Cymbalyuk2005,Roberts2015} to reduce the torus canard problem to a folded singularity problem in a related averaged system. However, this approach is not rigorously justified since the averaging method breaks down in a neighbourhood of a fold of limit cycles, which is precisely where the torus canards are located. Our main goal then is to extend the averaging method to the torus canard regime and hence solve the generic torus canard problem in $\mathbb{R}^4$.

There are three types of results in this article: theoretical, numerical, and phenomenological. The theoretical contribution is that we extend the averaging method to folded manifolds of limit cycles and hence to the torus canard regime. In so doing, we inherit Fenichel theory \cite{Fenichel1979,Jones1995} for persistent manifolds of limit cycles and in particular, we are able to make use of the powerful theoretical framework of canard theory \cite{Szmolyan2001,Wechselberger2005}. We provide analytic criteria for the identification and characterization of torus canards based on an underlying class of novel singularities for differential equations, which we call \emph{toral folded singularities}. We illustrate our assertions by studying a spatially homogeneous model for intracellular calcium dynamics \cite{Politi2006}. In applying our results to this model, we discover a novel type of bursting rhythm, which we call \emph{amplitude-modulated bursting} (see Figure \ref{fig:ambursting} for an example). 
We show that these amplitude-modulated bursting solutions can be well-understood using our torus canard theory. In the process, we provide the first numerical computations of intersecting invariant manifolds of limit cycles.
The new phenomenological result that stems from our analysis is that we construct the torus canard analogue of a canard-induced mixed-mode oscillation \cite{Brons2006}.  

\begin{figure}[h!]
\centering
\includegraphics[width=5in]{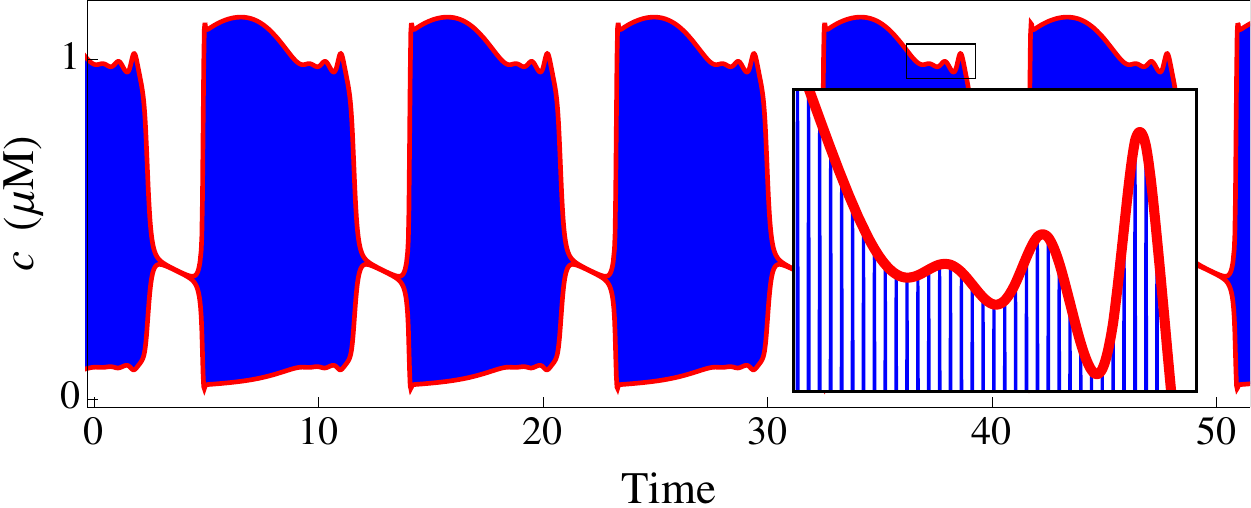}
\caption{Novel amplitude-modulated bursting rhythm discovered in a model for intracellular calcium dynamics (Section \ref{sec:PH}). The amplitude-modulated bursts alternate between active phases where the trajectory (blue) rapidly oscillates, and silent phases where the trajectory remains quiescent. During the active phase, the envelope (red) of the rapidly oscillating waveform exhibits small-amplitude oscillations which extend the burst duration. These amplitude-modulated bursts are torus canard-induced mixed-mode oscillations (Section \ref{sec:TCIMMO}).}
\label{fig:ambursting}
\end{figure}

\begin{remark}
Earlier reports of torus canards have been seen in the literature, even though that terminology was not used. 
In \cite{Neishtadt1987}, it was remarked that bifurcation delay may result when a trajectory crosses from a set of attracting states to a set of repelling states where the states may be either fixed points or limit cycles. 
In \cite{Izhikevich2000b,Izhikevich2001}, a canonical form for subcritical elliptic bursting near a Bautin bifurcation of the fast subsystem was studied. The canonical model consists of two fast (polar) variables $(r,\theta)$ and a single slow variable $u$. In these polar coordinates, the oscillatory states of the fast subsystem may be identified as stationary radii. Within this framework, torus canards occur as canard cycles of the planar $(r,u)$ subsystem, and in parameter space they arise in the rapid and continuous transition between the spiking and bursting regimes of the canonical model. 
\end{remark}

The outline of the paper is as follows. In Sections \ref{sec:averaging} and \ref{sec:classification}, we give the main theoretical results of the article. Namely, we state the generic torus canard problem in $\mathbb{R}^4$ in the case of two fast variables and two slow variables, and then combine techniques from Floquet theory \cite{Chicone2006}, averaging theory \cite{Sanders2007}, and geometric singular perturbation theory \cite{Fenichel1979,Jones1995} to show that the average of a torus canard is a folded singularity canard. In so doing, we devise analytic criteria for the identification and topological classification of torus canards based on their underlying toral folded singularity. We examine the main topological types of toral folded singularities and show that they encode properties of the torus canards, such as the number of torus canards that persist for $0< \eps \ll 1$. We then discuss bifurcations of torus canards and make the connection between torus canards and the torus bifurcation that is often observed in the full system. 

We apply our results to the Politi-H\"{o}fer model for intracellular calcium dynamics \cite{Politi2006} in Sections \ref{sec:PH}, \ref{sec:PHTC} and \ref{sec:TCIMMO}. We examine the bifurcation structure of the model and identify characteristic features that signal the presence of torus canards. Using our torus canard theory, we explain the dynamics underlying the novel class of amplitude-modulated bursting rhythms. We show that the amplitude modulation is organised locally in the phase space by twisted, intersecting invariant manifolds of limit cycles. These sections serve the dual purpose of illustrating the predictive power of our analysis, and also giving a representative example of how to implement those results in practice. 

In Section \ref{sec:explosion}, we make the connection between our current work on torus canards and prior work on torus canards in $\mathbb{R}^3$ explicit. We show that the theoretical framework developed in Sections \ref{sec:averaging} and \ref{sec:classification} can be used to compute the spiking/bursting boundary in the parameter spaces of 2-fast/1-slow systems by simply tracking the toral folded singularity. We illustrate these results in the Morris-Lecar-Terman, Hindmarsh-Rose, and Wilson-Cowan-Izhikevich models for neural bursting. 

In Section \ref{sec:arbitrarydimensions}, we extend our averaging method for folded manifolds of limit cycles to slow/fast systems with two fast variables and $k$ slow variables, where $k$ is any positive integer. Moreover, we provide asymptotic error estimates for the averaging method on folded manifolds of limit cycles. 
We then conclude in Section \ref{sec:discussion}, where we summarize the main results of the article, discuss their implications, and highlight several interesting open problems.

\section{Averaging Method for Folded Manifolds of Limit Cycles}	\label{sec:averaging}

In this section, we study generic torus canards in $\mathbb{R}^4$ in the case of two fast variables and two slow variables. 
In Section \ref{subsec:assumptions}, we state the assumptions of the generic torus canard problem in $\mathbb{R}^4$. Within this framework, we develop an averaging method for folded manifolds of limit cycles in Section \ref{subsec:theoretical} and derive a canonical form for the dynamics around a torus canard. In Section \ref{subsec:averagedcoefficients}, we list (algorithmically) the averaged coefficients that appear in the canonical form.

\subsection{Setup of the Generic Torus Canard Problem In $\mathbb{R}^4$} 	\label{subsec:assumptions}

We consider four-dimensional singularly perturbed systems of ordinary differential equations of the form
\begin{equation}	\label{eq:main}
\begin{split}
\dot{x} &= f(x,y,\eps), \\
\dot{y} &= \eps g(x,y,\eps), 
\end{split}
\end{equation}
where $0<\eps \ll 1$ measures the time-scale separation, $x \in \mathbb{R}^2$ is fast, $y \in \mathbb{R}^2$ is slow, $f$ and $g$ are sufficiently smooth functions, and $f, g$ and their derivatives are $\mathcal{O}(1)$ with respect to $\eps$. 

\begin{ass}	\label{ass:man}
The layer problem of system \eqref{eq:main}, given by 
\begin{equation}	\label{eq:layer}
\begin{split}
\dot{x} &= f(x,y,0),
\end{split}
\end{equation}
possesses a manifold $\mathcal{P}$ of limit cycles, parametrized by the slow variables. For each fixed $y \in \mathcal{P}$, let $\Gamma(t,y)$ denote the corresponding limit cycle and assume that $\Gamma(t,y)$ has finite, non-zero period $T(y)$. That is, 
\[ \mathcal{P} := \left\{ (\Gamma(t,y),y) \in \mathbb{R}^4 : \dot{\Gamma} = f(\Gamma(t,y),y,0) \text{ and } \Gamma(t,y)=\Gamma(t+T(y),y) \right\}. \]
\end{ass}
\bigskip

The Floquet exponents of $\Gamma(t,y)$ are given by
\[ \varphi_1 = 0, \text{ and }\, \varphi_2 = \frac{1}{T(y)}\int_0^{T(y)} \operatorname{tr} \, D_x f (\Gamma(t,y),y,0)\, dt, \]
where $\varphi_1$ corresponds to a Floquet multiplier equal to unity, which reflects the fact that $\Gamma(t,y)$ is neutrally stable to shifts along the periodic orbit \cite{Chicone2006}. The stability then, of the periodic orbit $\Gamma(t,y)$, is encoded in the Floquet exponent $\varphi_2$. If $\varphi_2<0$, then $\Gamma(t,y)$ is an asymptotically stable solution of \eqref{eq:layer} and if $\varphi_2>0$, then $\Gamma(t,y)$ is an unstable solution of \eqref{eq:layer}. 

\begin{ass}	\label{ass:fold}
The layer problem \eqref{eq:layer} possesses a manifold $\mathcal{P}_L$ of SNPOs given by
\[ \mathcal{P}_L := \left\{ (x,y) \in \mathcal{P} : \varphi_2 = \frac{1}{T(y)} \int_0^{T(y)} \operatorname{tr}\, D_x f (\Gamma(t,y),y,0)\, dt = 0 \right\}.  \]
Moreover, we assume that the manifold of periodics is a non-degenerate folded manifold so that $\mathcal{P}$ can be partitioned into attracting and repelling subsets, separated by the manifold of SNPOs. That is,
\begin{equation} 	\label{eq:Pdecomposition}
\mathcal{P} = \mathcal{P}_a \cup \mathcal{P}_L \cup \mathcal{P}_r,
\end{equation}
where $\mathcal{P}_a$ is the subset of $\mathcal{P}$ along which $\varphi_2<0$, and $\mathcal{P}_r$ is the subset of $\mathcal{P}$ along which $\varphi_2>0$.
\end{ass}
\medskip

We refer forward to Section \ref{subsec:averagedcoefficients} for a more precise formulation of the non-degeneracy condition that $\varphi_2$ changes sign along the manifold of SNPOs. A schematic of our setup is shown in Figure \ref{fig:setup}.

\begin{figure}[h]
\centering
\includegraphics[width=5in]{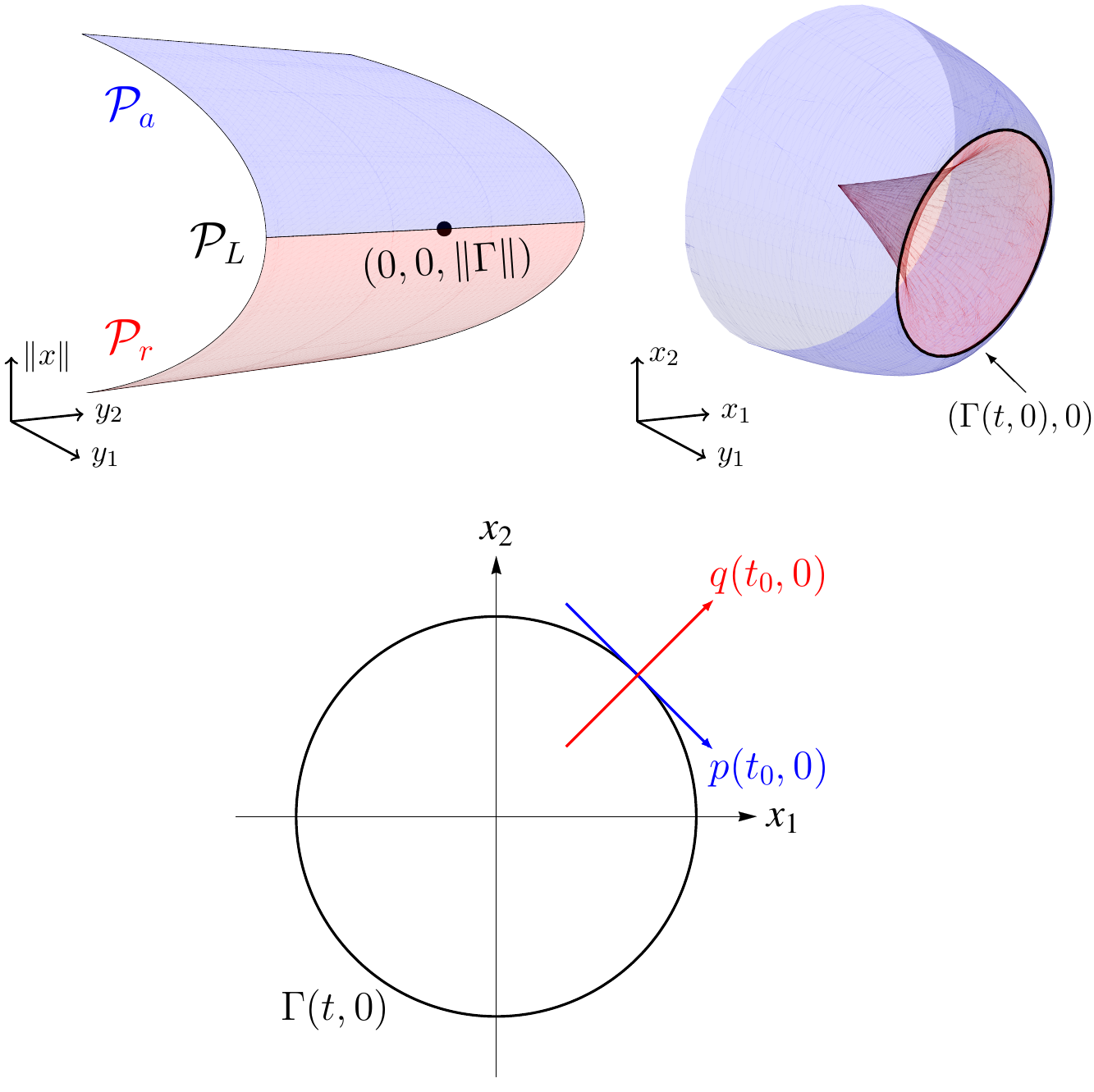}
\put(-360,340){(a)}
\put(-160,340){(b)}
\put(-284,182){(c)}
\caption{Projections of the geometric configuration under consideration. (a) Folded manifold of periodics in $(y_1,y_2,\lVert x \rVert)$-coordinates. Each point on the folded manifold $\mathcal{P}$ corresponds to a limit cycle of the layer problem \eqref{eq:layer}. 
 (b) Attracting (blue) and repelling (red) manifolds of limit cycles joined by the folded limit cycle $\Gamma(t,0)$ (which corresponds to the black marker in (a)) shown in the cross-section $y_2=0$. (c) The folded limit cycle $\Gamma(t,0)$, indicated by the black marker in (a), shown in the cross-section $y_1=y_2=0$, with unit tangent and normal vectors, $p(t_0,0)$ and $q(t_0,0)$, respectively for some fixed $t_0$.}
\label{fig:setup}
\end{figure}

\begin{ass}	\label{ass:homoclinic}
If the layer problem \eqref{eq:layer} has a critical manifold $\mathcal{S}$, then $\mathcal{S}$ and $\mathcal{P}_L$ are disjoint. 
\end{ass}

Assumption \ref{ass:homoclinic} guarantees that the periodic orbits of the layer problem in a neighbourhood of the manifold of SNPOs have finite period. Note that we are not eliminating the possibility of the manifold of limit cycles from intersecting the critical manifold $\mathcal{S}$, as would be the case near a set of Hopf bifurcations of the layer problem. Instead, we restrict the problem so that the SNPOs of \eqref{eq:layer} stay a reasonable distance from the critical manifold. 

An important step in the analysis to follow is identifying unit tangent and unit normal vectors to the periodic orbit, $\Gamma(t,y)$, of the layer problem. One choice of unit tangent and normal vectors, $p$ and $q$, to the periodic $\Gamma$ for fixed $y$, is given by
\[ p(t,y) = \frac{f(\Gamma,y,0)}{\lVert f(\Gamma,y,0) \rVert}, \,\, \text{ and } \,\, q(t,y) = \frac{J f(\Gamma,y,0)}{\Vert f(\Gamma,y,0) \rVert} = \frac{1}{\lVert f(\Gamma,y,0) \rVert} \begin{pmatrix} f_2(\Gamma,y,0) \\ -f_1(\Gamma,y,0) \end{pmatrix}, \]
where $\lVert \cdot \rVert$ denotes the standard Euclidean norm and $J$ is the skew-symmetric matrix $\begin{pmatrix} 0 & -1 \\ 1 & 0 \end{pmatrix}$.

\subsection{Averaging Theorem for Folded Manifolds of Limit Cycles} 	\label{subsec:theoretical}

The idea of averaging theory is to find a flow that approximates the slow flow on the family of periodic orbits of the layer problem \cite{Kuehn2015}. These averaging methods can be used to show that the effective slow dynamics on a family of asymptotically stable periodics are determined by an appropriately averaged system \cite{Pontryagin1960}. That is, the slow drift on $\mathcal{P}_a$ can be approximated by averaging out the fast oscillations, and the error in the approximation is $\mathcal{O}(\eps)$. However, to our knowledge, there are currently no theoretical results about the slow drift near folded manifolds of periodics. The following theorem extends the averaging method from normally hyperbolic manifolds of limit cycles to folded manifolds of limit cycles. 

\begin{theorem}[\bf{Averaging on Folded Manifolds of Limit Cycles}]	\label{thm:averaging}
Consider system \eqref{eq:main} under Assumptions \ref{ass:man}, \ref{ass:fold}, and \ref{ass:homoclinic}, and let $(\Gamma(t,y),y) \in \mathcal{P}_L$. Then there exists a sequence of near-identity transformations such that the averaged dynamics of \eqref{eq:main} in a neighbourhood of $(\Gamma(t,y),y)$ are approximated by
\begin{equation}	\label{eq:averaging}
\begin{split}
\dot{R} &= \overline{a}_1 u_1 + \overline{a}_2 u_2 + \overline{b} R^2 + \overline{c}_1 R\,u_1 + \overline{c}_2 R\, u_2+ \mathcal{O}(\eps,R^3, R^2(u_1+u_2), (u_1+u_2)^2), \\
\dot{u}_1 &= \eps \left( \overline{g}_1 + \overline{d}_1 R + \overline{e}_{11} u_1 +\overline{e}_{12}u_2 + \mathcal{O}(\eps,R(u_1+u_2),R^2) \right), \\
\dot{u}_2 &= \eps \left( \overline{g}_2 + \overline{d}_2 R + \overline{e}_{21} u_1 +\overline{e}_{22}u_2 + \mathcal{O}(\eps,R(u_1+u_2),R^2) \right),
\end{split}
\end{equation}
where an overbar denotes an average over one period of $\Gamma(t,y)$, and the coefficients in system \eqref{eq:averaging} can be computed explicitly (see Section \ref{subsec:averagedcoefficients}). 
\end{theorem}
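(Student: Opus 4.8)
\emph{Proof strategy.} The plan is to pass to a moving frame adapted to the fold limit cycle, expand the vector field, and then run the classical averaging normal-form procedure; the one genuinely new point is that the usual appeal to Fenichel theory for manifolds of periodics must be replaced, since normal hyperbolicity is lost along $\mathcal{P}_L$. First I would fix the base point $(\Gamma(t,y),y)\in\mathcal{P}_L$, translate so that this slow value is the origin, and introduce phase--amplitude coordinates $(\theta,\rho,u)$ by writing $x=\Gamma(\theta,0)+\rho\,q(\theta,0)$ and $y=u$, with $\theta$ taken modulo $T(0)$ and $p(\theta,0),q(\theta,0)$ the tangent/normal frame of Section~\ref{subsec:assumptions}; this is a diffeomorphism for $|\rho|$ small because the frame is nondegenerate and $\partial_\theta\Gamma=f(\Gamma,0,0)\neq0$. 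Substituting into \eqref{eq:main}, projecting $\dot x$ onto $p$ and $q$, and using $\langle q,q_\theta\rangle=0$ gives
\[
\dot\theta=1+\mathcal{O}(\rho,u,\eps),\qquad
\dot\rho=\langle q,\,f(\Gamma+\rho q,\,u,\,\eps)\rangle,\qquad
\dot u=\eps\,g(\Gamma+\rho q,\,u,\,\eps),
\]
and Taylor-expanding about $(\Gamma(\theta,0),0,0)$ with $f(\Gamma,0,0)=\lVert f\rVert\,p\perp q$ writes each right-hand side as a low-degree polynomial in $(\rho,u)$ with $T(0)$-periodic coefficients in $\theta$, plus higher-order and $\mathcal{O}(\eps)$ remainders: in $\dot\rho$ the linear coefficient is $\langle q,D_xf\,q\rangle$, the $\rho$-independent coefficient is $\langle q,D_yf\cdot u\rangle$, the quadratic coefficient is $\tfrac12\langle q,D^2_{xx}f(q,q)\rangle$, and so on, while $\dot u$ begins at $\eps\,g(\Gamma,0,0)$.

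The key algebraic observation, which explains the absence of a linear $R$-term in \eqref{eq:averaging}, is that the $\theta$-average of the transverse linear coefficient equals $\varphi_2$ and hence vanishes on $\mathcal{P}_L$. Differentiating $\partial_\theta(\lVert f\rVert p)=D_xf\,(\lVert f\rVert p)$ and pairing with $p$ gives $\langle p,D_xf\,p\rangle=(\ln\lVert f\rVert)'$, whose period average is zero since $\lVert f(\Gamma(\theta,0),0,0)\rVert$ is $T(0)$-periodic; because the trace is the same in any orthonormal basis, $\overline{\langle q,D_xf\,q\rangle}=\overline{\operatorname{tr}D_xf}-\overline{\langle p,D_xf\,p\rangle}=\varphi_2(0)=0$. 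After averaging, the surviving $\rho$-independent term is therefore $\overline{a}_1u_1+\overline{a}_2u_2$, the quadratic term is $\overline{b}\,R^2$, the mixed terms combine to $\overline{c}_1Ru_1+\overline{c}_2Ru_2$ (which also absorbs $D_u\varphi_2\cdot u\,\rho$), and $\dot u$ yields $\overline{g}_i+\overline{d}_iR+\overline{e}_{i1}u_1+\overline{e}_{i2}u_2$, all coefficients being the period averages computed in Section~\ref{subsec:averagedcoefficients}.

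The averaging step is then standard: since $\dot\theta$ is bounded away from zero on the neighbourhood, I would iterate near-identity transformations $\rho\mapsto\rho+(\cdots)$, $u\mapsto u+\eps(\cdots)$ in the style of \cite{Sanders2007}, at each stage solving a cohomological equation to eliminate the $\theta$-dependence of the lowest remaining order; this simultaneously realizes the Floquet normal form for the transverse linear part and averages the quadratic and slow terms, producing \eqref{eq:averaging} with the stated remainder. The main obstacle, and the step that separates this from classical averaging on a normally hyperbolic manifold of periodics, is showing that the remainder is uniformly $\mathcal{O}(\eps)$ up to $\mathcal{P}_L$, where Fenichel theory is unavailable because $\varphi_2=0$. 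I would handle this by observing that, in the transformed variables, the amplitude equation near the fold has a small right-hand side, $\dot R=\mathcal{O}(R^2,Ru,u,\eps)$, so $\theta$ is the only genuinely fast direction there; averaging over this single fast phase can then be justified directly by a Gronwall estimate on the discrepancy accumulated over $\mathcal{O}(1)$-length windows of the fast oscillation. Turning that into a sharp asymptotic bound is the technical core of the method, and is the error estimate for averaging on folded manifolds of periodics deferred to Section~\ref{sec:arbitrarydimensions}.
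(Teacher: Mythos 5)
Your proposal is correct and follows essentially the same route as the paper: a moving tangent--normal frame along the folded cycle with a radial perturbation in the $q$-direction, the key observation that the period average of the transverse linear coefficient $q\cdot(D_xf)\,q$ equals the nontrivial Floquet exponent and hence vanishes on $\mathcal{P}_L$ (your orthonormal-basis trace decomposition with $\langle p,D_xf\,p\rangle=(\ln\lVert f\rVert)'$ is the same identity the paper derives via the adjugate of $D_xf$ in Lemma \ref{lemma:linear}), followed by the periodic fundamental solution and iterated near-identity transformations that strip the fluctuating parts of each coefficient and leave only the averages. The paper likewise treats the theorem as a formal normal-form statement and defers the quantitative validity of the remainder to Section \ref{sec:arbitrarydimensions} (where it is obtained via blow-up rather than a bare Gronwall estimate), so your deferral of that step matches the paper's own structure.
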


\begin{remark}
The fast variable $R$ in system \eqref{eq:averaging} can be thought of as the averaged radial perturbation to $\Gamma(t,y)$ in the direction of $q$, and the slow variables $u$ describe the averaged evolution of $y$. 
\end{remark}

\begin{proof}
We present the proof of Theorem \ref{thm:averaging} in Section \ref{sec:arbitrarydimensions}. The idea of the proof is to switch to a coordinate frame that moves with the limit cycles, apply a coordinate transformation that removes the linear radial perturbation, and then average out the rapid oscillations.
\end{proof}

The significance of Theorem \ref{thm:averaging} is that the averaged radial-slow dynamics described by system \eqref{eq:averaging} are autonomous, singularly perturbed, and occur in the neighbourhood of a folded critical manifold. As such, system \eqref{eq:averaging} falls under the framework of canard theory \cite{Szmolyan2001,Wechselberger2005,Wechselberger2012}.

\begin{remark}	
Theorem \ref{thm:averaging} is a formal extension of the averaging method to folded manifolds of limit cycles.  
We defer the statement of asymptotic error estimates (i.e., the validity of this averaging method) to Section \ref{sec:arbitrarydimensions}. 
\end{remark} 

\subsection{Coefficients of the Averaged Vector Field \eqref{eq:averaging}}		\label{subsec:averagedcoefficients}

Here we list the averaged coefficients that appear in Theorem \ref{thm:averaging} (and Theorem \ref{thm:averagingarbitraryslow}). We denote the period of $\Gamma(t,y)$ by $T$. The functions $f, g$, and their derivatives are all evaluated at the limit cycle $(\Gamma(t,y),y)$ of the layer problem.
Recall that $p$ and $q$ denote unit tangent and unit normals to $\Gamma(t,y)$, respectively. We give the coefficients for the case of $2$-fast variables and $k$-slow variables, where $k \geq 1$.

Let $\Phi(t)$ be the fundamental solution defined by 
\[ \frac{d\Phi}{dt} =  \left( \operatorname{tr}\, D_x f - \frac{f \cdot (D_x f)\,f}{\lVert f \rVert^2} \right)  \, \Phi, \quad \Phi(0) = 1. \]
We will show in Section \ref{sec:arbitrarydimensions} that $\Phi(t)$ is $T$-periodic and bounded for all time (Lemma \ref{lemma:linear}). The coefficients of the linear slow terms and the quadratic radial term in the radial equation are given (component-wise) by
\begin{align*}
a_j &= \frac{1}{\Phi(t)} \left( (D_y f)^T \, q \right)_j, \quad j=1,2,\ldots, k,   \\ 
b &= \frac{1}{2} \Phi(t)\, q \cdot \begin{pmatrix} (q\cdot \nabla_x)^2 f_1 \\ (q\cdot \nabla_x)^2 f_2 \end{pmatrix}.
\end{align*}
Note that the coefficient $b$ of the quadratic $R$ term is a scalar. We compute the auxiliary quantities $\alpha_j$ and $\beta$ as solutions of 
\begin{align*}
\frac{d\alpha_j}{dt} &= a_j - \frac{1}{T} \int_0^T a_j \, dt, \quad \alpha_j(0)=0, \\
\frac{d\beta}{dt} &= b - \frac{1}{T} \int_0^T b \, dt, \quad \beta(0) = 0,
\end{align*}
for $j=1,2,\ldots, k$. We refer forward to equation \eqref{eq:NIT} for the interpretation of $\alpha_j$ and $\beta$. Using these auxiliary functions, we can compute the coefficients of the mixed terms in the radial equation according to
\[ c_j = (H^T \,q)_j +2\alpha_j b -2\beta \,\overline{a}_j,  \quad j=1,2,\ldots,k, \]
where the $2\times k$ matrix $H$ is given by 
\[ H_{ij} := q \cdot \nabla_x \left( \frac{\partial f_i}{\partial y_j} \right), \quad i=1,2, \quad j=1,2,\ldots,k, \]
and $\overline{a}_j$ is the average of $a_j$ over one period of $\Gamma(t,y)$. 

The coefficients of the linear $R$-terms in the slow equations are 
\[ d_j = \Phi(t)\, \left( (D_xg) \,q \right)_j  = \Phi(t)\, \left( \nabla_x\, g_j \right) \cdot q, \quad j=1,2,\ldots,k. \]
Finally, the $k \times k$ matrix of coefficients of the linear $u$-terms in the slow equations is
\[ e_{ij} = \frac{\partial g_i}{\partial y_j} + d_i \alpha_j, \quad i=1,2,\ldots,k,  \quad j=1,2,\ldots, k. \]

We can now simply list the averaged coefficients as
\[ 
\overline{\zeta}_j = \frac{1}{T} \int_0^T \zeta_j \, dt, \quad
\overline{b} = \frac{1}{T} \int_0^T b\, dt, \quad
\overline{e}_{ij} = \frac{1}{T} \int_0^T e_{ij} \, dt, 
\]
for $i=1,2$ and $j=1,2,\ldots, k$, where $\zeta = a, c, g$, or $d$. 
Note that the non-degeneracy condition in Assumption \ref{ass:fold} is given by the requirement that the averaged coefficient of the quadratic radial term is non-zero (i.e., $\overline{b} \neq 0$). We point out that the leading order terms in the averaged slow directions are simply given by the averages of the slow components of the vector field over one period of $\Gamma(t,y)$.

\section{Classification of Toral Folded Singularities \& Torus Canards}	\label{sec:classification}

We now study the dynamics of the averaged radial-slow system \eqref{eq:averaging} using geometric singular perturbation techniques. 
In Section \ref{subsec:toralfoldedsing}, we define the notion of a toral folded singularity -- a special limit cycle in the phase space from which torus canard dynamics can originate. 
We provide a topological classification of toral folded singularities and their associated torus canards in Section \ref{subsec:toralclass}. 
We then present the dynamics of the torus canards in the main cases, including those that exist near toral folded nodes (Section \ref{subsec:toralFN}), toral folded saddles (Section \ref{subsec:toralFS}), and toral folded saddle-nodes (Section \ref{subsec:toralFSN}).

\subsection{Toral Folded Singularities} 	\label{subsec:toralfoldedsing}

We begin our geometric singular perturbation analysis of system \eqref{eq:averaging} by rewriting it in the more succinct form
\begin{equation}	\label{eq:Ruslowfast}
\begin{split}
R^{\prime} &= F(R,u_1,u_2,\eps), \\
u_1^{\prime} &= \eps G_1(R,u_1,u_2,\eps), \\
u_2^{\prime} &= \eps G_2(R,u_1,u_2,\eps),
\end{split}
\end{equation}
where $F, G_1, G_2$ correspond to the right-hand-sides of \eqref{eq:averaging}, and the prime denotes derivatives with respect to the fast time $t=t_f$ (which is related to the slow time $t$ by $t=\eps t_f$). The idea is to decompose the dynamics of \eqref{eq:Ruslowfast} into its slow and fast motions by taking the singular limit on the slow and fast time-scales. 

The fast dynamics are approximated by solutions of the layer problem, 
\begin{equation}	\label{eq:Rulayer}
\begin{split}
R^\prime &= F(R,u_1,u_2,0), 
\end{split}
\end{equation}
where $u_1$ and $u_2$ are parameters, and \eqref{eq:Rulayer} was obtained by taking the singular limit $\eps \to 0$ in \eqref{eq:Ruslowfast}. The set of equilibria of \eqref{eq:Rulayer}, given by 
\[ \mathcal{S} := \left\{ (R,u_1,u_2) \in \mathbb{R}^3 : F(R,u_1,u_2,0) = 0 \right\}, \]
is called the critical manifold and is a key object in the geometric singular perturbations approach. Assuming at least one of $\overline{a}_1$ and $\overline{a}_2$ is non-zero, the critical manifold has a local graph representation, $u_1 = u_{1S}(R,u_2)$, say. In the case of \eqref{eq:Rulayer}, the critical manifold is (locally) a parabolic cylinder in the $(R,u_1,u_2)$ phase space. Linear stability analysis of the layer problem \eqref{eq:Rulayer} shows that the attracting and repelling sheets, $\mathcal{S}_a$ and $\mathcal{S}_r$, of the critical manifold are separated by a curve of fold bifurcations, defined by
\[ \mathcal{L} := \left\{ (R,u_1,u_2) \in \mathcal{S} : F_R = 0 \right\}.  \]
Note that $\mathcal{S}_a$ and $\mathcal{S}_r$ correspond to the attracting and repelling manifolds of limit cycles, $\mathcal{P}_a$ and $\mathcal{P}_r$, respectively, introduced in \eqref{eq:Pdecomposition}. Moreover, the fold curve $\mathcal{L}$ corresponds to the manifold of SNPOs, $\mathcal{P}_L$. 

To describe the slow dynamics along the critical manifold $\mathcal{S}$, we switch to the slow time-scale ($t = \eps t_f$) in system \eqref{eq:Ruslowfast} and take the singular limit $\eps \to 0$ to obtain the reduced system
\begin{equation}	\label{eq:Ruslow}
\begin{split}
0 &= F(R,u_1,u_2,0), \\
\dot{u_1} &= G_1(R,u_1,u_2,0), \\
\dot{u_2} &= G_2(R,u_1,u_2,0),
\end{split}
\end{equation}
where the overdot denotes derivatives with respect to $t$. Typically, to obtain a complete description of the flow on $\mathcal{S}$, we would need to compute the dynamics in an atlas of overlapping coordinate charts. In this case, and as is the case in many applications, we can use the graph representation of $\mathcal{S}$ to project the dynamics of \eqref{eq:Ruslow} onto a single coordinate chart $(R,u_2)$. The dynamics on $\mathcal{S}$ are then given by
\begin{equation}	\label{eq:Rureduced}
\begin{split}
-F_R \dot{R} &= F_{u_1} G_1 + F_{u_2} G_2, \\
\dot{u}_2 &= G_2,
\end{split}
\end{equation}
where all functions and their derivatives are evaluated along $\mathcal{S}$. An important feature of the reduced flow \eqref{eq:Rureduced} highlighted by this projection is that the reduced flow is singular along the fold curve $\mathcal{L}$. That is, solutions of the reduced flow blow-up in finite time at the fold curve and are expected to fall off the critical manifold. To remove this finite-time blow-up of solutions, we introduce the phase space dependent time-transformation $dt_s = -F_R\, dt_d$, which gives the \emph{desingularized system}
\begin{equation}	\label{eq:Rudesing}
\begin{split}
\dot{R} &= F_{u_1} G_1 + F_{u_2} G_2, \\
\dot{u}_2 &= -F_R \, G_2,
\end{split}
\end{equation}
where we have recycled the overdot to denote derivatives with respect to $t_d$, and $u_1 = u_{1S}(R,u_2)$. On the attracting sheets $\mathcal{S}_a$, the desingularized flow \eqref{eq:Rudesing} is (topologically) equivalent to the reduced flow \eqref{eq:Rureduced}. However, on the repelling sheets $\mathcal{S}_r$ (where $F_R>0$), the time transformation reverses the orientation of trajectories, and the reduced flow \eqref{eq:Rureduced} is obtained by reversing the direction of the flow of the desingularized system \eqref{eq:Rudesing}. Thus, the reduced flow can be understood by examining the desingularized flow and keeping track of the dynamics on both sheets of $\mathcal{S}$.

The desingularized system possesses two types of equilibria: \emph{ordinary} and \emph{folded}. The set of ordinary singularities
\[ \mathcal{E} := \left\{ (R,u_1,u_2) \in \mathcal{S} : G_1 = 0 \,\, \text{ and } \,\, G_2 = 0 \right\}, \]
consists of isolated points which are equilibria of both the reduced and desingularized flows and are $\mathcal{O}(\eps)$ close to equilibria of the fully perturbed problem \eqref{eq:Ruslowfast} provided they remain sufficiently far from the fold curve $\mathcal{L}$. The set of folded singularities
\[ \mathcal{M} := \left\{ (R,u_1,u_2) \in \mathcal{L} : \begin{pmatrix} F_{u_1} \\ F_{u_2} \end{pmatrix} \cdot \begin{pmatrix} G_1 \\ G_2 \end{pmatrix} = 0 \right\}, \]
consists of isolated points along the fold curve where the right-hand-side of the $R$-equation in \eqref{eq:Rudesing} (and \eqref{eq:Rureduced}) vanishes. Whilst folded singularities are equilibria of the desingularized system, they are not equilibria of the reduced system. Instead, folded singularities are points where the $R$-equation of the reduced system has a zero-over-zero type indeterminacy, which means trajectories may potentially pass through the folded singularity with finite speed and cross from one sheet of the critical manifold to another. That is, a folded singularity is a distinguished point on the fold curve where the reduced vector field may actually be regular. 

\begin{definition}[\bf{Toral Folded Singularity}] 	\label{def:identification}
System \eqref{eq:main} under assumptions \ref{ass:man}, \ref{ass:fold}, and \ref{ass:homoclinic}, possesses a \emph{folded singularity of limit cycles}, or a \emph{toral folded singularity} for short, if it has a limit cycle $(\Gamma(t,y),y) \in \mathcal{P}_L$, such that 
\begin{align} 	\label{eq:switching}
\begin{pmatrix} \overline{a}_1 \\ \overline{a}_2 \end{pmatrix} \cdot \begin{pmatrix} \overline{g}_1 \\ \overline{g}_2 \end{pmatrix} =0,
\end{align}
where $\overline{a}_j, \overline{g}_j$ for $j=1,2$, are the averaged coefficients in Theorem \ref{thm:averaging} (listed in Section \ref{subsec:averagedcoefficients}).
\end{definition}

\begin{remark}
In slow/fast systems with two slow variables and one fast variable, a folded singularity, $p$, of the reduced flow on a folded critical manifold is a point on the fold of the critical manifold where there is a violation of transversality:  
\[ \left. \begin{pmatrix} f_{y_1} \\ f_{y_2} \end{pmatrix} \cdot \begin{pmatrix} g_1 \\ g_2 \end{pmatrix} \right|_{p} = 0. \]
Geometrically, this  corresponds to the scenario in which the projection of the reduced flow into the slow variable plane is tangent to the fold curve at $p$. Definition \ref{def:identification} gives the averaged analogue for torus canards. More precisely, a toral folded singularity is a folded limit cycle $(\Gamma,y) \in \mathcal{P}_L$ such that the projection of the averaged slow drift along $\mathcal{P}$ into the averaged slow variable plane is tangent to the projection of $\mathcal{P}_L$ into the $(u_1,u_2)$-plane at the toral folded singularity (see Figure \ref{fig:PHtoralfn} for an example).
\end{remark}

We see that a toral folded singularity is a folded singularity of the $(R,u_1,u_2)$ system where solutions of the slow flow along the folded critical manifold can cross with finite speed from $\mathcal{S}_a$ to $\mathcal{S}_r$ (or vice versa). That is, a toral folded singularity allows for singular canard solutions of the averaged radial-slow flow. A singular canard solution of the $(R,u_1,u_2)$ system corresponds, in turn, to a solution in the original $(x,y)$ system that slowly drifts along the manifold of periodics $\mathcal{P}$ and crosses from $\mathcal{P}_a$ to $\mathcal{P}_r$ (or vice versa) via a toral folded singularity. 
Based on this, we define singular torus canard solutions as follows.

\begin{definition}[\bf{Singular Torus Canards}] 	\label{def:singTC}
Suppose system \eqref{eq:main} under assumptions \ref{ass:man}, \ref{ass:fold}, and \ref{ass:homoclinic} has a toral folded singularity. A \emph{singular torus canard} is
a singular canard solution of the averaged radial-slow system \eqref{eq:averaging}. A \emph{singular faux torus canard} is a singular faux canard solution of the averaged radial-slow system \eqref{eq:averaging}.
\end{definition}

\subsection{Classification of Toral Folded Singularities} 	\label{subsec:toralclass}

Canard theory classifies and characterizes singular canards based on their associated folded singularity. 
We have the following classification scheme for toral folded singularities. 
Suppose system \eqref{eq:main} under assumptions \ref{ass:man}, \ref{ass:fold}, and \ref{ass:homoclinic} possesses a toral folded singularity $(\Gamma,y)$. Let $\lambda_1, \lambda_2$ denote the eigenvalues of the desingularized flow of \eqref{eq:averaging}, linearized about the origin (i.e., about $(\Gamma,y)$). 

\begin{definition}[\bf{Classification of Toral Folded Singularities}] 	\label{def:class}
The toral folded singularity is
\begin{itemize}
\setlength{\itemsep}{0pt}
\item a \emph{toral folded saddle} if $\lambda_1<0<\lambda_2$, 
\item a \emph{toral folded saddle-node} if $\lambda_1 \neq 0$ and $\lambda_2 = 0$,
\item a \emph{toral folded node} if $\lambda_1 < \lambda_2 <0$, or a \emph{faux toral folded node} if $0< \lambda_2<\lambda_1$,
\item a \emph{degenerate toral folded node} if $\lambda_1=\lambda_2$, or
\item a \emph{toral folded focus} if $\lambda_1,\lambda_2 \in \mathbb{C}$.
\end{itemize}
\end{definition}

Thus, we may exploit the known results about the existence and dynamics of canards near classical folded singularities of the averaged radial-slow system in order to learn about the existence and dynamics of torus canards near toral folded singularities. Folded nodes, folded saddles, and folded saddle-nodes are known to possess singular canard solutions. We examine their toral analogues in Sections \ref{subsec:toralFN}, \ref{subsec:toralFS}, and \ref{subsec:toralFSN}, respectively. Folded foci possess no singular canards and so any trajectory of the slow drift along $\mathcal{P}$ that reaches the neighbourhood of a toral folded focus simply falls off the manifold of periodics.

\subsection{Toral Canards of Toral Folded Nodes} 	\label{subsec:toralFN}

In this section, we consider the toral folded node (TFN) and its unfolding in terms of the averaged radial-slow flow \eqref{eq:Ruslowfast}. 
Fenichel theory guarantees that the normally hyperbolic segments, $\mathcal{S}_a$ and $\mathcal{S}_r$, of $\mathcal{S}$ persist as invariant slow manifolds, $\mathcal{S}_a^{\eps}$ and $\mathcal{S}_r^{\eps}$,  of \eqref{eq:Ruslowfast} respectively for sufficiently small $\eps$. Fenichel theory breaks down in neighbourhoods of the fold curve $\mathcal{L}$ where normal hyperbolicity fails. The extension of $\mathcal{S}_a^{\eps}$ and $\mathcal{S}_r^{\eps}$ by the flow of \eqref{eq:Ruslowfast} into the neighbourhood of the TFN leads to a local twisting of the invariant slow manifolds around a common axis of rotation. This spiralling of $\mathcal{S}_a^{\eps}$ and $\mathcal{S}_r^{\eps}$ becomes more pronounced as the perturbation parameter $\eps$ is increased. Moreover, there is a finite number of intersections between $\mathcal{S}_a^{\eps}$ and $\mathcal{S}_r^{\eps}$. These intersections are known as (non-singular) maximal canards.    

The properties of the maximal canards associated to a TFN are encoded in the TFN itself in the following way. Let $\lambda_s < \lambda_w<0$ be the eigenvalues of the TFN, where we treat the TFN as an equilibrium of \eqref{eq:Rudesing}, and let $\mu := \lambda_w/\lambda_s$ be the eigenvalue ratio. Then, provided $\mu$ is bounded away from zero, the total number of maximal canards of \eqref{eq:Ruslowfast} that persist for sufficiently small $\eps$ is $s_{\max}+1$, where 
\begin{equation}  \label{eq:smax}
s_{\max} := \lfloor \frac{1+\mu}{2\mu} \rfloor,
\end{equation}
and $\lfloor \cdot \rfloor$ is the floor function \cite{Wechselberger2005,Wechselberger2012}. This follows by direct application of canard theory \cite{Szmolyan2001,Wechselberger2005} to the averaged radial-slow system \eqref{eq:Ruslowfast} in the case of a folded node. 
The first or outermost intersection of $\mathcal{S}_a^{\eps}$ and $\mathcal{S}_r^{\eps}$ is called the primary strong maximal canard, $\gamma_0$, and corresponds to the strong stable manifold of the TFN. The strong canard is also the local separatrix that separates the solutions which exhibit local small-amplitude oscillatory behaviour from monotone escape. That is, trajectories on $\mathcal{S}_a^{\eps}$ on one side of $\gamma_0$ execute a finite number of small oscillations, whilst trajectories on the other side of $\gamma_0$ simply jump away. 

The innermost intersection of $\mathcal{S}_a^{\eps}$ and $\mathcal{S}_r^{\eps}$ is the primary weak canard, $\gamma_w$, and corresponds to the weak eigendirection of the TFN. The weak canard plays the role of the axis of rotation for the invariant slow manifolds, which again follows directly from the results of \cite{Szmolyan2001,Wechselberger2005} applied to system \eqref{eq:Ruslowfast}. The remaining $s_{\max}-1$ secondary canards, $\gamma_i$, $i=1, \ldots, s_{\max}-1$, further partition $\mathcal{S}_a^{\eps}$ and $\mathcal{S}_r^{\eps}$ into sectors based on the rotational properties. That is, for $k=1,2,\ldots, s_{\max}$, the segments of $\mathcal{S}_a^{\eps}$ and $\mathcal{S}_r^{\eps}$ between $\gamma_{k-1}$ and $\gamma_k$ consist of orbit segments that exhibit $k$ small-amplitude oscillations in an $\mathcal{O}(\sqrt{\eps})$ neighbourhood of the TFN, where $\gamma_{s_{\max}} = \gamma_w$. 

Bifurcations of maximal canards occur at odd integer resonances in the eigenvalue ratio $\mu$. When $\mu^{-1}$ is an odd integer, there is a tangency between the invariant slow manifolds $\mathcal{S}_a^{\eps}$ and $\mathcal{S}_r^{\eps}$.  Increasing $\mu^{-1}$ through this odd integer value breaks the tangency between the slow manifolds resulting in two transverse intersections, i.e., two maximal canards, one of which is the weak canard. Thus, increasing $\mu^{-1}$ through an odd integer results in a branch of secondary canards that bifurcates from the axis of rotation \cite{Wechselberger2005}. 

Thus, in the case of a TFN, the averaged radial-slow dynamics \eqref{eq:Ruslowfast} are capable of generating singular canards, which perturb to maximal canards that twist around a common axis of rotation. Since a maximal canard of \eqref{eq:Ruslowfast}, by definition, lives at the intersection of (the extensions of) $\mathcal{S}_a^{\eps}$ and $\mathcal{S}_r^{\eps}$ in a neighbourhood of a folded node of \eqref{eq:Ruslowfast}, the corresponding trajectory in the original variables lives at the intersection of the extensions of $\mathcal{P}_a^{\eps}$ and $\mathcal{P}_r^{\eps}$ in the neighbourhood of the TFN (see Figure \ref{fig:PHinvariantmanifolds}). As such, we define a non-singular maximal torus canard as follows. 

\begin{definition}[\bf{Maximal Torus Canard}] 	\label{def:maximalTC}
Suppose system \eqref{eq:main} under assumptions \ref{ass:man}, \ref{ass:fold}, and \ref{ass:homoclinic} has a TFN singularity. A \emph{maximal torus canard} of \eqref{eq:main} is a trajectory corresponding to the intersection of the attracting and repelling invariant manifolds of limit cycles, $\mathcal{P}_a^{\eps}$ and $\mathcal{P}_r^{\eps}$, respectively, in a neighbourhood of the TFN. 
\end{definition}

The implication in moving from maximal canards of the averaged radial-slow system \eqref{eq:Ruslowfast} to maximal torus canards of the original system \eqref{eq:main} is that a torus canard associated to a TFN consists of three different types of motion working in concert. First, when the orbit is on $\mathcal{P}_a^{\eps}$, we have rapid oscillations due to limit cycles of the layer problem. The slow drift along $\mathcal{P}_a^{\eps}$ moves the rapidly oscillating orbit towards the manifold of SNPOs and in particular, towards the TFN. In a neighbourhood of the TFN, we have canard dynamics occurring within the envelope of the waveform. Combined, these motions (rapid oscillations due to limit cycles of the layer problem, slow drift along the manifold of limit cycles, and canard dynamics on the radial envelope) manifest as amplitude-modulated spiking rhythms. 

The maximal number of oscillations that the envelope of the waveform can execute is dictated by \eqref{eq:smax}. There are two ways in which oscillations can be added to or removed from the envelope. First is the creation of an additional secondary torus canard via odd integer resonances in $\mu^{-1}$. We conjecture that this bifurcation of torus canards will correspond to a torus doubling bifurcation. The other method of creating/destroying oscillations in the envelope is by keeping $\eps$ and $\mu$ fixed, and varying the position of the trajectory relative to the maximal torus canards. When the trajectory crosses a maximal torus canard, it moves into a different rotational sector, resulting in a change in the number of oscillations in the envelope (see Figure \ref{fig:PHsectors}).

\subsection{Toral Canards of Toral Folded Saddles} 	\label{subsec:toralFS}

In the case of a toral folded saddle, system \eqref{eq:Ruslowfast} possesses exactly one singular canard and one singular faux canard, which correspond to the stable and unstable manifolds of the toral folded saddle, respectively, when considered as an equilibrium of the desingularized flow \eqref{eq:Rudesing}. The singular canard in this case plays the role of a separatrix, which divides the phase space $\mathcal{S}$ between those trajectories that encounter the fold curve $\mathcal{L}$, and those that turn away from it. 

The unfolding in $\eps$ of the toral folded saddle of \eqref{eq:Ruslowfast} shows that only the singular canard persists as a transverse intersection of the invariant slow manifolds. There is no oscillatory behaviour associated with this maximal canard, and only those trajectories that are exponentially close to the maximal canard can follow the repelling slow manifold. Returning to the original $(x,y)$ variables, the toral folded saddle has precisely one maximal torus canard solution, which plays the role of a separatrix between those solutions that fall off the manifold of limit cycles at the manifold of SNPOs, and those that turn away from $\mathcal{P}_L$ and stay on $\mathcal{P}$. 

We remark that the toral faux canard of a toral folded saddle plays the role of an axis of rotation for local oscillatory solutions of \eqref{eq:Ruslowfast}, analogous to the toral weak canard in the TFN case. However, these oscillatory solutions of the averaged radial-slow system start on $\mathcal{S}_r^{\eps}$ and move to $\mathcal{S}_a^{\eps}$. That is, there is a family of faux torus canard solutions associated to the toral folded saddle that start on $\mathcal{P}_r^{\eps}$ and move to $\mathcal{P}_a^{\eps}$. Since these solutions are inherently unstable, we leave further investigation of their dynamics to future work.

\subsection{Toral Folded Saddle-Node of Type II \& Torus Bifurcation} 	\label{subsec:toralFSN}

In canard theory, the special case in which one of the eigenvalues of the folded singularity is zero is called a folded saddle-node (FSN). The FSN comes in a variety of flavours, each corresponding to a different codimension-1 bifurcation of the desingularized reduced flow. The most common types seen in applications are the FSN I \cite{Vo2015} and the FSN II \cite{Krupa2010}. The FSN I occurs when a folded node and a folded saddle collide and annihilate each other in a saddle-node bifurcation of folded singularities. Geometrically, the center manifold of the FSN I is tangent to the fold curve. It has been shown that $\mathcal{O}(\eps^{-1/4})$ canards persist near the FSN I limit for sufficiently small $\eps$. The FSN II occurs when a folded singularity and an ordinary singularity coalesce and swap stability in a transcritical bifurcation of the desingularized reduced flow \eqref{eq:Rudesing}. In this case, the center manifold of the FSN II is transverse to the fold curve and it has been shown that $\mathcal{O}(\eps^{-1/2})$ canards persist for sufficiently small $\eps$ \cite{Krupa2010}. 

Here we show that toral folded singularities may also be of the FSN types. We focus on the toral FSN II and its implications for torus canards. In analogy with the classic FSN II points, we define a \emph{toral FSN II} to be a FSN II of the averaged radial-slow system \eqref{eq:Ruslowfast}. These occur when an ordinary singularity of \eqref{eq:Ruslowfast} crosses the fold curve, i.e., under the conditions 
\[ F=0, \quad F_R=0, \quad G_1 =0, \quad \text{ and } \quad G_2=0, \]
in which case the folded singularity automatically has a zero eigenvalue. In the classic FSN II, there is always a Hopf bifurcation at an $\mathcal{O}(\eps)$-distance from the fold curve \cite{Guckenheimer2008,Krupa2010}. Since the toral FSN II is (by definition) a FSN II of system \eqref{eq:Ruslowfast}, we have that system \eqref{eq:Ruslowfast} will possess a Hopf bifurcation located at an $\mathcal{O}(\eps)$-distance from the fold curve. 

In terms of the original, non-averaged $(x,y)$ coordinates, the toral FSN II is detected as a limit cycle, $\Gamma$, of the layer problem such that
\[ \int_0^{T(y)} \operatorname{tr}\, D_x f \, dt = 0, \quad \int_0^{T(y)} g_1 \, dt =0, \quad \text{ and } \quad \int_0^{T(y)} g_2 \, dt =0, \]
along $\Gamma$. That is, the toral FSN II occurs when the averaged slow nullclines intersect the manifold of SNPOs. Moreover, the averaged radial-slow dynamics undergo a singular Hopf bifurcation \cite{Guckenheimer2008,Krupa2010,Kuehn2015}, from which a family of small-amplitude limit cycles of the averaged radial-slow system emanate. This creation of limit cycles in the radial envelope corresponds to the birth of an invariant phase space torus in the non-averaged, fully perturbed problem.  As such, we conjecture that the toral FSN II unfolds in $\eps$ to a singular torus bifurcation of the fully perturbed problem. We provide numerical evidence to support this conjecture in Sections \ref{subsec:PHsingvsnonsing}, \ref{subsubsec:MLTtoralFS}, \ref{subsec:TCHR}, and \ref{subsec:TCWCI}.

\section{The Politi-H\"{o}fer Model for Intracellular Calcium Dynamics}		\label{sec:PH}

We now demonstrate (in Sections \ref{sec:PH} -- \ref{sec:TCIMMO}) the predictive power of our analytic framework for generic torus canards, developed in Sections \ref{sec:averaging} and \ref{sec:classification}, in the Politi-H\"{o}fer (PH) model \cite{Politi2006}. This model describes the interaction between calcium transport processes and the metabolism of inositol (1,4,5)-trisphosphate (\ip3), which is a calcium-releasing messenger. In Section \ref{subsec:PHmodel}, we describe the PH model for intracellular calcium dynamics. In Section \ref{subsec:PHbifn}, we investigate the bifurcation structure of the PH model, and report on a novel class of amplitude-modulated subcritical elliptic bursting rhythms. 
In the parameter space, these exist between the tonic spiking and bursting regimes. 
In Section \ref{subsec:PHlayer} we formally show that the PH model is a 2-fast/2-slow system and follow in Section \ref{subsec:PHgeometry} by showing that it falls under the framework of our torus canard theory. 

\subsection{The Politi-H\"{o}fer Model} 	\label{subsec:PHmodel}

Changes in the concentration of free intracellular calcium play a crucial role in the biological function of most cell types \cite{Keener2008}. In many of these cells, the calcium concentration is seen to oscillate, and an understanding of how these oscillations arise and determining the mechanisms that generate them is a significant mathematical and biological pursuit.   

The biological process in which calcium is able to activate calcium release from internal stores is known as calcium-induced calcium release. The sequence of events leading to calcium-induced calcium release is as follows. An agonist binds to a receptor in the external plasma membrane of a cell, which initiates a chain of reactions that lead to the release of \ip3 inside that cell. The \ip3 binds to \ip3 receptors on the endoplasmic reticulum, which leads to the release of calcium from the internal store through the \ip3 receptors. 

That there are calcium oscillations is indicative of the fact that there are feedback mechanisms from calcium to the metabolism of \ip3 at work. Mathematical modelling of these feedback mechanisms is broadly split into three classes. Class I models assume that the \ip3 receptors are quickly activated by the binding of calcium, and then slowly inactivated by slow binding of the calcium to separate binding sites. That is, class I models feature sequential (fast) positive and then (slow) negative feedback on the \ip3 receptor. Class II models assume that the calcium itself regulates the production and degradation rates of \ip3, which provides an alternative mechanism for negative and positive feedback. The most biologically realistic scenario incorporates both mechanisms (i.e., calcium feedback on \ip3 receptor dynamics as well as on \ip3 dynamics), and such models are known as hybrid models. 

One hybrid model for calcium oscillations is the PH model \cite{Politi2006}, which includes four variables; the calcium concentration $c$ in the cytoplasm, the calcium concentration $c_t$ in the endoplasmic reticulum stores, the fraction $r$ of \ip3 receptors that have not been inactivated by calcium, and the concentration $p$ of \ip3 in the cytoplasm. The calcium flux through the \ip3 receptors is given by
\begin{align}
J_{\text{release}} = \left( k_1 \left( r \frac{c}{K_a+c}\, \frac{p}{K_p+p} \right)^3 + k_2 \right) (\gamma c_t - (1+\gamma)c),
\end{align}
where $\gamma$ is the ratio of the cytosolic volume to the endoplasmic reticulum volume. The active transport of calcium across the endoplasmic reticulum (via sarco/endoplasmic reticulum ATP-ase or SERCA pumps) and plasma membrane are given respectively by the Hill functions
\begin{align}
J_{\text{serca}} &= V_{\text{serca}} \frac{c^2}{K_{\text{serca}}^2+c^2}, \,\, \text{ and }\,\, 
J_{\text{pm}} = V_{\text{pm}} \frac{c^2}{K_{\text{pm}}^2+c^2}.
\end{align}
The calcium flux into the cell via the plasma membrane is given by 
\begin{align} 
J_{\text{in}} = \nu_0 + \phi V_{PLC}, 
\end{align}
where $\nu_0$ is the leak into the cell, and $V_{PLC}$ is the steady-state concentration of \ip3 in the absence of any feedback effects of calcium on \ip3 concentration. 

The PH model equations are then given by
\begin{equation}	\label{eq:PHmodel}
\begin{split}
\dot{c} &= J_{\text{release}}-J_{\text{serca}}+\delta \left( J_{\text{in}} - J_{\text{pm}} \right), \\
\dot{c}_t &= \delta \left( J_{\text{in}} - J_{\text{pm}} \right), \\
\dot{r} &= \frac{1}{\tau_r} \left( 1-r \frac{K_i+c}{K_i} \right), \\
\dot{p} &= k_{3K} \left( V_{PLC} - \frac{c^2}{K_{3K}^2+c^2} p \right).
\end{split}
\end{equation}
Here the $c$ and $c_t$ equations describe the balance of calcium flux across the plasma membrane ($J_{\text{in}} - J_{\text{pm}}$) of the cell and the endoplasmic reticulum ($J_{\text{release}}-J_{\text{serca}}$). The  parameter $\delta$ measures the relative strength of the plasma membrane flux to the flux across the endoplasmic reticulum. Note that if $\delta =0$, then this creates a closed cell model wherein the total calcium in the cell is conserved. The $r$-equation describes the inactivation of \ip3 receptors by calcium whilst the $p$-equation describes the balance of (calcium-independent) \ip3 production and calcium-activated \ip3 degradation. The kinetic parameters, their standard values, and their biological significance are detailed in Table \ref{tab:PHparams} (Appendix \ref{app:PH}). Unless stated otherwise, all parameters will be fixed at these standard values.

\subsection{Dynamics of the Politi-H\"{o}fer Model} 	\label{subsec:PHbifn}

Following \cite{Harvey2011}, we take $V_{PLC}$ to be the principal bifurcation parameter, since it is relatively easy to manipulate in an experimental setting. The parameter $V_{PLC}$ represents the steady-state \ip3 concentration in the absence of calcium feedback. Note that the maximal rate of \ip3 formation is given by $k_{3K} V_{PLC}$. Variations in $V_{PLC}$ can generate a wide array of different behaviours in the PH model. Representative traces are shown in Figure \ref{fig:PHbifn}. 

\begin{figure}[ht!!]
\centering
\includegraphics[width=5in]{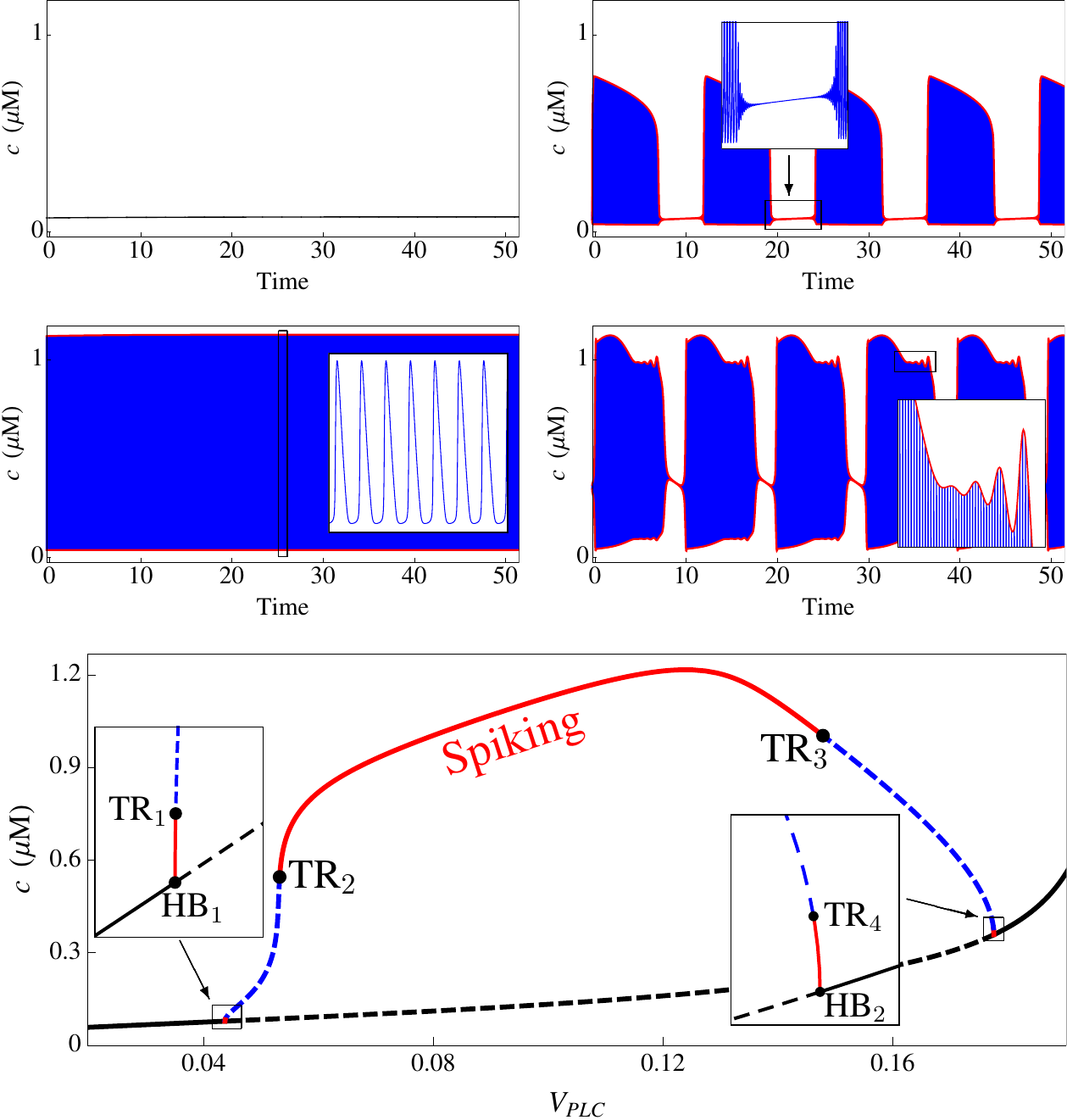}
\put(-364,372){(a)}
\put(-181,372){(b)}
\put(-364,262){(c)}
\put(-181,262){(d)}
\put(-364,150){(e)}
\caption{Dynamics of the PH model \eqref{eq:PHmodel} for $\eps=0.0035$ (i.e., $\delta = 0.472938$), and (a) $V_{PLC}=0.04\, \mu$M, (b) $V_{PLC} = 0.05 \, \mu$M, (c) $V_{PLC}=0.1\, \mu$M, and (d) $V_{PLC}=0.149\, \mu$M. (a) The attractor is a stable equilibrium. (b) The system exhibits subcritical elliptic bursting. The inset shows the small oscillations due to slow passage through a delayed Hopf bifurcation. (c) The model can also generate rapid spiking solutions. (d) The subcritical elliptic bursts here feature amplitude-modulation during the active burst phase. Inset: magnified view of the oscillations in the envelope of the waveform. (e) Bifurcation structure of \eqref{eq:PHmodel} with respect to $V_{PLC}$. The equilibria (black) change stability at Hopf bifurcations (HB). Emanating from the Hopf bifurcations are families of limit cycles, which change stability at torus bifurcations (TR). The torus bifurcations act as the boundaries between spiking (red) and bursting (blue) regions.}
\label{fig:PHbifn}
\end{figure}

For small \ip3 production rates (i.e., small $V_{PLC}$), the negative feedback of calcium on \ip3 metabolism overwhelms the production of \ip3. There are no calcium oscillations, and the system settles to a stable equilibrium (Figure \ref{fig:PHbifn}(a)). With increased \ip3 production rate, the system undergoes a supercritical Hopf bifurcation (labelled HB$_1$) at $V_{PLC} \approx 0.04370\,\mu$M, from which stable periodic orbits emanate. This family of periodics becomes unstable at a torus bifurcation (TR$_1$) at $V_{PLC} \approx 0.04734\,\mu$M and the stable spiking solutions give way to bursting trajectories (Figure \ref{fig:PHbifn}(b)). These bursting solutions are in fact subcritical elliptic (or subHopf/fold-cycle) bursts \cite{Izhikevich2000}. A subcritical elliptic burster has two primary bifurcations that determine its outcome. The active phase of the burst is initiated when the trajectory passes through a subcritical Hopf bifurcation of the layer problem, and terminates when the trajectory reaches a SNPO and falls off the manifold of limit cycles of the layer flow. These subcritical elliptic bursts persist in $V_{PLC}$ until there is another torus bifurcation (TR$_2$) at $V_{PLC} \approx 0.05323\,\mu$M, after which the system exhibits rapid spiking (Figure \ref{fig:PHbifn}(c)). 

The branch of spiking solutions remains stable until another torus bifurcation (TR$_3$) at $V_{PLC} \approx 0.1479~\mu$M is encountered. 
Initially, for $V_{PLC}$ values $\mathcal{O}(\eps)$ close to TR$_3$, the system exhibits amplitude-modulated spiking (not shown). 
The amplitude modulated spiking only exists on a very thin $V_{PLC}$ interval. Moreover, the amplitude modulation becomes more dramatic as $V_{PLC}$ increases until $V_{PLC} \approx 0.1489~\mu$M, after which the trajectory is a novel type of solution that combines features of amplitude-modulated spiking and bursting. These hybrid \emph{amplitude-modulated bursting} (AMB) solutions appear to be subcritical elliptic bursts with the added twist that there is amplitude modulation in the envelope of the waveform during the active burst phase (compare Figures \ref{fig:PHbifn}(b) and (d)). We will carefully examine these AMB rhythms in Section \ref{sec:TCIMMO}.

For sufficiently large $V_{PLC}$, we recover subcritical elliptic bursting solutions like those shown in Figure \ref{fig:PHbifn}(b), but with increasingly long silent phases. Eventually these subcritical elliptic bursting solutions disappear in the torus bifurcation TR$_4$ at $V_{PLC} \approx 0.1775~\mu$M and the attractor of the system is a spiking solution, which disappears in a supercritical Hopf bifurcation (HB$_2$) at $V_{PLC} \approx 0.1776\,\mu$M. The bifurcation structure of \eqref{eq:PHmodel} with respect to $V_{PLC}$ described above was computed using AUTO \cite{Doedel1981,Doedel2007} and is shown in Figure \ref{fig:PHbifn}(e).

Two primary features of our bifurcation analysis here signal the presence of multiple time-scale dynamics in system \eqref{eq:PHmodel}. Firstly, the presence of trajectories that have epochs of rapid spiking interspersed with silent phases (i.e., the bursting solutions) indicates that there is an intrinsic slow/fast structure. Second, the transition from rapid spiking to bursting via a torus bifurcation, together with the appearance of amplitude-modulated waveforms suggests the presence of torus canards, which naturally arise in slow/fast systems. Motivated by this, we now turn our attention to the problem of understanding the underlying mechanisms that generate these novel bursting rhythms. We will use the analytic framework developed in Sections \ref{sec:averaging} and \ref{sec:classification} as the basis of our understanding.

\subsection{Slow/Fast Decomposition of the Politi-H\"{o}fer Model} 	\label{subsec:PHlayer}

To demonstrate the existence of a separation of time-scales in system \eqref{eq:PHmodel}, we first perform a dimensional analysis, following a procedure similar to that of \cite{Harvey2011}. We define new dimensionless variables $C, C_t, P$, and $t_s$ via
\[ c = Q_c C, \quad c_t = Q_c C_t, \quad p = Q_p P, \,\, \text{ and }\,\, t = Q_t \, t_s, \]
where $Q_c$ and $Q_p$ are reference calcium and \ip3 concentrations, respectively, and $Q_t$ is a reference time-scale. Details of the non-dimensionalization are given in Appendix \ref{app:PH}. For the parameter set in Table \ref{tab:PHparams}, natural choices for $Q_c$ and $Q_p$ are $Q_c=1\, \mu$M and $Q_p = 1\, \mu$M. With these choices, a typical time scale for the dynamics of the calcium concentration $c$ is given by $T_c = \frac{Q_c}{V_{\text{serca}} \gamma} \approx 0.74$ s. The $c_t$ dynamics evolve much more slowly with a typical time scale $T_{c_t} = \frac{Q_c}{\delta V_{\text{pm}}} \approx 200$ s. The $r$ dynamics have a typical time-scale $T_r = \tau_r = 6.6$ s. The time-scale $T_p = \frac{Q_p}{k_{3k} V_{PLC}}$ for the $p$ dynamics depends on $V_{PLC}$ and can range from $50$ s (for $V_{PLC}=0.2~\mu$M) to $1000$ s (for $V_{PLC}=0.01~\mu$M). 

Setting the reference time-scale to be the slow time-scale (i.e., $Q_t = T_{c_t}$), and defining the dimensionless parameters
\[ \eps := \frac{\delta V_{\text{pm}}}{V_{\text{serca}} \gamma}, \,\, \hat{\tau}_r = \tau_r \frac{V_{\text{serca}} \gamma}{Q_c}, \,\, \text{ and }\,\, \hat{k}_{3K} = \frac{k_{3K}Q_c}{\delta V_{\text{pm}}}, \]
leads to the dimensionless version of the PH model
\begin{equation}	\label{eq:PHdimless}
\begin{split}
\eps \dot{C} &= f_1(C,r,C_t,P)+\eps g_1(C,r,C_t,P), \\
\eps \dot{r} &= f_2(C,r,C_t,P), \\
\dot{C_t} &= g_1(C,r,C_t,P), \\
\dot{P} &= g_2(C,r,C_t,P),
\end{split}
\end{equation}
where the overdot denotes derivatives with respect to $t_s$, and the functions $f_1,f_2,g_1$, and $g_2$ are given in Appendix \ref{app:PH}.
For the parameter set in Appendix \ref{app:PH}, we have that $\eps=0.0035$ is small. Thus, for a large regime of parameter space, the PH model is singularly perturbed, with two fast variables ($c,r$) and two slow variables ($c_t,p$).

\subsection{The Geometry of the Layer Problem} 	\label{subsec:PHgeometry}

We now show that the PH model satsfies Assumptions \ref{ass:man} -- \ref{ass:homoclinic}.  
The first step is to examine the bifurcation structure of the layer problem
\begin{equation} \label{eq:PHlayer}
\begin{split}
C^{\prime} &= f_1(C,r,C_t,P), \\
r^{\prime} &= f_2(C,r,C_t,P), 
\end{split}
\end{equation}
where the prime denotes derivatives with respect to the (dimensionless) fast time $t_f$, which is related (for non-zero perturbations) to the dimensionless slow time $t_s$ by $t_s = \eps t_f$. The geometric configuration of the layer problem \eqref{eq:PHlayer} is illustrated in Figure \ref{fig:PHlayerbifn}. 

\begin{figure}[ht!!]
\centering
\includegraphics[width=3.25in]{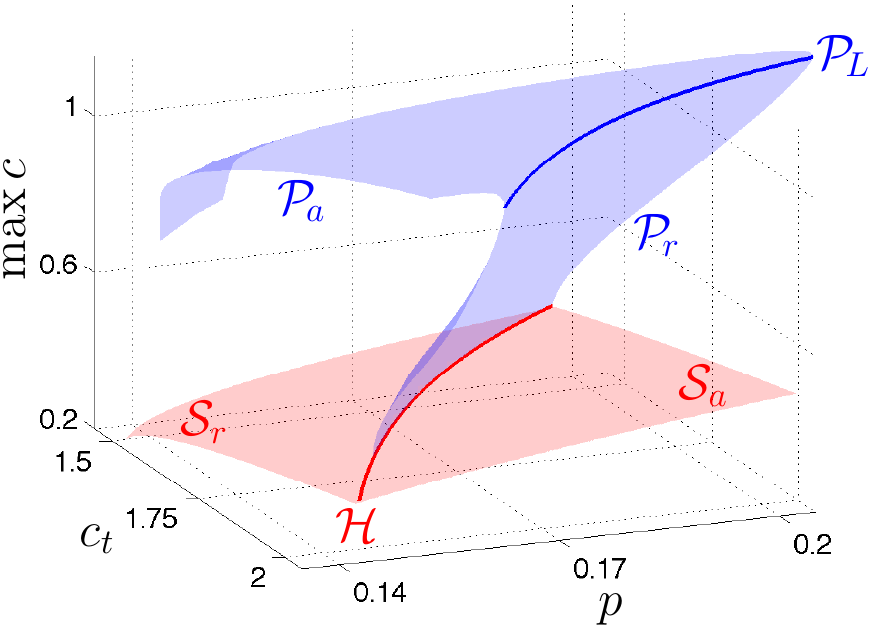}
\caption{Bifurcation structure of the layer problem \eqref{eq:PHlayer}, which is independent of $V_{PLC}$. The critical manifold (red surface) possesses a curve of subcritical Hopf bifurcations $\mathcal{H}$ (red curve) that separates the attracting and repelling sheets, $\mathcal{S}_a$ and $\mathcal{S}_r$.
%
Also shown is the maximum $c$-value for the manifold of limit cycles (blue surface) emanating from $\mathcal{H}$. The manifold of limit cycles consists of attracting and repelling subsets, $\mathcal{P}_a$ and $\mathcal{P}_r$, which meet in a manifold of SNPOs, $\mathcal{P}_L$.}
\label{fig:PHlayerbifn}
\end{figure}

System \eqref{eq:PHlayer} has a critical manifold, $\mathcal{S}$, with a curve of subcritical Hopf bifurcations, $\mathcal{H}$, that divide $\mathcal{S}$ between its attracting and repelling sheets, $\mathcal{S}_a$ and $\mathcal{S}_r$, respectively. 
The manifold, $\mathcal{P}_r$, of limit cycles that emerges from $\mathcal{H}$ is repelling. The repelling family of limit cycles meets an attracting family of limit cycles, $\mathcal{P}_a$, at a manifold of SNPOs, $\mathcal{P}_L$. Thus, the PH model has precisely the geometric configuration described in Section \ref{subsec:assumptions}. 

\begin{remark}
The bifurcation structure of \eqref{eq:PHlayer} contains many other features outside of the region shown here. The full critical manifold is cubic-shaped. Only part of that lies in the region shown in Figure \ref{fig:PHlayerbifn}, and outside this region, there are additional curves of fold and Hopf bifurcations, cusp bifurcations, and Bogdanov-Takens bifurcations. In this work, we are only concerned with the region of phase space presented in Figure \ref{fig:PHlayerbifn}.
\end{remark}

\section{Generic Torus Canards in the Politi-H\"{o}fer Model}		\label{sec:PHTC}

We now apply the results of Section \ref{sec:classification} to the PH model.  
In Section \ref{subsec:PHtoralFS}, we show that the PH model possesses toral folded singularities. We carefully examine the geometry and maximal torus canards in the case of a TFN in Section \ref{subsec:PHmanifolds}. In order to do so, we must compute the invariant manifolds of limit cycles, the numerical method for which is outlined in Section \ref{subsec:PHnumerical}. 
We then show in Section \ref{subsec:PHtoralFSclass} that the torus canards are generic and robust phenomena, and occur on open parameter sets. In this manner, we demonstrate the practical utility of our torus canard theory. 

\subsection{Identification of Toral Folded Singularities: A Representative Example} 	\label{subsec:PHtoralFS}

We now proceed to locate and classify toral folded singularities of the PH model.
For each limit cycle in $\mathcal{P}_L$, we numerically check the condition for toral folded singularities given in equation \eqref{eq:switching}, 
\[ \rho_0 := \overline{a}_1 \, \overline{g}_1 + \overline{a}_2 \, \overline{g}_2 =0, \]
where the overlined quantities are the averaged coefficients that appear in Theorem \ref{thm:averaging}. Recall, that the condition $\rho_0=0$ corresponds geometrically to the scenario in which the projection of the averaged slow drift into the slow variable plane is tangent to $\mathcal{P}_L$ (Figure \ref{fig:PHtoralfn}). For our computations, we take this condition to be satisfied if $\left| \rho_0 \right| < 10^{-9}$. 

For the representative parameter set given in Table \ref{tab:PHparams} (Appendix \ref{app:PH}), we find that the PH model possesses a toral folded singularity for $V_{PLC}= 0.149~\mu$M at 
\[ (c_t,p) = (c_t^*,p^*) \approx (1.912089102,0.174866719), \]
where $\rho_0 \approx -1.67 \times 10^{-10}$. 

\begin{figure}[h!]
\centering
\includegraphics[width=5in]{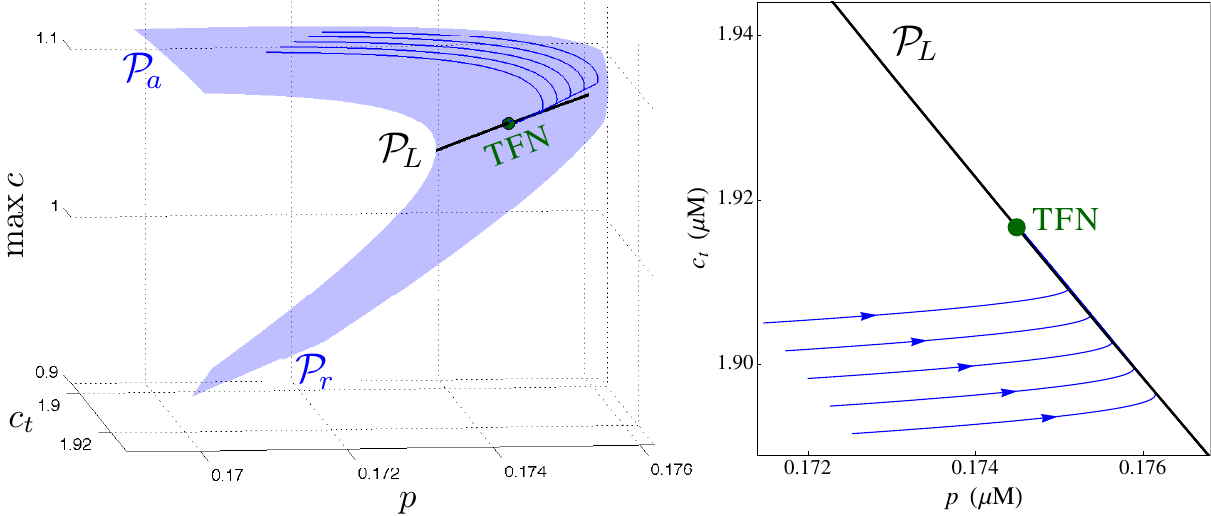}
\put(-364,146){(a)}
\put(-160,146){(b)}
\caption{TFN of the PH model for $V_{PLC}=0.149~\mu$M at $(c_t^*,p^*)$. (a) Projection of $\mathcal{P}$ in a neighbourhood of the TFN into the $(c_t,p,c)$ phase space. The blue curves show the envelopes of the slow drift along $\mathcal{P}$ for different initial conditions. (b) Projection into the slow variable plane. The singular slow drift along $\mathcal{P}$ is tangent to $\mathcal{P}_L$ at the TFN.}
\label{fig:PHtoralfn}
\end{figure}

Once the toral folded singularity has been located, we simply compute the remaining averaged coefficients from Theorem \ref{thm:averaging}, which allows us to compute the eigenvalues of the toral folded singularity and hence classify it according to the scheme in Definition \ref{def:class}. 
For the toral folded singularity at $(c_t^*,p^*)$, the eigenvalues are $\lambda_s \approx -3.2052$ and $\lambda_w \approx -0.127402$, so that we have a TFN. The associated eigenvalue ratio of the TFN is $\mu \approx 0.039749$, and so by \eqref{eq:smax}, the maximal number of oscillations that the envelope of the waveform can execute for sufficiently small $\eps$ is $s_{\max} = 13$ (compare with Figure \ref{fig:PHbifn}(d) where the envelope only oscillates 4 times).
We point out that the type of toral folded singularity can change with the parameters (see Section \ref{subsec:PHtoralFSclass}).

All other points on $\mathcal{P}_L$ are regular folded limit cycles. That is, $\rho_0 \neq 0$ at all other points on $\mathcal{P}_L$; and so, for $V_{PLC}=0.149~\mu$M, there is only a single TFN. 
Note that the TFN identified here is a simple zero of $\rho_0$, i.e., $\rho_0$ has opposite sign for points on $\mathcal{P}_L$ on either side of the TFN. 
This TFN is the natural candidate mechanism for generating torus canard dynamics.

\subsection{Toral Folded Node Canards in the Politi-H\"{o}fer Model} 	\label{subsec:PHmanifolds}

We now examine the geometry of the PH model in a neighbourhood of the TFN away from the singular limit. Recall that averaging theory \cite{Pontryagin1960,Sanders2007} together with Fenichel theory \cite{Fenichel1979,Jones1995} guarantees that normally hyperbolic manifolds of limit cycles, $\mathcal{P}_a$ and $\mathcal{P}_r$, persist as invariant manifolds of limit cycles, $\mathcal{P}_a^{\eps}$ and $\mathcal{P}_r^{\eps}$, for sufficiently small $\eps$. We showed in Section \ref{subsec:toralFN} that the extensions of $\mathcal{P}_a^{\eps}$ and $\mathcal{P}_r^{\eps}$ into a neighbourhood of a TFN results in a local twisting of these manifolds of limit cycles. 

\begin{figure}[h!!]
\centering
\includegraphics[width=5in]{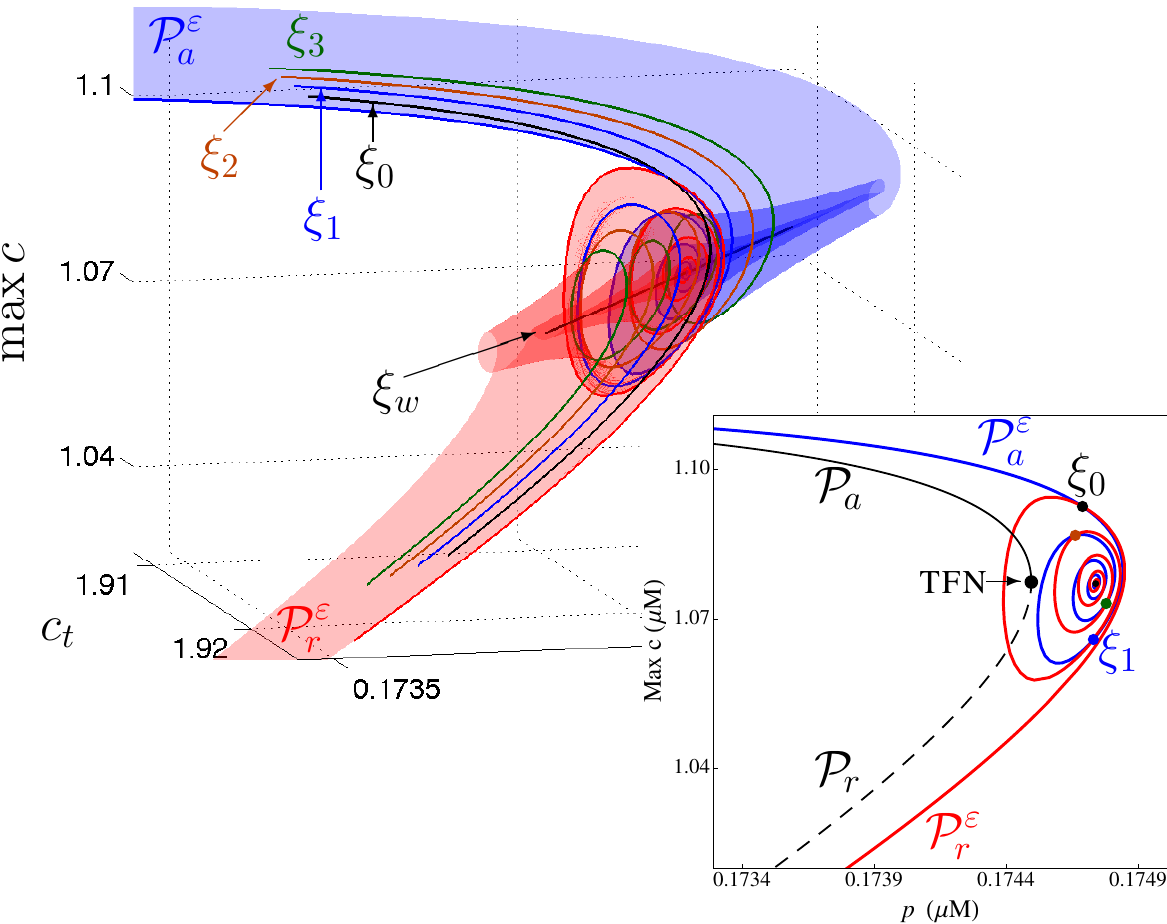}
\caption{Extensions of $\mathcal{P}_a^{\eps}$ and $\mathcal{P}_r^{\eps}$ into a neighbourhood of the TFN projected into the $(c_t,p,c)$ phase space for $V_{PLC}=0.149~\mu$M and $\eps=5 \times 10^{-5}$. The attracting invariant manifold of limit cycles (blue) is computed until it intersects the hyperplane $\Sigma: \{ c_t=c_t^*\}$. Similarly, the repelling invariant manifold of limit cycles (red) is computed up to its intersection with $\Sigma$. The inset shows $\mathcal{P}_a^{\eps} \cap \Sigma$ (blue) and $\mathcal{P}_r^{\eps}\cap \Sigma$ (red). Their singular limit counterparts, $\mathcal{P}_a \cap \Sigma$ and $\mathcal{P}_r \cap \Sigma$, are also shown for comparison. There are 13 intersections, $\xi_i$, $i=0,1,\ldots,12$, of the invariant manifolds, each corresponding to a maximal torus canard (with $\xi_{12}=:\xi_w$).}
\label{fig:PHinvariantmanifolds}
\end{figure}

Figure \ref{fig:PHinvariantmanifolds} demonstrates that the attracting and repelling manifolds of limit cycles twist in a neighbourhood of the TFN, and intersect a countable number of times. For $V_{PLC}=0.149~\mu$M, we find that there are 13 intersections, consistent with the prediction from Section \ref{subsec:PHtoralFS}. These intersections of $\mathcal{P}_a^{\eps}$ and $\mathcal{P}_r^{\eps}$ are the maximal torus canards (by Definition \ref{def:maximalTC}). The outermost intersection of $\mathcal{P}_a^{\eps}$ and $\mathcal{P}_r^{\eps}$, denoted $\xi_0$, is the maximal strong torus canard. The intersections, $\xi_i$, $i=1,\ldots, 11$, are the maximal secondary torus canards. The innermost intersection is the maximal weak torus canard, $\xi_w$.  

The maximal strong torus canard, $\xi_0$, is the local phase space separatrix that divides between rapidly oscillating solutions that exhibit amplitude modulation and those that do not. The maximal weak torus canard, $\xi_w$, plays the role of a local axis of rotation. That is, the invariant manifolds twist around $\xi_w$. The maximal secondary torus canards partition $\mathcal{P}_a^{\eps}$ and $\mathcal{P}_r^{\eps}$ into rotational sectors. Every orbit segment on $\mathcal{P}_a^{\eps}$ between $\xi_{n-1}$ and $\xi_{n}$ for $n=1,2,\ldots,13$, is an amplitude-modulated waveform where the envelope executes $n$ oscillations in a neighbourhood of the TFN.  

\begin{figure}[h!!]
\centering
\includegraphics[width=5in]{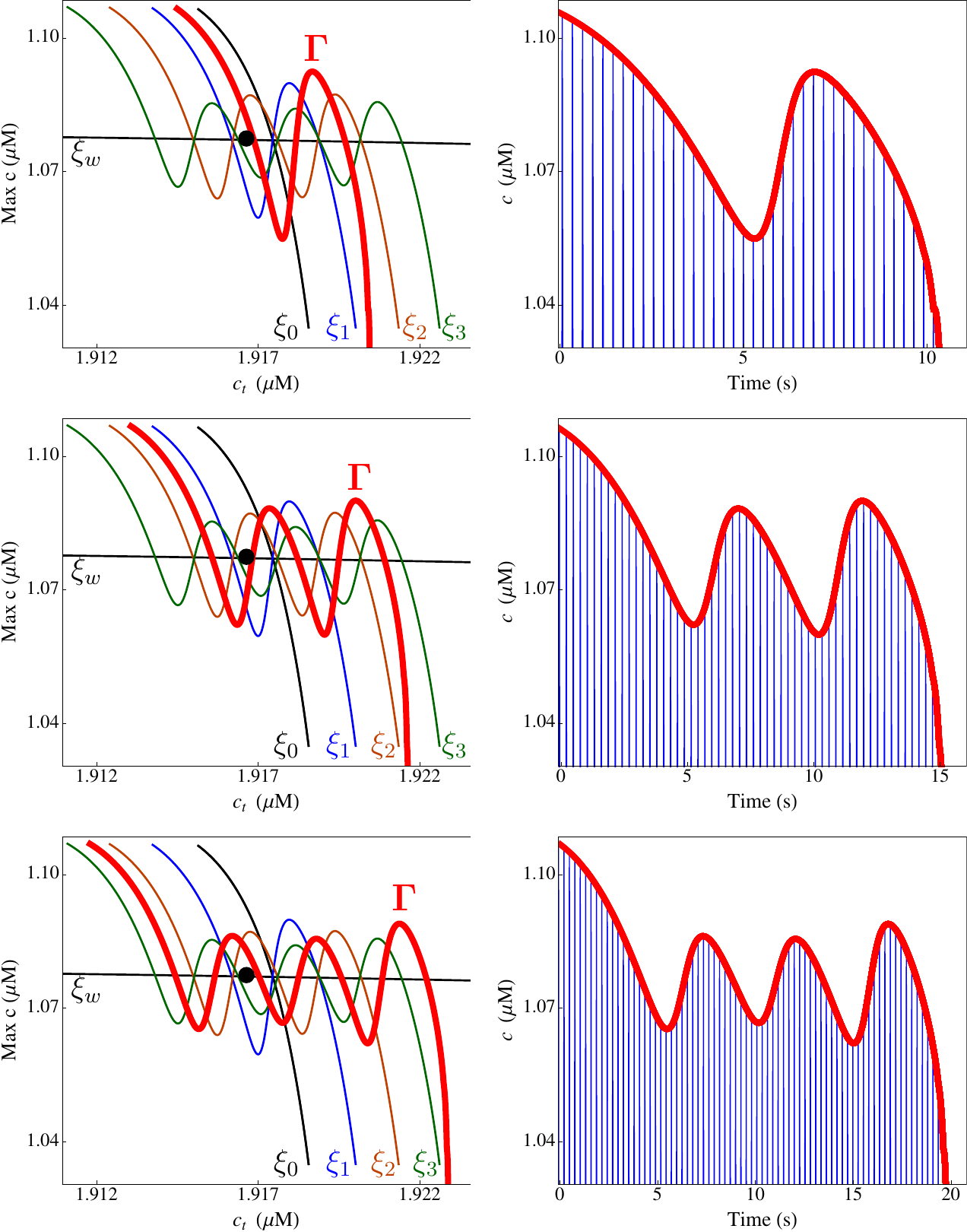}
\put(-364,452){(a)}
\put(-364,296){(b)}
\put(-364,140){(c)}
\caption{Sectors of amplitude modulation formed by the maximal torus canards for the PH model for $V_{PLC}=0.149~\mu$M and $\eps=5 \times 10^{-5}$. Left column: projection of $\mathcal{P}_a^{\eps}$ and $\mathcal{P}_r^{\eps}$, onto the $(c_t,c)$ plane along with the maximal torus canards $\xi_n, n=0,1,2,3,w$. Also shown is the envelope of the transient solution, $\Gamma$, of \eqref{eq:PHmodel} in the (a) sector bounded by $\xi_0$ and $\xi_1$, (b) sector bounded by $\xi_1$ and $\xi_2$, and (c) sector bounded by $\xi_2$ and $\xi_3$. Right column: corresponding time traces of the transient solution $\Gamma$. Note that time is given in seconds.}
\label{fig:PHsectors}
\end{figure}

Figure \ref{fig:PHsectors} illustrates the sectors of amplitude-modulation formed by the maximal torus canards. For fixed parameters, it is possible to change the number of oscillations in the envelope of the rapidly oscillating waveform by adjusting the initial condition. More specifically, the trajectory of \eqref{eq:PHmodel} for an initial condition on $\mathcal{P}_a^{\eps}$ between $\xi_0$ and $\xi_1$ is an AMB with one oscillation in the envelope (Figure \ref{fig:PHsectors}(a)). By changing the initial condition to lie in the rotational sector bounded by $\xi_1$ and $\xi_2$ (Figure \ref{fig:PHsectors}(b)), the amplitude-modulated waveform exhibits two oscillations in its envelope. The deeper into the funnel of the TFN, the smaller and more numerous the oscillations in the envelope of the rapidly oscillating waveform (Figure \ref{fig:PHsectors}(c)). Moreover, each oscillation significantly extends the burst duration.

\subsection{Numerical Computation of Invariant Manifolds of Limit Cycles} 	\label{subsec:PHnumerical}

Figure \ref{fig:PHinvariantmanifolds} is the first instance of the numerical computation of twisted, intersecting, invariant manifolds of limit cycles of a slow/fast system with at least two fast and two slow variables. Here, we outline the numerical method (inspired by the homotopic continuation algorithms for maximal canards of folded singularities \cite{Desroches2008,Desroches2010}) used to generate Figure \ref{fig:PHinvariantmanifolds}. 

The idea of the computation of $\mathcal{P}_a^{\eps}$ is to take a set of initial conditions on $\mathcal{P}_a$ sufficiently far from both the TFN and manifold of SNPOs, and flow it forward until the trajectories reach the hyperplane 
\[ \Sigma := \left\{ (c,r,c_t,p): c_t = c_t^* \right\}, \]
where $c_t^*$ is the $c_t$ coordinate of the TFN identified in Section \ref{subsec:PHtoralFS}. 
This generates a family of rapidly oscillating solutions that form a mesh of the manifold $\mathcal{P}_a^{\eps}$. The envelope of each of those rapidly oscillating solutions is then used to form a mesh of the projection of $\mathcal{P}_a^{\eps}$. 

\begin{figure}[h!!]
\centering
\includegraphics[width=3.25in]{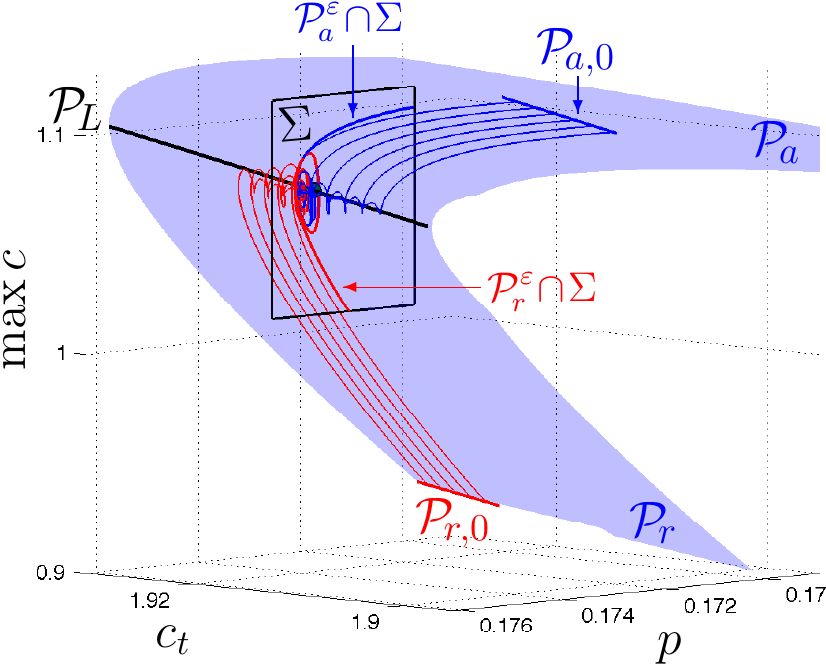}
\caption{Numerical computation of $\mathcal{P}_a^{\eps}$ and $\mathcal{P}_r^{\eps}$ for the PH model for the same parameter set as Figure \ref{fig:PHinvariantmanifolds}. The attracting invariant manifold of limit cycles, $\mathcal{P}_a^{\eps}$, is computed by taking the set $\mathcal{P}_{a,0} \subset \mathcal{P}_a$ and flowing it forward until it intersects the hyperplane $\Sigma$. The blue curves illustrate the behaviour of the envelopes of the rapidly oscillating orbit segments that comprise $\mathcal{P}_a^{\eps}$. Similarly, the repelling invariant manifold of limit cycles, $\mathcal{P}_r^{\eps}$, is computed by flowing $\mathcal{P}_{r,0} \subset \mathcal{P}_r$ backwards in time up to the hyperplane $\Sigma$. The red curves illustrate the envelopes of these rapidly oscillating orbit segments that comprise $\mathcal{P}_r^{\eps}$.}
\label{fig:PHnumericalmethod}
\end{figure}

To initialise the computation, a suitable set of initial conditions must be chosen. Note that the projection of $\mathcal{P}_L$ into the slow variable plane is a curve, $\mathcal{C}$, say (see Figure \ref{fig:PHtoralfn}(b)). We choose our initial conditions to be a manifold of attracting limit cycles, $\mathcal{P}_{a,0} \subset \mathcal{P}_a$, such that the projection of $\mathcal{P}_{a,0}$ into the $(c_t,p)$ plane is approximately parallel to $\mathcal{C}$, and is sufficiently far from $\mathcal{C}$ (Figure \ref{fig:PHnumericalmethod}).

Similarly, to compute $\mathcal{P}_r^{\eps}$, we initialize the computation by choosing a set of repelling limit cycles, $\mathcal{P}_{r,0} \subset \mathcal{P}_r$, where the projection of $\mathcal{P}_{r,0}$ into the $(c_t,p)$ plane is approximately parallel to, and sufficiently distant from, the curve $\mathcal{C}$. We then flow that set of initial conditions $\mathcal{P}_{r,0}$ backwards in time until the trajectory hits the hyperplane $\Sigma$. The envelopes of these rapidly oscillating trajectories are then used to visualize $\mathcal{P}_r^{\eps}$. 

To locate the maximal torus canard $\xi_n, n=0,1,2,\ldots, 12$, we locate the initial condition within $\mathcal{P}_{a,0}$ that forms the boundary between those orbit segments with $n$ oscillations in their envelope and orbit segments with $(n+1)$ oscillations in their envelope. 

We point out that whilst numerical methods exist for the computation and continuation of maximal canards of folded singularities \cite{Desroches2008}, these methods will not work for maximal torus canards. There are currently no existing methods to numerically continue $\mathcal{P}_a^{\eps}$ and $\mathcal{P}_r^{\eps}$. Consequently, there are no numerical methods that will allow for the numerical continuation of maximal torus canards in parameters, which is essential for detecting bifurcations of torus canards.

\subsection{Genericity of the Torus Canards in the Politi-H\"{o}fer Model} 	\label{subsec:PHtoralFSclass}

We have now carefully examined the torus canards associated to a TFN for a single parameter set. However, the PH model has two slow variables and so it supports TFNs on open parameter sets. The theoretical framework developed in Sections \ref{sec:averaging} and \ref{sec:classification} allows to determine how those TFNs and their associated torus canards depend on parameters. Figure \ref{fig:PHeigenvalues}(a) shows the eigenvalue ratio, $\mu$, of the toral folded singularity as a function of $V_{PLC}$, for instance.

\begin{figure}[h!!!t!]
\centering
\includegraphics[width=5in]{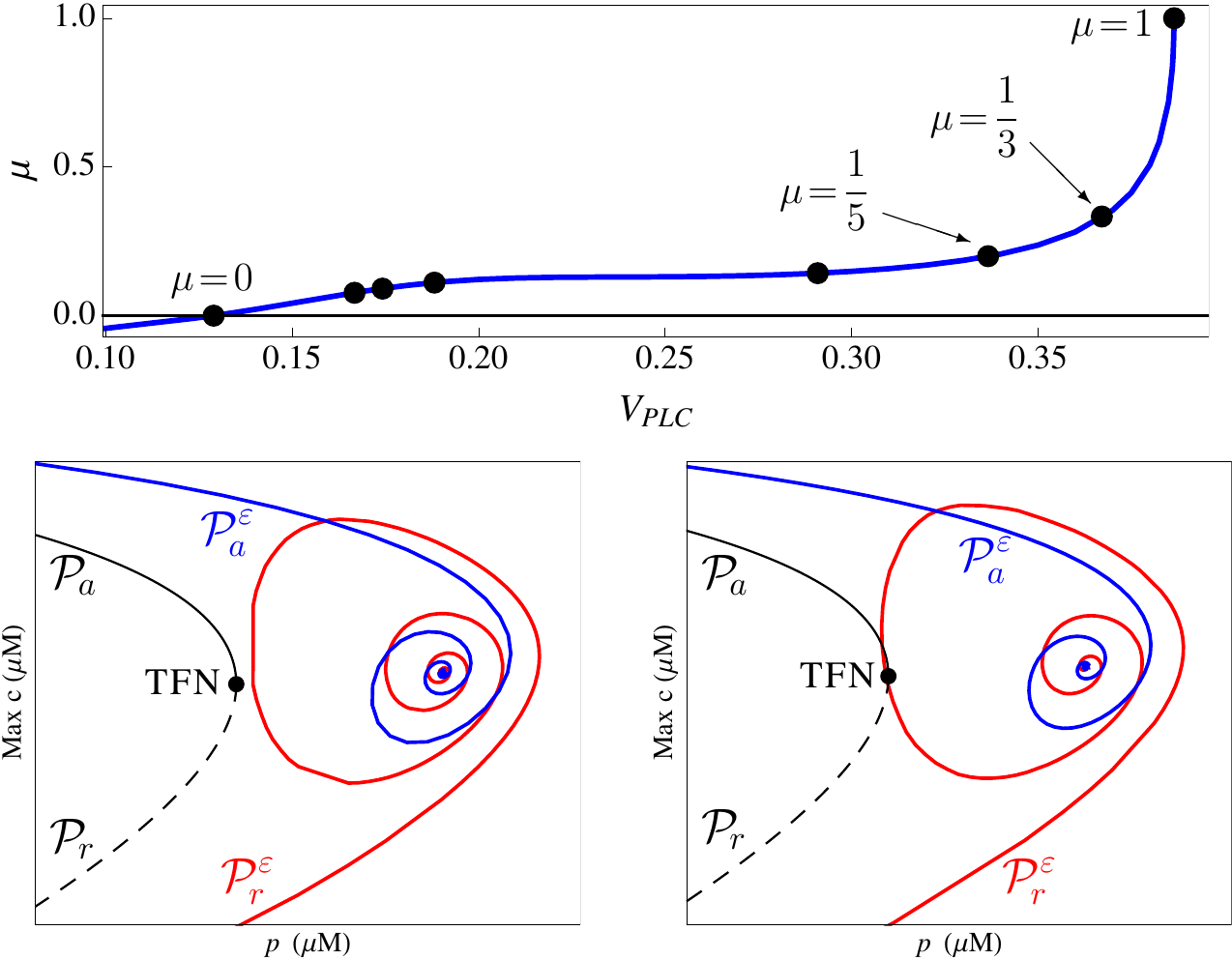}
\put(-360,274){(a)}
\put(-366,138){(b)}
\put(-180,138){(c)}
\caption{Dependence of the toral folded singularities and maximal torus canards on $V_{PLC}$. (a) The eigenvalue ratio, $\mu$, of the toral folded singularity as a function of $V_{PLC}$. The black markers indicate odd integer resonances in the eigenvalue ratio, where secondary torus canards bifurcate from the weak torus canard. There is a toral FSN II at $V_{PLC} \approx 0.129011~\mu$M. For $V_{PLC}<0.129011~\mu$M, the toral folded singularity is a toral folded saddle. Bottom row: $\mathcal{P}_a^{\eps}$ and $\mathcal{P}_r^{\eps}$, in a hyperplane passing through the TFN for $\eps = 5 \times 10^{-5}$ and (b) $V_{PLC} = 0.152~\mu$M where $s_{\max}=11$ and (c) $V_{PLC}=0.18~\mu$M where $s_{\max}=5$. In both cases, the invariant manifolds are shown in an $\mathcal{O}(\sqrt{\eps})$ neighbourhood of the TFN. Also shown are the attracting and repelling manifolds of limit cycles, $\mathcal{P}_a$ and $\mathcal{P}_r$, of the layer problem.}
\label{fig:PHeigenvalues}
\end{figure}

We find that the PH model has TFNs and hence torus canard dynamics for $0.129011~\mu$M $< V_{PLC} < 0.38642~\mu$M. 
The black markers in Figure \ref{fig:PHeigenvalues}(a) indicate odd integer resonances in the eigenvalue ratio, $\mu$, of the TFN. 
These resonances signal the creation of new secondary canards in the averaged radial-slow system. As such, when $\mu^{-1}$ increases through an odd integer, we expect additional torus canards to appear. Figures \ref{fig:PHeigenvalues}(b) and (c) illustrate the mechanism by which these additional torus canards appear. Namely, as $V_{PLC}$ decreases and $\mu^{-1}$ increases, the invariant manifolds of limit cycles become more and more twisted, resulting in additional intersections.  

Thus, Figure \ref{fig:PHeigenvalues}(a) essentially determines the number of torus canards that exist for a given parameter value. An alternative viewpoint is that Figure \ref{fig:PHeigenvalues}(a) determines the maximal number of oscillations that the envelope of the rapidly oscillating waveform can execute. For example, for $V_{PLC}$ on the interval between $\mu = 1$ and $\mu=\frac{1}{3}$, the amplitude-modulated waveform can have, at most, one oscillation in the envelope. For $V_{PLC}$ on the interval between $\mu= \frac{1}{3}$ and $\mu=\frac{1}{5}$, the amplitude-modulated waveform can have, at most, two oscillations in the envelope, and so on. 

The PH model supports other types of toral folded singularities. For $V_{PLC}<0.129011~\mu$M, system \eqref{eq:PHmodel} has toral folded saddles. 
The toral folded saddle has precisely one torus canard associated to it. This torus canard, however, has no rotational behaviour, and instead acts as a local phase space separatrix between trajectories that fall off the manifold of periodics at $\mathcal{P}_L$ and those that turn away from $\mathcal{P}_L$ and stay on $\mathcal{P}_a$. 
The other main type of toral folded singularity that can occur is the toral folded focus. In the PH model, we find a set of toral folded foci for $V_{PLC} > 0.38642~\mu$M. As stated in Section \ref{subsec:toralclass}, toral folded foci have no torus canard dynamics. 

The transition between TFN and toral folded saddle occurs at $V_{PLC} \approx 0.129011~\mu$M in a toral FSN of type II (corresponding to $\mu=0$), in which an ordinary singularity of the averaged radial-slow system coincides with the toral folded singularity. 

\section{Torus Canard-Induced Bursting Rhythms}	\label{sec:TCIMMO}

Having carefully examined the local oscillatory behaviour of the PH model due to TFNs and their associated torus canards, we proceed in this section to identify the local and global dynamic mechanisms responsible for the AMB rhythms. 
In Section \ref{subsec:AMB}, we study the effects of parameter variations on the AMB solutions. 
We then show in Section \ref{subsec:PHsingTC} that the AMBs are torus canard-induced mixed-mode oscillations. 
In Section \ref{subsec:PHsingvsnonsing}, we examine where the spiking, bursting, and AMB rhythms exist in parameter space. 
In so doing, we demonstrate the origin of the AMB rhythm and show how it varies in parameters.

\subsection{Amplitude-Modulated Bursting in the Politi-H\"{o}fer Model} 	\label{subsec:AMB}

In Section \ref{subsec:PHbifn}, we reported on the existence of AMB solutions in the PH model (see Figure \ref{fig:PHbifn}(d)). The novel features of these AMBs are the oscillations in the envelope of the rapidly oscillating waveform during the active phase, which significantly extend the burst duration. 

Changes in the parameter $V_{PLC}$ have a measurable effect on the amplitude modulation in these AMB rhythms. Increasing $V_{PLC}$ causes a decrease in the number of oscillations that the profile of the waveform exhibits. That is, as $V_{PLC}$ increases, the envelope of the bursting waveform gradually loses oscillations and the burst duration decreases. This progressive loss of oscillations in the envelope continues until $V_{PLC}$ has been increased sufficiently that all of the small oscillations disappear. For instance, for $V_{PLC} = 0.1489~\mu$M, we observe 5 oscillations in the envelope (Figure \ref{fig:PHampmod}(a)). This decreases to 4 oscillations for $V_{PLC}=0.149~\mu$M (Figure \ref{fig:PHbifn}(d)), down to 3 for $V_{PLC}=0.1492~\mu$M (Figure \ref{fig:PHampmod}(b)), and then to 2 for $V_{PLC}=0.1495~\mu$M (Figure \ref{fig:PHampmod}(c)). Further increases in $V_{PLC}$ result in just 1 oscillation in the envelope (not shown) until, for sufficiently large $V_{PLC}$, the oscillations disappear. 
Once the amplitude modulation disappears (Figure \ref{fig:PHampmod}(d)), the trajectories resemble the elliptic bursting rhythms discussed previously. 

\begin{figure}[ht!]
\centering
\includegraphics[width=5in]{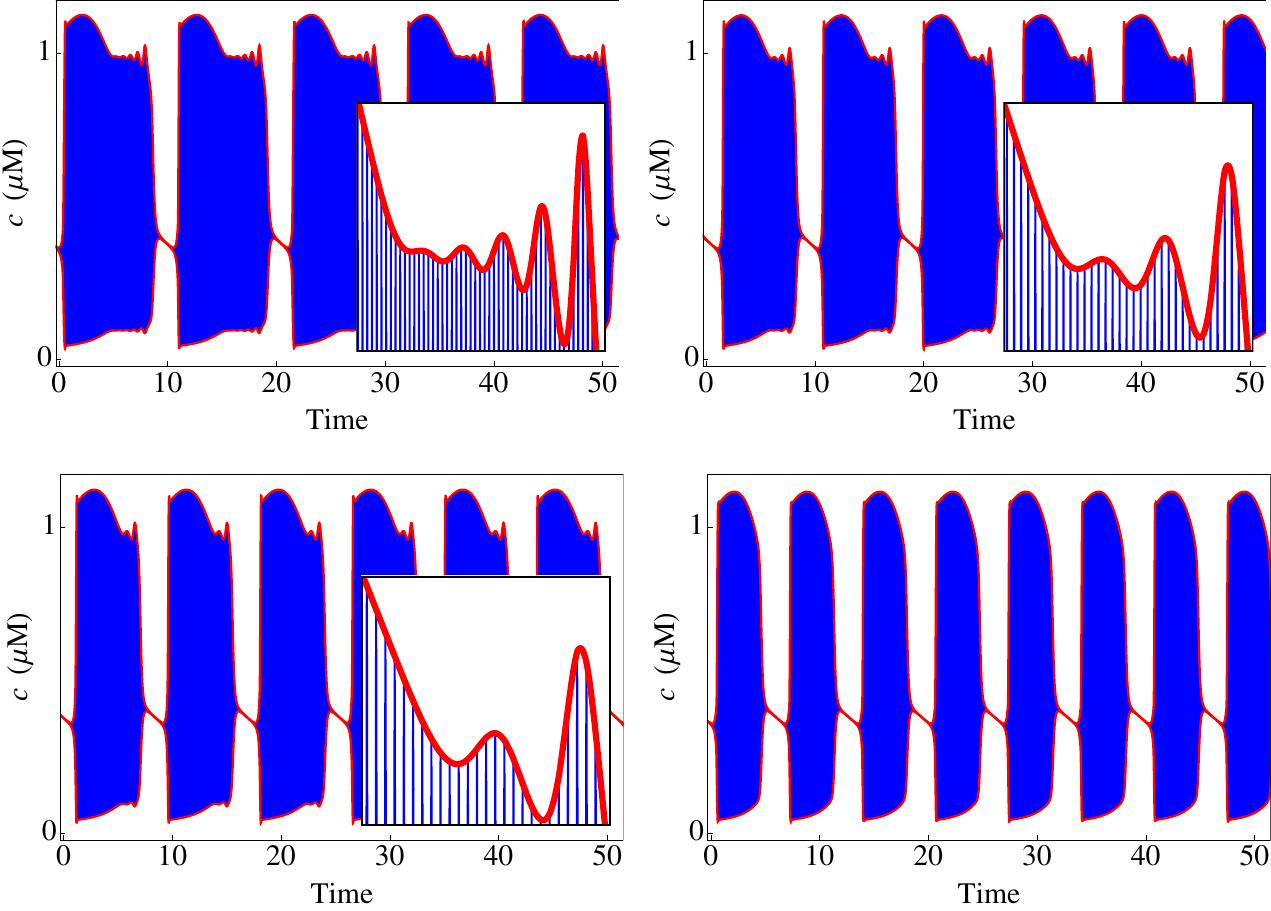}
\put(-365,252){(a)}
\put(-180,252){(b)}
\put(-364,116){(c)}
\put(-180,116){(d)}
\caption{Amplitude-modulated bursting rhythms of system \eqref{eq:PHmodel} for (a) $V_{PLC}=0.1489~\mu$M, (b) $V_{PLC}=0.1492~\mu$M, (c) $V_{PLC}=0.1495~\mu$M, and (d) $V_{PLC}=0.1510~\mu$M. Increasing $V_{PLC}$ decreases the number of oscillations that the envelope of the waveform executes, and consequently decreases the burst duration.}
\label{fig:PHampmod}
\end{figure}

It is currently unknown what kinds of bifurcations, if any, occur in the transitions between AMB waveforms with different numbers of oscillations in the envelope. We conjecture that these transitions occur via torus doubling bifurcations (since they are associated with TFNs; see Section \ref{subsec:PHsingTC}). Further investigation of the bifurcations that organise these transitions is beyond the scope of the current article.

\subsection{Origin of the Amplitude-Modulated Bursting} 	\label{subsec:PHsingTC}

We first concentrate on understanding the mechanisms that generate the amplitude-modualted bursting rhythm seen in Figure \ref{fig:PHbifn}(d), corresponding to $V_{PLC} = 0.149~\mu$M. To do this, we construct the singular attractor of the PH model for $V_{PLC} = 0.149~\mu$M (Figures \ref{fig:PHtoralmmo}(a) and (b)). The singular attractor is the concatenation of four orbit segments. Starting in the silent phase of the burst, there is a slow drift (black, single arrow) along the critical manifold $\mathcal{S}_a$ that takes the orbit up to the curve $\mathcal{H}$ of Hopf bifurcations, where the stability of $\mathcal{S}$ changes. 
This initiates a fast upward transition (black, double arrows) away from $\mathcal{H}$ towards the attracting manifold of limit cycles, $\mathcal{P}_a$. 
Once the trajectory reaches $\mathcal{P}_a$, there is a net slow drift (black, single arrow) that moves the orbit segment along $\mathcal{P}_a$ towards $\mathcal{P}_L$. This net slow drift along $\mathcal{P}_a$ can be described by an appropriate averaged system (Theorem \ref{thm:averaging}). 
We find that for $V_{PLC}=0.149~\mu$M, the fast up-jump from $\mathcal{H}$ to $\mathcal{P}_a$ projects the trajectory into the funnel of the TFN. As such, the slow drift brings the trajectory to the TFN itself (green marker). At the TFN, there is a fast downward transition (black, double arrows) that projects the trajectory down to the attracting sheet of the critical manifold, thus completing one cycle. 

\begin{figure}[h!]
\centering
\includegraphics[width=5in]{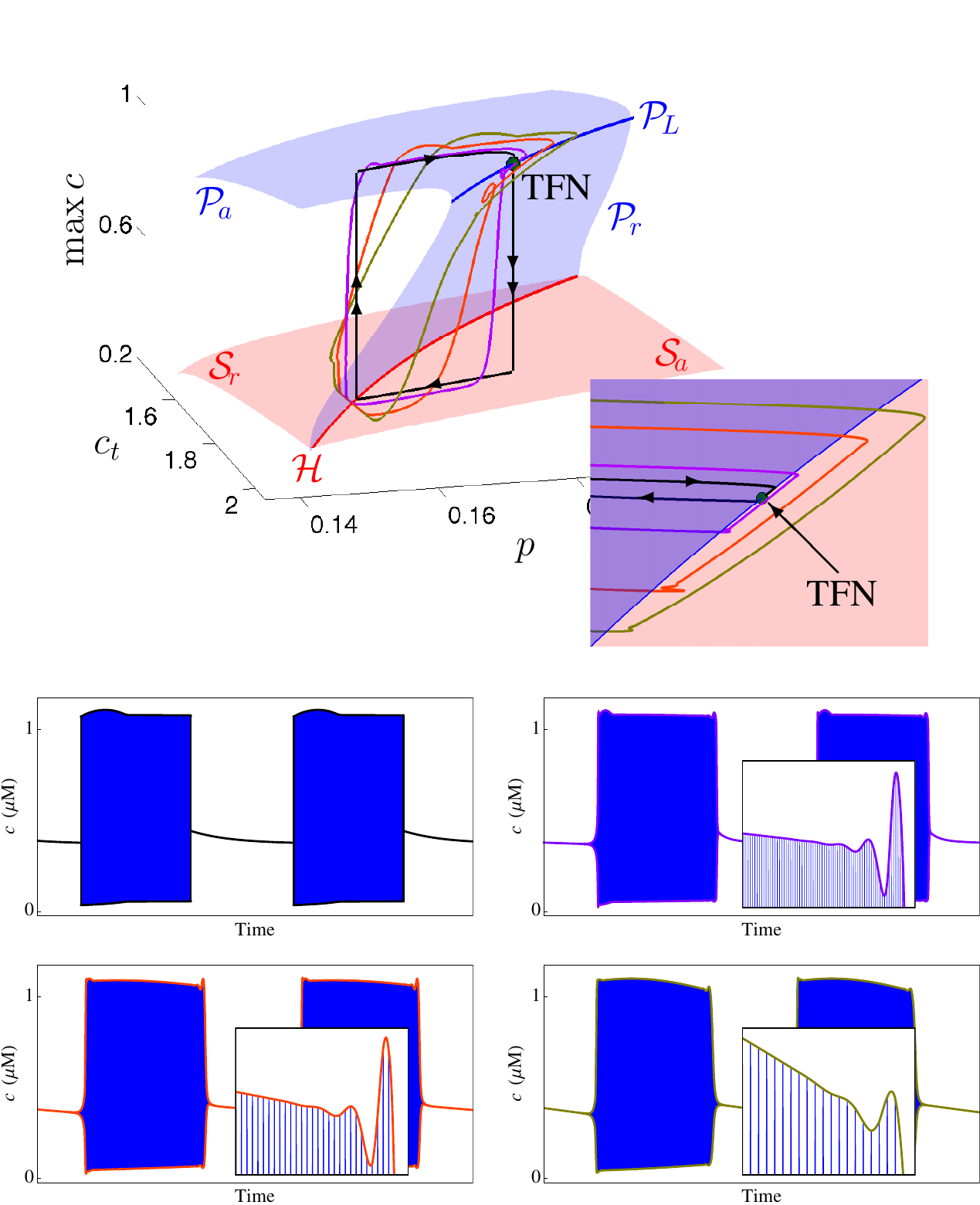}
\put(-340,406){(a)}
\put(-364,180){(b)}
\put(-180,180){(c)}
\put(-364,81){(d)}
\put(-180,81){(e)}
\caption{Torus canard-induced mixed-mode oscillations in the PH model for $V_{PLC}=0.149~\mu$M and $\eps=0$ (black), $\eps=0.0001$ (purple), $\eps=0.0005$ (orange), and $\eps=0.001$ (olive). (a) Trajectories superimposed on the critical manifold, $\mathcal{S}_a \cup \mathcal{H} \cup \mathcal{S}_r$, and manifold of limit cycles, $\mathcal{P}_a \cup \mathcal{P}_L \cup \mathcal{P}_r$. The singular attractor alternates between slow epochs (single arrows) on $\mathcal{S}_a$ and $\mathcal{P}_a$, with fast jumps (double arrows) between them. The $\eps$-unfoldings of the singular attractor (coloured trajectories) spend long times near the TFN (green marker). Inset: projection into the slow variable plane in an $\mathcal{O}(\sqrt{\eps})$ neighbourhood of the TFN. The associated time traces of \eqref{eq:PHmodel} are shown in (b) for $\eps=0$, (c) for $\eps=0.0001$, (d) for $\eps=0.0005$, and (e) for $\eps=0.001$. The coloured envelopes in (b)--(e) correspond to the coloured trajectories in (a).}
\label{fig:PHtoralmmo}
\end{figure}

Figure \ref{fig:PHtoralmmo} shows that the singular attractor perturbs to the AMB rhythm for sufficiently small $\eps$ (purple, orange, and olive trajectories). 
That is, for small non-zero perturbations, the silent phase of the orbit is a small $\mathcal{O}(\eps)$-perturbation of the slow drift on the critical manifold. Note that the trajectory does not immediately leave the silent phase when it reaches the Hopf curve. Dynamic bifurcation theory shows that the initial exponential contraction along $\mathcal{S}_a$ allows trajectories to follow the repelling slow manifold for $\mathcal{O}(1)$ times on the slow time-scale \cite{Neishtadt1987,Neishtadt1988}. However, there eventually comes a moment where the repulsion on $\mathcal{S}_r$ overwhelms the accumulative contraction on $\mathcal{S}_a$ and the trajectory jumps away to the invariant manifold of limit cycles $\mathcal{P}_a^{\eps}$. We have established that for the segments of $\mathcal{P}_a$ that are an $\mathcal{O}(1)$-distance from $\mathcal{P}_L$, the slow drift along $\mathcal{P}_a^{\eps}$ is a smooth $\mathcal{O}(\eps)$ perturbation of the averaged slow flow along $\mathcal{P}_a$.  

In a neighbourhood of the manifold of SNPOs, and the TFN in particular, we have shown in Section \ref{subsec:PHmanifolds} that torus canards are the local phase space mechanisms responsible for oscillations in the envelope of the rapidly oscillating waveform. These oscillations are restricted to an $\mathcal{O}(\sqrt{\eps})$-neighbourhood of the TFN (Figure \ref{fig:PHtoralmmo}(a), inset). Note that as $\eps$ increases the position of the AMB trajectory changes relative to the maximal torus canards. 
For instance, the purple and orange trajectories in Figure \ref{fig:PHtoralmmo} are closer to one of the maximal torus canards than the olive trajectory. In fact, the olive solution lies in a different rotational sector than the purple and orange solutions and hence has fewer oscillations.

Thus, the AMB consists of a local mechanism (torus canard dynamics due to the TFN) and a global mechanism (the slow passage of the trajectory through a delayed Hopf bifurcation, which re-injects the orbit into the funnel of the TFN). Consequently, the AMB can be regarded as a \emph{torus canard-induced mixed-mode oscillation}.

\begin{remark}
The PH model supports canard-induced mixed-mode dynamics \cite{Brons2006}, since it has a cubic-shaped critical manifold with folded singularities. In fact, careful analyses of the canard-induced mixed-mode oscillations in \eqref{eq:PHmodel} were performed in \cite{Harvey2011}. The difference between our work and \cite{Harvey2011} is that we are concentrating on the AMB behaviour near the torus bifurcation TR$_3$ (see Figure \ref{fig:PHbifn}(e)), whereas \cite{Harvey2011} focuses on the mixed-mode oscillations near HB$_1$.
\end{remark}

We have demonstrated the origin of the AMB rhythm for the specific parameter value $V_{PLC} = 0.149~\mu$M. The other AMB rhythms observed in the PH model (such as in Figure \ref{fig:PHampmod}) can also be shown to be torus canard-induced mixed-mode oscillations. The number of oscillations that the envelopes of the AMBs exhibit is determined by two key diagnostics: the eigenvalue ratio of the TFN which determines how many maximal torus canards exist, and the global return mechanism (slow passage through the delayed Hopf) which determines how many oscillations are actually observed.

Whilst we have carefully studied the local mechanism in Sections \ref{subsec:PHtoralFS}--\ref{subsec:PHtoralFSclass} and identified the global return mechanism, we have not performed any careful analysis of the global return or its dependence on parameters. In particular, the boundary $d=0$, corresponding to the special scenario in which the singular trajectory is re-injected exactly on the singular strong torus canard marks the boundary between those trajectories that reach the TFN (and exhibit torus canard dynamics) and those that simply reach the manifold of SNPOs and fall off without any oscillations in the envelope. Furthermore, the global return is able to generate or lose oscillations in the envelope by re-injecting orbits into the different rotational sectors formed by the maximal torus canards. We leave the investigation of the global return for these AMBs to future work. 

\begin{remark}
The bursting and spiking rhythms shown in Figures \ref{fig:PHbifn}(b) and (c), respectively, can also be understood in terms of the bifurcation structure of the layer problem \eqref{eq:PHlayer}. In the bursting case, the trajectory can be decomposed into four distinct segments, analogous to the AMB rhythm. The only difference is that the bursting orbit encounters $\mathcal{P}_L$ at a regular folded limit cycle instead of a TFN, and so it simply falls off $\mathcal{P}$ without exhibiting torus canard dynamics.  Such a bursting solution, with active phase initiated by slow passage through a fast subsystem subcritical Hopf bifurcation, and active phase terminated at an SNPO, is known as a subcritical elliptic burster \cite{Izhikevich2000}.
Note that in the subcritical elliptic bursting (Figure \ref{fig:PHbifn}(b)) and AMB (Figure \ref{fig:PHbifn}(d)) cases, the averaged radial-slow flow possesses a repelling ordinary singularity (i.e., unstable spiking solutions). 

In the tonic spiking case, the trajectory of \eqref{eq:PHmodel} can be understood by locating ordinary singularities of the averaged radial-slow system \eqref{eq:Ruslowfast}. We find that the averaged radial-slow system of the PH model has an attracting ordinary singularity, 
which corresponds to a stable limit cycle $\Gamma$ of the layer problem \eqref{eq:PHlayer}. Since $\Gamma$ is hyperbolic, the full PH model exhibits periodic solutions which are $\mathcal{O}(\eps)$ perturbations of the normally hyperbolic limit cycle $\Gamma$ for sufficiently small $\eps$. In this spiking regime, the system possesses a toral folded saddle (corresponding to the region of negative $\mu$ in Figure \ref{fig:PHeigenvalues}(a)). 
\end{remark}

\subsection{Toral FSN II and the Amplitude-Modulated Spiking/Bursting Boundary}	 	\label{subsec:PHsingvsnonsing}

We are interested in the transitions between the different dynamic regimes (spiking, bursting, and AMB) of \eqref{eq:PHmodel}. Figure \ref{fig:PHTR}(a) shows the two-parameter bifurcation structure of \eqref{eq:PHmodel} in the $(V_{PLC},\eps)$ plane. Continuation of the torus bifurcations TR$_3$ and TR$_4$ (from Figure \ref{fig:PHbifn}(e)) generates a single curve, which separates the spiking and bursting regimes. The region enclosed by the TR$_3$/TR$_4$ curve consists of subcritical elliptic bursting solutions (including the AMB). By similarly continuing the Hopf bifurcation HB$_2$, we find that the spiking regime is the region bounded by the HB$_2$ curve and the curve of torus bifurcations. Note that the branch TR$_3$, which separates the rapid spiking and AMB waveforms, converges to the toral FSN II at $V_{PLC} \approx 0.129011~\mu$M in the singular limit $\eps \to 0$. This supports our conjecture from Section \ref{subsec:toralFSN} that the $\eps$-unfolding of the toral FSN II is a singular torus bifurcation. 

\begin{figure}[ht!]
\centering
\includegraphics[width=5in]{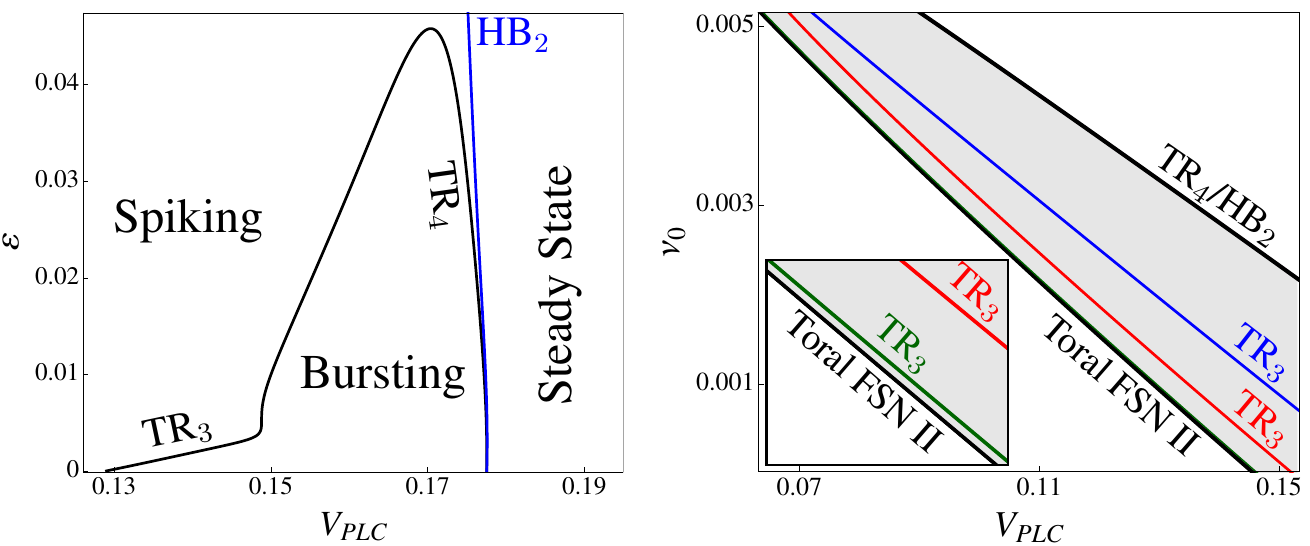}
\put(-364,140){(a)}
\put(-184,140){(b)}
\caption{Two-parameter bifurcation structure of the PH model obtained by following the torus bifurcations, TR$_3$ and TR$_4$, and the Hopf bifurcation, HB$_2$, from Figure \ref{fig:PHbifn}(e) in (a) the $(V_{PLC},\eps)$ plane, and (b) the $(V_{PLC},\nu_0)$ plane. (a) The torus bifurcation curve encloses the bursting region. In the limit as $\eps \to 0$, TR$_3$ converges to the toral FSN II at $V_{PLC}\approx 0.129011~\mu$M. The region between the Hopf curve and the torus curve is the spiking region. (b) In the singular limit $\eps \to 0$, the bursting region is enclosed by the toral FSN II and TR$_4$ curves. 
The TR$_3$ curve unfolds (in $\eps$) from the toral FSN II curve. Thus, for $0 < \eps \ll 1$, the AMB solutions exhibit more oscillations in their envelopes the closer the parameters are chosen to the TR$_3$ boundary.}
\label{fig:PHTR}
\end{figure}

We provide further numerical evidence to support this conjecture in Figure \ref{fig:PHTR}(b), where we compare the loci of the toral FSNs of type II and torus bifurcation TR$_3$ in the $(V_{PLC},\nu_0)$ parameter plane for various $\eps$. 
The coloured curves correspond to the torus bifurcation TR$_3$ for $\eps = 0.0035$ (blue), $\eps = 0.001$ (red), and $\eps = 0.0001$ (green; inset). 
As demonstrated in Figure \ref{fig:PHTR}(b), the TR$_3$ curve converges to the toral FSN II curve in the singular limit. 
Also shown are the TR$_4$ and HB$_2$ curves, which remain close to each other in the $(V_{PLC},\nu_0)$ plane and enclose a very thin wedge of spiking solutions in the parameter space. 

Thus, in the singular limit, the bursting region is enclosed by the toral FSN II and TR$_4$ curves (Figure \ref{fig:PHTR}(b); shaded region). 
Moreover, the curve TR$_3$ of torus bifurcations that unfolds from the toral FSN II curve forms the boundary between AMB and amplitude-modulated spiking rhythms. 
That is, the closer the parameters are to the TR$_3$ curve, the more oscillations in the envelope of the AMB trajectories and hence the longer the burst duration. Similarly, the spiking solutions that exist near the toral FSN II curve exhibit amplitude modulation. 

\begin{remark}
The numerical computation and continuation of the curve of toral FSNs of type II requires careful numerics; it requires solutions of a periodic boundary value problem subject to phase and integral conditions. We outline the procedure in Appendix \ref{app:TFScurve}. 
\end{remark}

We saw from Figure \ref{fig:PHeigenvalues}(a) that the PH model supports TFN-type torus canards for $0.129011~\mu$M $< V_{PLC} < 0.38642~\mu$M. Figure \ref{fig:PHTR}, however, shows that the amplitude modulated bursting exists on a more restricted interval of $V_{PLC}$. This indicates the importance of the global return mechanism in shaping the outcome of the torus canard-induced mixed-mode dynamics. The bifurcations that separate the different amplitude-modulated waveforms are currently unknown and left to future work.

\section{Torus Canards In $\mathbb{R}^3$ \& The Spiking/Bursting Transition}		\label{sec:explosion}

Having established the predictive power of our analysis of torus canards in $\mathbb{R}^4$, we now examine the connection between our analysis and prior work on torus canards in $\mathbb{R}^3$, namely in 2-fast/1-slow systems. 
In Section \ref{subsec:TCexpmodels}, we carefully study the transition from tonic spiking to bursting via amplitude-modulated spiking in the Morris-Lecar-Terman system for neural bursting. We show that the boundary between spiking and bursting is given by the toral folded singularity of the system. 
We further demonstrate the power of our theoretical framework by tracking the toral folded singularities in the Hindmarsh-Rose (Section \ref{subsec:TCHR}) and Wilson-Cowan-Izhikevich (Section \ref{subsec:TCWCI}) models. 
We note that the analysis in this section relies on the results in Section \ref{sec:arbitrarydimensions}, namely Theorem \ref{thm:averagingarbitraryslow}, which extends the averaging method for folded manifolds of limit cycles to slow/fast systems with two fast variables and an arbitrary number of slow variables. 

\begin{remark}
Our theoretical framework also allows one to determine the parameter values for which a torus canard explosion occurs in 2-fast/1-slow systems (Theorems \ref{thm:3Daveraging} and \ref{cor:explosion}). This predictive power is illustrated in Appendix \ref{app:TCexplosion} in the case of the forced van der Pol equation. 
\end{remark}

\subsection{Torus Canards in the Morris-Lecar-Terman Model} 	\label{subsec:TCexpmodels}

The Morris-Lecar-Terman (MLT) model \cite{Burke2012,Terman1991} is an extension of the planar Morris-Lecar model for neural excitability in which the constant applied current is replaced with a linear feedback control, $y$. The (dimensionless) model equations are
\begin{equation}	\label{eq:MLT}
\begin{split}
\dot{v} &= y- g_L (V-V_L) - g_K w (V-V_K) -g_{Ca} m_{\infty} (V) (V-V_{Ca}), \\
\dot{w} &= \frac{w_{\infty}(v)-w}{\tau_w (v)}, \\
\dot{y} &= \eps \left( k-V \right),
\end{split}
\end{equation}
where $v$ is the (dimensionless) voltage, $w$ is the recovery variable, and $y$ is the (dimensionless) applied current. The steady-state activation functions are given by 
\[ m_{\infty}(v) = \frac{1}{2} \left( 1+\tanh\left( \frac{v-c_1}{c_2} \right) \right), \quad w_{\infty}(v) = \frac{1}{2} \left( 1+\tanh\left( \frac{v-c_3}{c_4} \right) \right), \]
and the voltage-dependent time-scale, $\tau_w$, of the recovery variable $w$ is 
\[ \tau_w(v) = \tau_0 \operatorname{sech} \left( \frac{v-c_3}{2c_4} \right). \]
Following \cite{Burke2012}, we treat $k$ and $g_{Ca}$ as the principal control parameters, and fix all other parameters at the standard values listed in Table \ref{tab:MLT}.

\begin{table}[h!]
\centering
\begin{tabular}{|c|c|c|c|c|c|c|c|c|c|}
\hline
Param. & Value & Param. & Value & Param. & Value & Param. & Value & Param. & Value \\
\hline
$g_L$ & $0.5$ & $g_K$ & $2$ & $V_L$ & $-0.5$ & $V_K$ & $-0.7$ & $V_{Ca}$ & $1.0$  \\
$c_1$ & $-0.01$ & $c_2$ & $0.15$ & $c_3$ & $0.1$ & $c_4$ & $0.16$ & $\tau_0$ & $3$ \\
$\eps$ & $0.001$ & \textcolor{white}{1} & \textcolor{white}{1} & \textcolor{white}{1} & \textcolor{white}{1} & \textcolor{white}{1} & \textcolor{white}{1} & \textcolor{white}{1} & \textcolor{white}{1}\\
\hline
\end{tabular}
\caption{Standard parameter set for the MLT model \eqref{eq:MLT}.}
\label{tab:MLT}
\end{table}

System \eqref{eq:MLT} is a slow/fast system, with fast variables $(v,w)$ and slow variable $y$. The MLT model has been shown to exhibit a wide variety of bursting dynamics, such as fold/homoclinic bursting \cite{Terman1991} and circle/fold-cycle bursting \cite{Izhikevich2000}. Moreover, the transition from spiking to (circle/fold-cycle) bursting has been shown to be mediated by a torus canard explosion \cite{Burke2012}. Here, we take a novel approach to the study of \eqref{eq:MLT} as follows. First, we examine the bifurcation structures of both the MLT model and its associated layer problem. We review how the spiking and bursting rhythms can be understood in terms of the underlying geometry. We then identify toral FSNs of type II and compute their loci in parameter space. We demonstrate that the $\eps$ unfolding of the toral FSN II is the torus bifurcation that mediates the spiking/bursting transition for $0< \eps \ll 1$.

\subsubsection{Bifurcation Structure} 	\label{subsubsec:MLTbifn}

System \eqref{eq:MLT} can generate tonic spiking (Figure \ref{fig:mltfullsystem}, top row) or bursting (Figure \ref{fig:mltfullsystem}, middle row), depending on parameters. A useful way to describe and understand these rhythms is via the bifurcation structure of the layer problem of \eqref{eq:MLT} with respect to the slow variable $y$ \cite{Rinzel1998,Rubin2000}. The layer problem has a cubic shaped critical manifold, $\mathcal{S}$. The upper attracting branch of $\mathcal{S}$ is joined to the middle repelling branch via a subcritical Hopf bifurcation. Emanating from the subcritical Hopf is a branch of repelling limit cycles, $\mathcal{P}_r$, which meets an attracting family of limit cycles, $\mathcal{P}_a$, at an SNPO, $\mathcal{P}_L$. The attracting branch $\mathcal{P}_a$ terminates at a saddle-node on invariant circle (SNIC) bifurcation.

\begin{figure}[h!]
\centering
\includegraphics[width=5in]{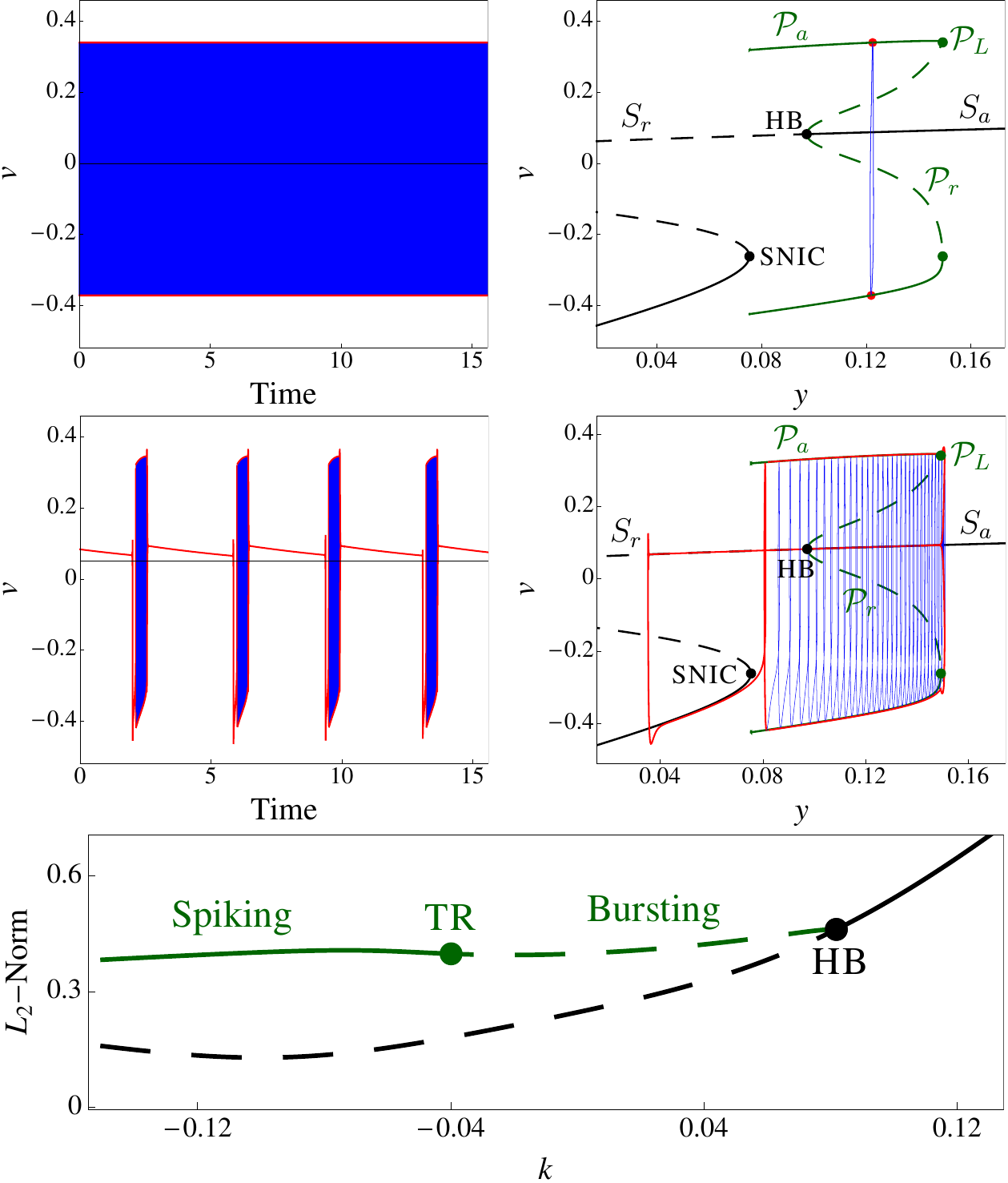}
\put(-362,416){(a)}
\put(-180,416){(b)}
\put(-362,266){(c)}
\put(-180,266){(d)}
\put(-362,116){(e)}
\caption{Spiking and bursting rhythms of the MLT model \eqref{eq:MLT} for $\eps=0.001, g_{Ca}=1.25$, and $k=-0.12$ (top row) and $k=0.04$ (middle row). The left column shows the time traces and the right column shows the trajectory superimposed on the bifurcation structure of the layer problem. In the spiking case, the full system spiking attractor stays in an $\mathcal{O}(\eps)$-neighbourhood of $\mathcal{P}_a$. The bursting attractor consists of alternating segments of slow drift along $\mathcal{P}_a$ and $\mathcal{S}$, with fast jumps between. The bifurcation structure of \eqref{eq:MLT} with respect to $k$ is summarized in (e). The spiking to bursting transition is mediated by a torus bifurcation (TR).}
\label{fig:mltfullsystem}
\end{figure}

Figure \ref{fig:mltfullsystem}(b) shows the spiking attractor superimposed on the bifurcation structure of the layer problem of \eqref{eq:MLT} with respect to $y$. The spiking rhythm corresponds to a stable equilibrium of the averaged radial-slow system. Similarly, Figure \ref{fig:mltfullsystem}(d) shows the bursting rhythm projected into the $(y,v)$ plane. In this case, the active burst phase is the result of the slow drift of the orbit along $\mathcal{P}_a$, until it reaches $\mathcal{P}_L$, where it falls off and returns to $\mathcal{S}_a$. The slow drift along the critical manifold then allows the orbit to drift past the Hopf bifurcation until the instability of $\mathcal{S}$ repels it. The trajectory then jumps to the lower attracting branch of $\mathcal{S}$, where the slow drift brings it to a neighbourhood of the SNIC, where it is repulsed and begins following $\mathcal{P}_a$ once again. 

The bifurcation structure of the full MLT model is summarized in Figure \ref{fig:mltfullsystem}(e). For sufficiently large $k$, the system is in a stable depolarized state. The quiescent state becomes unstable in a supercritical Hopf bifurcation. The stable limit cycles rapidly destabilize in a torus bifurcation (not shown), resulting in bursting solutions. As $k$ is decreased, another torus bifurcation is encountered and the bursting solutions become spiking trajectories. Note that at an $\mathcal{O}(\eps)$ distance from the torus bifurcation (on the spiking side), the MLT system is in an amplitude-modulated spiking state (not shown).

\subsubsection{Toral Folded Singularities} 	\label{subsubsec:MLTtoralFS}

We will show in Section \ref{sec:arbitrarydimensions} (Theorem \ref{thm:averagingarbitraryslow}) that a toral folded singularity in the case of one slow variable is a folded limit cycle, $\Gamma=(v_{\Gamma},w_{\Gamma})$, of the layer problem of \eqref{eq:MLT} such that 
\[ \overline{g} := \frac{1}{T(y)} \int_0^{T(y)} g(v_{\Gamma},w_{\Gamma},y)\, dt =0, \]
where $g(v,w,y)$ is the slow component of the vector field, and $T(y)$ is the period of $\Gamma$. For system \eqref{eq:MLT}, we have $g(v,w,y)=k-v$, and the condition of stationary average slow drift simplifies to
\[ \overline{v} := \frac{1}{T(y)} \int_0^{T(y)} v_{\Gamma}\, dt = k. \]
Thus, we interpret the toral folded singularity in this case as the intersection of the averaged slow nullcline $\{ \overline{g}=0 \}$ and the SNPO in the $(y,\overline{v})$ phase plane. Figure \ref{fig:mlttransition} shows the progression as the averaged slow nullcline increases through the SNPO. 

\begin{figure}[h!]
\centering
\includegraphics[width=5in]{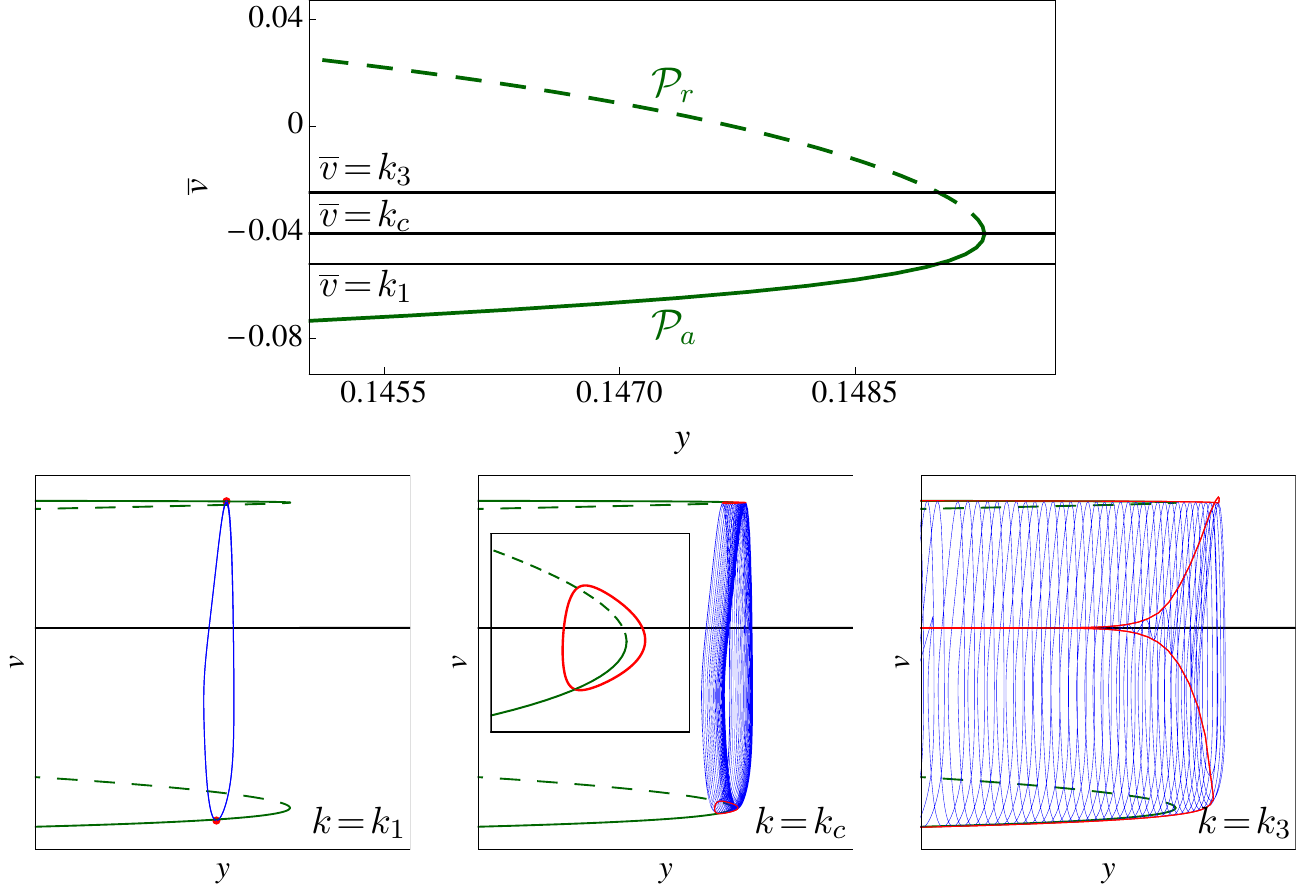}
\put(-314,236){(a)}
\put(-368,105){(b)}
\put(-244,105){(c)}
\put(-122,105){(d)}
\caption{Transition from spiking to bursting for $g_{Ca}=1.25$.
(a) Projection of $\mathcal{P}$ and the averaged slow nullcline $\{ \overline{v} = k \}$ into the $(y,\overline{v})$ plane for three different values of $k$. (b) For $k<k_c$, the averaged slow nullcline intersects $\mathcal{P}_a$, and the corresponding full system trajectory is a spiking rhythm. (c) For $k=k_c$, the averaged slow nullcline intersects the SNPO and the averaged radial-slow system has a canard point. The corresponding full system trajectory is an amplitude-modulated spiking rhythm. Inset: canard cycle of the envelope. (d) For $k>k_c$, the averaged slow nullcline intersects $\mathcal{P}_r$ and the full system trajectory is a (circle/fold-cycle) burst.}
\label{fig:mlttransition}
\end{figure}

For $k< k_c \approx -0.0405$, the averaged slow nullcline intersects $\mathcal{P}_a$. Theorem \ref{thm:averagingarbitraryslow} shows that the intersection $\{ \overline{g}=0 \} \cap \mathcal{P}_a$ is a normally hyperbolic ordinary singularity of the averaged radial-slow system. Thus, the ordinary singularity persists as an equilibrium of the fully perturbed averaged radial-slow flow. This corresponds to a stable spiking solution of the MLT model (Figure \ref{fig:mlttransition}(b)). 

When $k=k_c$, the averaged slow nullcline intersects the SNPO. In that case, the averaged radial-slow system possesses a canard point. Note that this is a toral FSN II since it is the intersection of a toral folded singularity and an ordinary singularity of the averaged radial-slow flow. 
As such, for $k$ values close to $k_c$, the trajectories of the averaged radial-slow system are canard cycles (Figure \ref{fig:mlttransition}(c), inset). Since the radial envelope exhibits canard cycles, the trajectories of the MLT system are torus canards, which manifest as amplitude-modulated spiking waveforms (Figure \ref{fig:mlttransition}(c)). 

As $k$ is further increased, the canard cycles of the averaged radial-slow system rapidly grow in amplitude and system \eqref{eq:MLT} eventually gives way to circle/fold-cycle bursting solutions (Figure \ref{fig:mlttransition}(d)). In that case, the intersection point $\{ \overline{g}=0 \} \cap \mathcal{P}$ lies on the repelling branch $\mathcal{P}_r$. Again, averaging theory and Fenichel theory guarantee that this persists as an unstable spiking trajectory of the full MLT system. 

Using this geometric intuition, we see that the toral folded singularity is the boundary where the spiking trajectories switch to bursting trajectories. In Figure \ref{fig:mlttwoparam}, we compute the locus of this toral folded singularity (TFS curve) in the $(k,g_{Ca})$ plane. 
The TFS curve locally partitions the parameter plane. The region below the TFS curve has the geometric configuration in which the ordinary singularity of the averaged radial-slow system lies on $\mathcal{P}_a$ (corresponding to spiking solutions). The region of the $(k,g_{Ca})$ plane above the TFS curve has the configuration in which the ordinary singularity of the averaged radial-slow system lies on $\mathcal{P}_r$ (corresponding to circle/fold-cycle bursting solutions).

\begin{figure}[h!]
\centering
\includegraphics[width=3.5in]{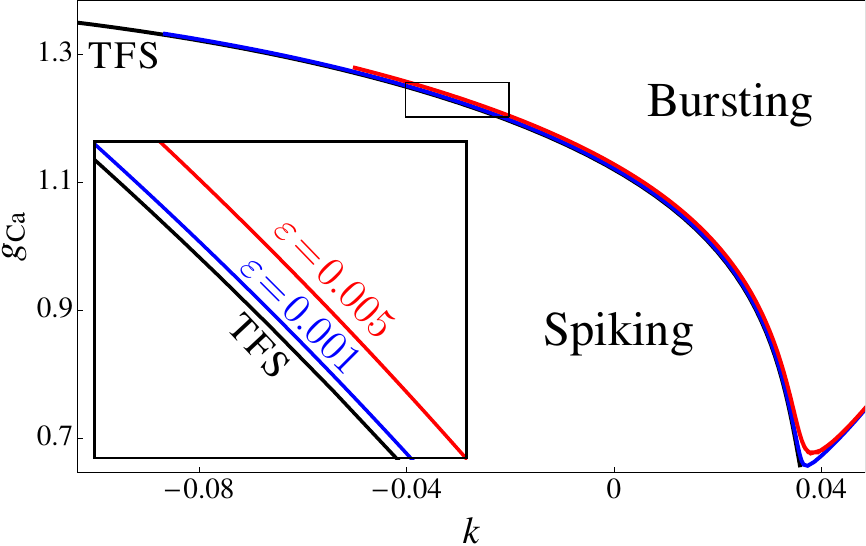}
\caption{Locus of the toral folded singularity (TFS; black curve) of the MLT model in the $(k,g_{Ca})$ plane. The TFS curve splits the parameter space between spiking and bursting behaviour, based on the geometry of the averaged radial-slow system. 
The blue and red curves are the two-parameter continuations of the torus bifurcation TR (see Figure \ref{fig:mltfullsystem}(e)) in the $(k,g_{Ca})$ plane. The curve of torus bifurcations converges to the TFS curve as $\eps \to 0$.}
\label{fig:mlttwoparam}
\end{figure}

Recall from Section \ref{subsec:toralFSN} that we conjectured that the unfolding of a toral FSN II would be a singular torus bifurcation. Figure \ref{fig:mlttwoparam} provides numerical evidence to support this. More specifically, two-parameter continuation of the torus bifurcation from Figure \ref{fig:mltfullsystem}(e) shows that the curve of torus bifurcations converges to the curve of toral folded singularities in the singular limit $\eps \to 0$.

\begin{remark}
In this degenerate setting of one slow variable, the toral folded singularity is a toral FSN II. Thus, we can apply the numerical continuation method outlined in Appendix \ref{app:TFScurve} to compute the loci of toral folded singularities in two-parameter families of 2-fast/1-slow systems.
\end{remark}

Our results here complement and extend those of \cite{Burke2012}. In \cite{Burke2012}, the location of the SNPOs in the $(y,k,g_{Ca})$ space was determined, but the spiking/bursting boundary was not computed. Here, we have shown that the toral FSN II is the appropriate singularity to track in order to determine the spiking/bursting boundary. Note that this is only a local partitioning of the parameter space; other bifurcations of the layer problem can appear and alter the dynamics \cite{Burke2012}.

\subsection{Torus Canards in the Hindmarsh-Rose Model} 	\label{subsec:TCHR}

We next consider the modified Hindmarsh-Rose system \cite{Burke2012,Tsaneva2010}
\begin{equation}	\label{eq:HR}
\begin{split}
\dot{x} &= sax^3-sx^2-y-bz, \\
\dot{y} &= \phi(x^2-y), \\
\dot{z} &= \eps \left( s\alpha x+\beta-kz \right),
\end{split}
\end{equation}
where $\beta$ and $s$ are taken to be the primary and secondary control parameters, respectively. The other parameters are fixed at
\[ a=0.5, \quad \phi=1, \quad \alpha = -0.1, \quad k=0.2, \quad b=10. \]
The Hindmarsh-Rose model is known to exhibit various types of bursting, such as plateau and pseudo-plateau \cite{Tsaneva2010}. In \cite{Burke2012}, the Hindmarsh-Rose system was also shown to possess subHopf/fold-cycle (i.e., elliptic) bursting. Moreover, torus canards were demonstrated to occur precisely in the transition region from spiking to elliptic bursting. 

Here, we extend those results by calculating the underlying toral folded singularity, and computing its locus in the $(\beta,s)$ plane (Figure \ref{fig:hr}). As in the MLT model, the toral folded singularity corresponds to the scenario in which an ordinary singularity of the averaged radial-slow flow crosses from $\mathcal{P}_a$ to $\mathcal{P}_r$ via an SNPO. 
%
%
These toral folded singularities split the $(\beta,s)$ plane between spiking and bursting behaviour. 

\begin{figure}[h!]
\centering
\includegraphics[width=3.5in]{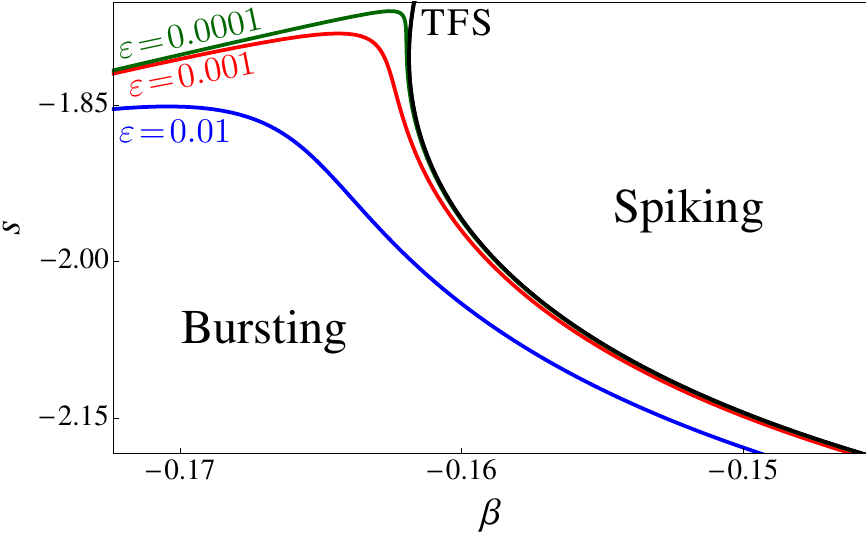}
\caption{Two parameter bifurcation structure of \eqref{eq:HR} with respect to $\beta$ and $s$. In the singular limit, the spiking/bursting boundary is given by the curve of toral folded singularities (TFS, black). Away from the singular limit, the spiking/bursting transition occurs at a torus bifurcation. The blue, red, and green curves are the two-parameter continuations of this torus bifurcation for $\eps=0.01$, $\eps=0.001$, and $\eps=0.0001$, respectively.}
\label{fig:hr}
\end{figure}

For non-zero perturbations, the Hindmarsh-Rose system transitions between spiking and bursting behaviour via a torus bifurcation. Figure \ref{fig:hr} shows that the two-parameter continuation of this torus bifurcation generates a folded curve in the $(\beta,s)$ plane (coloured curves). The right branch of this folded curve tends to the curve of toral folded singularities as $\eps \to 0$. Thus, we have further numerical evidence to support our conjecture from Section \ref{subsec:toralFSN} that the $\eps$-unfolding of a toral FSN II is a singular torus bifurcation. That is, the transition between spiking and bursting along the curve of singular torus bifurcations is mediated by a sequence of torus canards. 
Note that the left branch of the curve of torus bifurcations does not converge to the toral FSN II curve. That is, the toral FSN II is not the only way to create an invariant phase space torus.

\subsection{Torus Canards in the Wilson-Cowan-Izhikevich Model} 	\label{subsec:TCWCI}

As a final demonstration, we briefly consider the Wilson-Cowan-Izhikevich model \cite{Burke2012,Izhikevich2000} for interacting populations of excitatory and inhibitory neurons. The model equations are given by 
\begin{equation} 	\label{eq:WCI}
\begin{split}
\dot{x} &= -x+S(r_x+ax-by+u), \\
\dot{y} &= -y+S(r_y+cx-dy+fu), \\
\dot{u} &= \eps(k-x),
\end{split}
\end{equation}
where $S(x) = (1+\exp(-x))^{-1}$. The small parameter $\eps$ induces a separation of time-scales, so that $x$ and $y$ are fast, and $u$ is slow.  
The Wilson-Cowan-Izhikevich model can exhibit a wide array of different bursting dynamics \cite{Burke2012}. Here, we focus on its fold/fold-cycle bursting dynamics. We treat $k$ and $r_x$ as the principal bifurcation parameters, and keep all other parameters fixed at the values
\[ r_y=-9.7, \quad a=10.5, \quad b=10, \quad c=10, \quad d=-2, \quad f=0.3. \]
Figure \ref{fig:wci} shows the curve of toral folded singularities in the $(k,r_x)$ plane. As in the MLT and Hindmarsh-Rose models, the TFS curve is the singular limit boundary between spiking and bursting behaviour. Moreover, the toral folded singularities in this degenerate setting of one slow variable are toral FSNs of type II.

\begin{figure}[h!]
\centering
\includegraphics[width=3.5in]{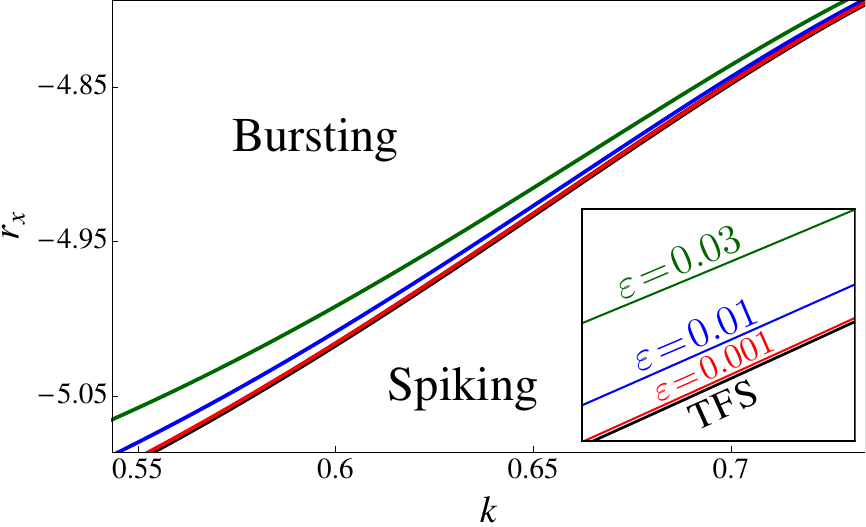}
\caption{Toral folded singularities (TFS, black) for system \eqref{eq:WCI} in the $(k,r_x)$ plane.   The curve of torus bifurcations (coloured) converges to the TFS curve in the singular limit $\eps \to 0$.}
\label{fig:wci}
\end{figure}

For $\eps$ small and positive, the transition from spiking to bursting occurs by way of a torus canard explosion. Two-parameter continuation of the associated torus bifurcation generates a curve that locally splits the parameter plane between spiking and bursting. 
Figure \ref{fig:wci} shows that these curves of torus bifurcations converge to the curve of toral folded singularities in the singular limit $\eps \to 0$, which again supports the conjecture from Section \ref{subsec:toralFSN}.

\section{Proof of the Averaging Theorem for Folded Manifolds of Limit Cycles}		\label{sec:arbitrarydimensions}

We now turn our attention to formally proving of our main theoretical result (Theorem \ref{thm:averaging}), namely the extension of the averaging method for slow/fast systems in the neighbourhood of a folded manifold of limit cycles. In this section, we state and prove the main result in the general case of two fast variables and $k$ slow variables, where $k$ is any positive integer. 

We consider slow/fast systems \eqref{eq:main} with $n=2$ fast variables and $k \geq 1$ slow variables. The only modification to Assumptions \ref{ass:man}, \ref{ass:fold}, and \ref{ass:homoclinic} is the dimension of the vector of slow variables. 
We start with the following result, which will be needed in the proof of the main result. 

\begin{lemma}	\label{lemma:linear}
Given Assumptions \ref{ass:man}, \ref{ass:fold}, and \ref{ass:homoclinic}, let $(\Gamma(t,y),y) \in \mathcal{P}_L$ and $q(t,y)$ be a unit normal for $\Gamma(t,y)$. Then
\[ \int_0^{T(y)} q \cdot (D_x f)\, q \, dt = \int_0^{T(y)} \left( \operatorname{tr}\, D_x f - \frac{f \cdot (D_x f) \, f}{\lVert f \rVert^2} \right) \, dt = 0, \]
where $f$ and its derivatives $D_xf$ are evaluated along $(\Gamma(t,y),y)$.
\end{lemma}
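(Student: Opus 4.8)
The plan is to first collapse the two integrals in the statement into one by a pointwise identity, and then to evaluate that single integral by recognizing its integrand as an exact time derivative along the periodic orbit.

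For the first step, I would note that $p(t,y)=f/\lVert f\rVert$ and $q(t,y)=Jf/\lVert f\rVert$ form an orthonormal basis of $\mathbb{R}^2$ at each $t$: since $J$ is rotation by $\pi/2$, we have $Jf\perp f$ and $\lVert Jf\rVert=\lVert f\rVert$. As the trace of a linear map equals the sum of its diagonal entries in \emph{any} orthonormal basis, this gives the pointwise identity
\[ q\cdot(D_xf)\,q = \operatorname{tr}\,D_xf - p\cdot(D_xf)\,p = \operatorname{tr}\,D_xf - \frac{f\cdot(D_xf)\,f}{\lVert f\rVert^2}, \]
so that integrating over $[0,T(y)]$ immediately yields the first equality of the lemma and reduces the problem to showing the common integral vanishes.

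For the second step, I would split $\int_0^{T(y)}\bigl(\operatorname{tr}\,D_xf - \lVert f\rVert^{-2}\, f\cdot(D_xf)\,f\bigr)\,dt$ into two pieces. The first piece is $T(y)\,\varphi_2$, which vanishes by the definition of $\mathcal{P}_L$ in Assumption~\ref{ass:fold}. For the second piece, since $\dot\Gamma=f(\Gamma,y,0)$ along the orbit, the chain rule gives $\frac{d}{dt}f(\Gamma(t,y),y,0)=(D_xf)\,f$, hence $f\cdot(D_xf)\,f=\tfrac12\frac{d}{dt}\lVert f\rVert^2=\lVert f\rVert^2\,\frac{d}{dt}\ln\lVert f\rVert$; dividing by $\lVert f\rVert^2$ exhibits the integrand as the exact derivative $\frac{d}{dt}\ln\lVert f\rVert$, whose integral over one full period of $\Gamma$ vanishes by $T(y)$-periodicity.

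I do not anticipate a serious obstacle. The one point to handle carefully is the legitimacy of writing $\ln\lVert f\rVert$: a periodic orbit of finite, nonzero period cannot be — nor, by uniqueness of solutions, pass through — an equilibrium, so $\lVert f(\Gamma(t,y),y,0)\rVert>0$ for all $t$ and the logarithm is smooth and well-defined. (This uses only Assumption~\ref{ass:man}; the disjointness hypothesis of Assumption~\ref{ass:homoclinic} is not needed for this lemma.)
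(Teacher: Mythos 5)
Your proof is correct, and it reaches the same two key observations as the paper's proof — the integral of $\operatorname{tr}\,D_xf$ is $T(y)\varphi_2=0$ on $\mathcal{P}_L$, and $\lVert f\rVert^{-2}f\cdot(D_xf)f=\frac{d}{dt}\ln\lVert f\rVert$ integrates to zero by periodicity — so the second half of your argument is essentially identical to the paper's. Where you differ is in establishing the pointwise identity $q\cdot(D_xf)\,q=\operatorname{tr}\,D_xf-\lVert f\rVert^{-2}f\cdot(D_xf)f$: you obtain it in one line from the basis-invariance of the trace applied to the orthonormal pair $\{p,q\}$, whereas the paper computes $q\cdot(D_xf)\,q=\lVert f\rVert^{-2}f\cdot(\operatorname{adj}D_xf)f$ explicitly and then invokes invertibility of $D_xf$ along $\Gamma$ together with the Cayley--Hamilton theorem to convert the adjugate into $(\operatorname{tr}D_xf)\,\mathbb{I}_2-D_xf$. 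Your route is cleaner and slightly more robust: it does not require $D_xf$ to be invertible along $\Gamma$ (a hypothesis the paper uses but does not list among the assumptions; it can in any case be dispensed with by a polynomial-identity/continuity argument, or by noting that $\operatorname{adj}A=(\operatorname{tr}A)\mathbb{I}_2-A$ holds identically for $2\times 2$ matrices). Your closing remark that $\lVert f\rVert>0$ along a finite-period orbit, which legitimizes the logarithm, is a point the paper leaves implicit; it is correct and worth stating.
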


\begin{proof}
We first rewrite the expression $q \cdot (D_xf) \,q$ as follows.
\begin{align*}
q \cdot (D_x f)\, q &= \frac{1}{\lVert f \rVert^2} \begin{pmatrix} f_2 \\ -f_1 \end{pmatrix} \cdot \begin{pmatrix} f_{1x_1} & f_{1x_2} \\ f_{2x_1} & f_{2x_2} \end{pmatrix} \begin{pmatrix} f_2 \\ -f_1 \end{pmatrix}, \\
&= \frac{1}{\lVert f \rVert^2} \begin{pmatrix} f_1 \\ f_2 \end{pmatrix} \cdot \begin{pmatrix} f_{2x_2} & -f_{1x_2} \\ -f_{2x_1} & f_{1x_1} \end{pmatrix} \begin{pmatrix} f_1 \\ f_2 \end{pmatrix}, \\
&= \frac{1}{\lVert f \rVert^2} f \cdot \left( \operatorname{adj}\, D_x f \right) f,
\end{align*}
where $\operatorname{adj}$ denotes the classical adjoint. Since $D_x f$ is invertible along $\Gamma$, we can rewrite the adjoint in terms of the inverse, which gives
\begin{equation}	\label{eq:qdxfq}
q \cdot (D_x f)\, q = \frac{\det D_x f}{\lVert f \rVert^2} \,f \cdot \left( D_x f\right)^{-1} f.
\end{equation}
An application of the Cayley-Hamilton theorem gives the following equivalent expression for $(D_x f)^{-1}$:
\[ (D_xf)^{-1} = \frac{1}{\det D_x f} \left( \left(\operatorname{tr}\, D_x f \right) \, \mathbb{I}_2 - D_x f  \right), \]
where $\mathbb{I}_2$ denotes the $2 \times 2$ identity matrix. Substituting into \eqref{eq:qdxfq}, we have
\begin{equation} \label{eq:qDxfq}
q \cdot (D_x f)\, q = \operatorname{tr}\, D_xf - \frac{f \cdot \left( D_x f \right)\, f}{\lVert f \rVert^2},
\end{equation}
which holds for any limit cycle in $\mathcal{P}$.
Now, integrating over one period of $\Gamma(t,y)$, we see that the first term on the right hand side is just the Floquet exponent and hence vanishes since $(\Gamma(t,y),y) \in \mathcal{P}_L$. It remains to show that the second term on the right hand side has zero average. This follows from the fact that 
\[ \frac{f \cdot \left( D_x f \right)\, f}{\lVert f \rVert^2} = \frac{1}{2}\, \frac{d}{dt} \!\left( \log \left( \frac{1}{2} f \cdot f \right) \right), \]
and all functions are being evaluated over $T(y)$-periodic arguments.
\end{proof}

We are now in a position to prove Theorem \ref{thm:averaging}, which we restate in Theorem \ref{thm:averagingarbitraryslow} in the more general case of $k$ slow variables, where $k$ is any positive integer.

\begin{theorem}[\bf{Averaging Theorem for $k$-slow variables}] \label{thm:averagingarbitraryslow}
Consider system \eqref{eq:main} with $x \in \mathbb{R}^2$ and $y\in \mathbb{R}^k$ under assumptions \ref{ass:man}, \ref{ass:fold}, and \ref{ass:homoclinic}, and let $(\Gamma(t,y),y) \in \mathcal{P}_L$. Then there exists a sequence of near-identity transformations such that the averaged dynamics of \eqref{eq:main} in a neighbourhood of $(\Gamma(t,y),y)$ are approximated by
\begin{equation} \label{eq:averagingarbitraryslow}
\begin{split}
\dot{R} &= \overline{a} \cdot u + \overline{b} R^2 + \overline{c} \cdot R u + \mathcal{O}(\eps,R^3,R^2u,u^2), \\
\dot{u} &= \eps \left( \overline{g} + \overline{d} R + \overline{e} u + \mathcal{O}(\eps,R^2,Ru, u^2) \right),
\end{split}
\end{equation}
where an overline denotes an average over one period of $\Gamma(t,y)$ and the averaged coefficients can be computed explicitly (see Section \ref{subsec:averagedcoefficients}).
\end{theorem}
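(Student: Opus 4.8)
The plan is to carry out the three moves indicated after Theorem~\ref{thm:averaging}: pass to a frame comoving with the family $\mathcal{P}$ of limit cycles, eliminate the linear radial perturbation by a Floquet-type rescaling, and then average the resulting $T$-periodic coefficients.

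First I would fix the folded cycle $(\Gamma(t,y_0),y_0)\in\mathcal{P}_L$, set $u:=y-y_0$, and — using the unit tangent $p=f/\lVert f\rVert$ and unit normal $q=Jf/\lVert f\rVert$ along $\Gamma(t,y_0)$ — write a point near the cycle as $x=\Gamma(t,y_0)+\sigma\,p+\rho\,q$, with $\sigma,\rho$ the tangential and normal deviations and $u$ carried by $\dot u=\eps g$. The tangential direction carries the trivial Floquet multiplier $1$ and is removed by a phase reparametrization, leaving a scalar equation for the normal deviation $\rho$. Taylor-expanding $f(\Gamma+\rho q,\,y_0+u,\,\eps)$ about $(\Gamma,y_0,0)$ to second order in $\rho$ and first order in $(u,\eps)$ and projecting onto $q$ gives
\begin{equation*}
\dot\rho=\ell(t)\,\rho+\big((D_yf)^Tq\big)\cdot u+\tfrac12\,q\cdot\!\begin{pmatrix}(q\cdot\nabla_x)^2f_1\\(q\cdot\nabla_x)^2f_2\end{pmatrix}\rho^2+\cdots,
\end{equation*}
where $\ell(t)=q\cdot(D_xf)q=\operatorname{tr}\,D_xf-f\cdot(D_xf)f/\lVert f\rVert^2$ by the identity \eqref{eq:qDxfq} established in the proof of Lemma~\ref{lemma:linear}; the slow equation becomes $\dot u=\eps\big(g+(\nabla_xg\cdot q)\,\rho+(D_yg)\,u+\cdots\big)$, and every coefficient is $T(y_0)$-periodic in $t$.

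The decisive step is removing the linear radial term $\ell(t)\rho$. Since $(\Gamma,y_0)\in\mathcal{P}_L$, Lemma~\ref{lemma:linear} gives $\int_0^T\ell\,dt=0$, so the fundamental solution $\Phi$ of $\dot\Phi=\ell\,\Phi$, $\Phi(0)=1$ satisfies $\Phi(T)=\exp\!\big(\int_0^T\ell\,dt\big)=1$ and is hence $T$-periodic and bounded. The rescaling $\rho=\Phi(t)R$ then cancels the linear term exactly and is responsible for the factors $1/\Phi$ and $\Phi$ appearing in the coefficients $a_j,b,d_j$ of Section~\ref{subsec:averagedcoefficients}; crucially, the quadratic term persists with a $T$-periodic, non-mean-zero coefficient $b(t)$, which furnishes the fold of the averaged critical manifold (hence the non-degeneracy condition $\overline b\neq0$). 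One is left with $\dot R=a(t)\cdot u+b(t)R^2+c(t)\cdot Ru+\cdots$ and $\dot u=\eps\big(g(t)+d(t)R+e_0(t)u+\cdots\big)$, all coefficients $T$-periodic. I would then apply a finite sequence of near-identity transformations $R\mapsto R+\alpha(t)\cdot u+\beta(t)R^2+\cdots$ and $u\mapsto u+\mathcal{O}(\eps)$, with generators $\dot\alpha_j=a_j-\overline a_j$, $\dot\beta=b-\overline b$, which replace each oscillating coefficient by its period-average at the cost of an $\mathcal{O}(\eps,R^3,R^2u,u^2)$ remainder. Tracking the cross terms these substitutions produce — the shift $\alpha\cdot u$ fed through $bR^2$ and $dR$, the shift $\beta R^2$ fed through $a\cdot u$ — reproduces the correction formulas $c_j=(H^Tq)_j+2\alpha_j b-2\beta\,\overline a_j$ and $e_{ij}=\partial g_i/\partial y_j+d_i\alpha_j$; averaging those expressions gives $\overline c$ and $\overline e$, and collecting everything yields \eqref{eq:averagingarbitraryslow}.

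The main obstacle is that this is averaging in the \emph{folded} regime. Classical periodic averaging over a family of hyperbolic limit cycles relies on the nontrivial Floquet exponent $\varphi_2$ being bounded away from $0$, i.e.\ on an exponential dichotomy for the radial variational equation; on $\mathcal{P}_L$ that dichotomy degenerates. What rescues the construction is precisely Lemma~\ref{lemma:linear}: vanishing mean of $\ell$ is exactly the condition that $\Phi$ remain \emph{bounded} rather than grow, so the change of variables $\rho=\Phi R$ is uniform and the reduced system is a bona fide slow/fast system with a quadratic fold — to be handled by geometric singular perturbation theory rather than by a normally hyperbolic reduction. The substantive part is then to verify that the near-identity transformations stay $\mathcal O(1)$ and that the asymptotic ordering of the remainder is preserved uniformly in a full neighbourhood of the fold curve (not merely over $\mathcal P_a$); the corresponding error estimates are taken up separately in this section.
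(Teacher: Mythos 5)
Your proposal is correct and follows essentially the same route as the paper's proof: the comoving-frame ansatz $x=\Gamma_0+rq$ with $y=u+\eps w$, projection onto the unit normal, elimination of the linear radial term via the $T$-periodic fundamental solution $\Phi$ (whose boundedness is exactly Lemma \ref{lemma:linear}), and the sequence of near-identity transformations with generators $\dot\alpha_j=a_j-\overline a_j$, $\dot\beta=b-\overline b$ producing the stated cross-term formulas for $c_j$ and $e_{ij}$. The only cosmetic difference is that you introduce an explicit tangential deviation $\sigma p$ and remove it by phase reparametrization, whereas the paper omits it from the ansatz and handles the tangential component directly through the projection onto $q$.
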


\begin{proof}
We first give the proof in the case $k=2$, and then describe the required modifications for arbitrary $k$. Without loss of generality, we assume that there is a limit cycle in $\mathcal{P}_L$ with $y=0$, and we denote this periodic solution by $\Gamma_0(t)$. In the first step, we make a coordinate transformation that switches the fast variables to a coordinate frame that moves with the limit cycle $\Gamma_0$, and splits the slow motions into their mean and (small) fluctuating parts. This is achieved via the coordinate transformation 
\begin{equation}	\label{eq:xytrans}
\begin{split}
x &= \Gamma_0(t) + r(t) \, q(t,0), \\
y &= u(t) + \eps w(t,u,r),
\end{split}
\end{equation}
where $r$ is the small (real-valued) radial perturbation from $\Gamma_0$ in the direction of a unit normal $q$ to $\Gamma_0$, and $u$ and $w$ are designed to be the mean and fluctuating components of $y$, respectively. Substituting \eqref{eq:xytrans} into \eqref{eq:main} and Taylor expanding $f$ and $g$ gives
\begin{align*}
\frac{dr}{dt} q + r \frac{dq}{dt} &= \left( D_x f \,q \right) r+ \frac{1}{2} \begin{pmatrix} (q \cdot \nabla_x)^2 \,f_1 \\ (q \cdot \nabla_x)^2 \,f_2 \end{pmatrix} r^2 + (D_yf)\, u + \begin{pmatrix} q \cdot (D_{xy}f_1 \,u) \\ q \cdot (D_{xy}f_2 \,u) \end{pmatrix} r + \mathcal{O}\left( \eps, r^3, r^2u, u^2 \right), \\
\frac{du}{dt} &= \eps \left( g+\left( D_x g\,q \right) r + (D_yg)\, u+\mathcal{O}(\eps,ru,r^2) - \frac{dw}{dt} \right),
\end{align*}
where $(D_{xy}f_k)_{ij} := \left( \frac{\partial^2 f_k}{\partial x_i \partial y_j} \right)$, for $i=1,2$ and $j=1,2$, denotes the matrix of mixed second order derivatives of $f_k$, where $k=1,2$. Note that $f, g$, and their derivatives are evaluated at $(\Gamma_0,0)$, and we have used the fact that $\Gamma_0 \in \mathcal{P}$ so that $\frac{d\Gamma_0}{dt} = f(\Gamma_0,0)$. To isolate the radial evolution, we project the fast components in the direction of $q$, which is equivalent to left-multiplication by the matrix
\[ \begin{pmatrix} q_1 & q_2 & 0 \\ 0 & 0 & 1 \end{pmatrix}, \]
where $q_1$ and $q_2$ are the components of the unit normal $q$. Thus, taking the projection in the direction of $q$, we obtain
\begin{align*}
\frac{dr}{dt} &= q \cdot \left( D_x f\right) q\, r+ \frac{1}{2} q\cdot \begin{pmatrix} (q \cdot \nabla_x)^2 \,f_1 \\ (q \cdot \nabla_x)^2 \,f_2 \end{pmatrix} r^2 + q\cdot (D_yf\, u) + q\cdot \begin{pmatrix} q \cdot (D_{xy}f_1 \,u) \\ q \cdot (D_{xy}f_2 \,u) \end{pmatrix} r+ \mathcal{O}\left( \eps, r^3, r^2u, u^2 \right), \\
\frac{du}{dt} &= \eps \left( g+\left( D_x g\,q \right) r + (D_yg)\, u+\mathcal{O}(\eps,ru,r^2) - \frac{dw}{dt} \right).
\end{align*}
Now, let $H$ be the matrix
\[ H_{ij} := \left( q\cdot \nabla_x \left( \frac{\partial f_i}{\partial y_j} \right) \right). \]
Then, by a straightforward calculation, we can rewrite the $\mathcal{O}(ru)$ terms in the radial equation as 
\[ q\cdot \begin{pmatrix} q \cdot (D_{xy}f_1 \,u) \\ q \cdot (D_{xy}f_2 \,u) \end{pmatrix} r  = q \cdot \left( Hu \right)\, r = \left( H^T q \right) \cdot r u. 
\]
Moreover, by equation \eqref{eq:qDxfq}, we can rewrite linear $r$-term in the $r$-equation to give
\begin{equation}	\label{eq:ru}
\begin{split}
\frac{dr}{dt} &= \left( \operatorname{tr} D_xf - \frac{f \cdot \left( D_x f \right) f}{\lVert f \rVert^2} \right)\! r+ \frac{1}{2}q \cdot \! \begin{pmatrix} (q \cdot \! \nabla_x)^2 \,f_1 \\ (q \cdot \! \nabla_x)^2 \,f_2 \end{pmatrix} r^2 + q\cdot (D_yf\, u) + \left( H^T q \right) \cdot r u + \mathcal{O}\!\left( \eps, r^3,r^2u, u^2 \right), \\
\frac{du}{dt} &= \eps \left( g+\left( D_x g\,q \right) r + (D_yg)\, u+\mathcal{O}(\eps,ru,r^2) - \frac{dw}{dt} \right).
\end{split}
\end{equation}

Now, let $\Phi$ be the fundamental solution of the linear radial flow. That is, $\Phi$ satisfies
\[ \frac{d\Phi}{dt} = \left( \operatorname{tr} D_xf - \frac{f \cdot \left( D_x f \right) f}{\lVert f \rVert^2} \!\right)\, \Phi, \qquad \Phi(0)=1. \]
By Lemma \ref{lemma:linear}, the solution $\Phi$ is $T(0)$-periodic and bounded for all time with the explicit solution
\[ \Phi(t) = \exp \left\{ \int_0^t \operatorname{tr} D_xf(\Gamma_0(s),0,0) \, ds \right\} \frac{\lVert f(\Gamma_0(0),0,0) \rVert}{\lVert f(\Gamma_0(t),0,0) \rVert}. \]
Letting $r = \Phi\, \tilde{r}$ removes the linear $r$-term from the radial evolution equation. Thus, after transformation, system \eqref{eq:ru} becomes
\begin{equation}	\label{eq:rtildeu}
\begin{split}
\frac{d\tilde{r}}{dt} &= \frac{1}{2} q\cdot \begin{pmatrix} (q \cdot \nabla_x)^2 \,f_1 \\ (q \cdot \nabla_x)^2 \,f_2 \end{pmatrix} \Phi \,\tilde{r}^2 + \frac{1}{\Phi} ((D_yf)^T q ) \cdot u + \left( H^T q \right) \cdot \tilde{r} u + \mathcal{O}\left( \eps, \tilde{r}^3, \tilde{r}^2u, u^2 \right), \\
\frac{du}{dt} &= \eps \left( g+\left( D_x g\,q \right) \Phi\, \tilde{r} + (D_yg)\, u+\mathcal{O}(\eps,\tilde{r}u,\tilde{r}^2) - \frac{dw}{dt} \right).
\end{split}
\end{equation}
At this stage, we simplify the notation by letting
\begin{align*} 
a(t) := \frac{1}{\Phi} ((D_yf)^T q ), \,\, b(t) := \frac{1}{2} q\cdot \begin{pmatrix} (q \cdot \nabla_x)^2 \,f_1 \\ (q \cdot \nabla_x)^2 \,f_2 \end{pmatrix} \Phi, \,\, \tilde{c}(t) := H^T q,  \, \text{ and }\, d(t) := \left( D_x g\,q \right) \Phi,
\end{align*}
denote the vectors of coefficients in system \eqref{eq:rtildeu}. Note that $b(t)$ is actually a scalar function. Then system \eqref{eq:rtildeu} becomes 
\begin{equation}	\label{eq:rtildeu2}
\begin{split}
\frac{d\tilde{r}}{dt} &= b(t) \,\tilde{r}^2 + a(t) \cdot u + \tilde{c}(t) \cdot \tilde{r}\,u+ \mathcal{O}\left( \eps, \tilde{r}^3, \tilde{r}^2u, u^2 \right), \\
\frac{du}{dt} &= \eps \left( g+d(t)\, \tilde{r} + (D_yg)\, u+\mathcal{O}(\eps,\tilde{r}u,\tilde{r}^2) - \frac{dw}{dt} \right).
\end{split}
\end{equation}

Next, we introduce a near-identity coordinate transformation
\begin{align}	\label{eq:NIT}
\tilde{r} = R + \alpha \cdot u + \beta R^2+ \gamma \cdot R u + \mathcal{O}(R^3,R^2u, u^2), 
\end{align}
where $R$ and $u$ are small. The idea is to choose $\alpha, \beta, \gamma$, and $w$ to remove the small fluctuations over one period of $\Gamma_0$, leaving only the mean contributions. Substituting \eqref{eq:NIT} into system \eqref{eq:rtildeu2} gives
\begin{equation}	\label{eq:Runonav} 
\begin{split}
\frac{dR}{dt} &= \left( b-\frac{d\beta}{dt} \right) R^2 + \left( a-\frac{d\alpha}{dt} \right) \cdot u + \left( c - \frac{d\gamma}{dt} \right) \cdot R u + \mathcal{O}(\eps,R^3,R^2u, u^2), \\
\frac{du}{dt} &= \eps \left( g + d(t)\, R + (D_y g + d(t) \alpha(t)^T)\, u + \mathcal{O}(\eps,R u, R^2) - \frac{dw}{dt} \right),
\end{split}
\end{equation}
where $c:= \tilde{c}+2\alpha b-2\beta \left( a- \frac{d\alpha}{dt} \right)$. In general, the integral of $b(t)$ over one period of $\Gamma_0$ has nonzero average, and so we cannot choose $\beta$ to remove $b(t)$ completely, otherwise the $\beta R^2$ term in \eqref{eq:NIT} would become large over $\mathcal{O}\left( \eps^{-1} \right)$ times. Thus, at most, we can remove everything but the mean of $b(t)$ by choosing $\beta$ so that
\begin{align*}
\frac{d\beta}{dt} &= b(t) - \frac{1}{T} \int_0^{T(0)} b(t)\, dt, \quad \beta(0)=0, 
\end{align*}
which has a bounded $T$-periodic solution. Similarly, we cannot completely remove the linear $u$-terms in the $R$-equation. We can remove everything but the average of $a(t)$ by making the choice
\begin{align*}
\frac{d\alpha_j}{dt} &= a_j(t) - \frac{1}{T} \int_0^{T(0)} a_j(t)\, dt, \quad \alpha_j(0)=0, \text{  for } j=1,2,
\end{align*}
which has bounded $T$-periodic solutions. Again, everything but the mean of the coefficients of the $Ru$ terms in the $R$-equation can be removed by setting
\begin{align*}
\frac{d\gamma_j}{dt} &= c_j(t) - \frac{1}{T} \int_0^{T(0)} c_j(t)\, dt, \quad \gamma_j(0)=0, \text{  for } j=1,2,
\end{align*}
which yields bounded, $T$-periodic solutions. Iteratively choosing the higher order terms in \eqref{eq:NIT} in the same way allows us to average the fast radial equation in \eqref{eq:Runonav}, giving
\begin{align*} 
\frac{dR}{dt} &= \overline{b} R^2 + \overline{a} \cdot u + \overline{c} \cdot R\, u + \mathcal{O}(\eps,R^3, R^2u, u^2), \\
\frac{du}{dt} &= \eps \left( g + d(t)\, R + (D_y g + d(t) \alpha(t)^T)\, u + \mathcal{O}(\eps,R u, R^2) - \frac{dw}{dt} \right),
\end{align*}
where the overline denotes averages over $\Gamma_0(t)$. 

Finally, to complete the proof, we average the slow motions by expanding $w$ as a power series in $R$ and $u$, and choosing the coefficients in that expansion in a manner analogous to the above. More precisely, let 
\[ e_{jl}(t) := (D_y g)_{jl} + d_j(t) \,\alpha_l(t), \,\, \text{ for } j=1,2, \,\text{ and } l=1,2, \]
denote the matrix of coefficients of the linear $u$-terms in the $u$-equations, and choose the components of $w$ such that
\[ \frac{dw_j}{dt} = g_j-\overline{g}_j + \left( d_j(t) - \overline{d}_j \right)R + \sum_{l=1}^2 \left( e_{jl}-\overline{e}_{jl} \right)u_l+\mathcal{O}(\eps,Ru,R^2), \]
for $j=1,2$. 

To obtain the extension to an arbitrary number, $k$, of slow variables, we simply replace the ranges of the indices $j$ and $l$ above with $j = 1,2,\ldots, k$ and $l = 1,2,\ldots, k$. This completes the proof. 
\end{proof}

By Theorem \ref{thm:averagingarbitraryslow}, we now have the 2-fast/$k$-slow analogue for the detection of toral folded singularities, and hence torus canards. A toral folded singularity is a folded limit cycle $\Gamma \in \mathcal{P}_L$ such that
\[ \overline{a} \cdot \overline{g} = 0, \]
where the overline denotes an average over one period of $\Gamma$. Again, this normal switching condition is a statement of tangency (in the slow subspace) between the projection of the slow drift along $\mathcal{P}$ and the projection of $\mathcal{P}_L$, at the toral folded singularity. The classification of the toral folded singularity is again based on its classification as a folded singularity of the $1$-fast/$k$-slow averaged radial-slow system \eqref{eq:averagingarbitraryslow}. More specifically, for $k \geq 2$, the desingularized reduced system of \eqref{eq:averagingarbitraryslow} generically has a $(k-2)$-dimensional submanifold, $\mathcal{M}$, of folded singularities. As such, each folded singularity in $\mathcal{M}$ has $(k-2)$-zero eigenvalues \cite{Wechselberger2012}. Thus, the toral folded singularity is classified according to the remaining two eigenvalues (Definition \ref{def:class}). In the degenerate setting of $k=1$ slow variable, the toral folded singularity is typically a toral FSN of type II.

\begin{remark}	\label{rem:hyperbolic}
The averaging procedure developed in this article for folded manifolds of limit cycles recovers the results of averaging theory on normally hyperbolic manifolds of limit cycles \cite{Pontryagin1960}. More precisely, consider an attracting limit cycle $(\Gamma(t,y),y) \in \mathcal{P}_a$. Then Lemma \ref{lemma:linear} becomes
\[ \frac{1}{T}\int_0^T q \cdot (D_x f) \,q\,dt = \frac{1}{T} \int_0^T \operatorname{tr}\, D_x f\, dt =  \varphi_2 <0, \]
where $\varphi_2$ is the (non-trivial) Floquet exponent of $\Gamma$. This means that the averaging process implemented in the proof of Theorems \ref{thm:averaging} and \ref{thm:averagingarbitraryslow} cannot completely remove the linear $r$-term from the radial evolution equation. Consequently, the slow/fast system that results from the averaging process has a normally hyperbolic (attracting) critical manifold $\mathcal{S}_a$, which corresponds to the attracting manifold of limit cycles. Fenichel theory then states that $\mathcal{S}_a$ will persist as a (locally) invariant attracting slow manifold, $\mathcal{S}_a^{\eps}$, which corresponds to a locally invariant manifold of limit cycles $\mathcal{P}_a^{\eps}$ for sufficiently small $\eps$. Similarly, the normally hyperbolic segments of the repelling manifold of limit cycles $\mathcal{P}_r$ will persist as (locally) invariant repelling manifolds $\mathcal{P}_r^{\eps}$ for sufficiently small $\eps$.
\end{remark}

We now provide asymptotic error estimates for the averaging method (Theorem \ref{thm:averagingarbitraryslow}) in the main cases. Without loss of generality, let $(\Gamma_0(t),0)$ be a limit cycle on $\mathcal{P}_L$, and let $q(t,0)$ denote the unit normal to $\Gamma_0$. Let $(x(t),y(t))$ be a solution of \eqref{eq:main} and $(R(t),u(t))$ be a solution of the averaged radial-slow system \eqref{eq:averagingarbitraryslow}, where system \eqref{eq:averagingarbitraryslow} is obtained by expanding and averaging around $(\Gamma_0,0)$.

\begin{theorem} \label{cor:error}
If $\lVert \left(x(0) \cdot q(0,0) - R(0) , y(0) - u(0) \right) \rVert = \mathcal{O}(\eps)$, then for $0< \eps \ll 1$, we have
\begin{align*}
\left\lVert \begin{pmatrix} x(t) \cdot q(t,0) - R(t) \\ y(t) - u(t) \end{pmatrix} \right\rVert  &= \mathcal{O}\left( \eps^p \right),
\end{align*}
for $\mathcal{O} \left(\eps^{-p}  \right)$ times $t$, where 
\begin{itemize}
\item $p = \frac{1}{2}$ for a TFN, toral folded saddle, and toral FSN II, 
\item $p = \frac{1}{4}$ for a toral FSN I, and 
\item $p = \frac{1}{3}$ for a regular folded limit cycle.
\end{itemize}
\end{theorem}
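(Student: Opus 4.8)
The plan is to combine the normal-form reduction already established in Theorem \ref{thm:averagingarbitraryslow} with a case-by-case geometric blow-up of the toral folded singularity, and then to propagate a Gronwall-type estimate through the resulting chart; the exponent $p$ will be the one dictated by whichever blow-up is relevant. First, I would revisit the near-identity scheme in the proof of Theorem \ref{thm:averagingarbitraryslow} and iterate it $N=N(p)$ times, with $N$ chosen large enough that, in the comoving radial--slow coordinates $(R,u)$, the full system \eqref{eq:main} agrees with the averaged system \eqref{eq:averagingarbitraryslow} up to a remainder of size $\mathcal{O}(\eps^{N})+\mathcal{O}(|(R,u)|^{N})$, uniformly on a fixed tubular neighbourhood of $\Gamma_{0}\subset\mathcal{P}_{L}$. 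This is legitimate because the generating functions $\alpha_{j},\beta,\gamma_{j},w$ solve linear ODEs with bounded $T$-periodic right-hand sides (as in \eqref{eq:Runonav}), so every step is a controlled near-identity change and the remainders genuinely are that small; moreover the hypothesis on the initial data is, by construction, already stated in these coordinates.

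Next, I would split the time interval into the portion where $(R,u)$ remains an $\mathcal{O}(1)$-distance from the fold curve $\mathcal{L}$ and the portion inside an $\mathcal{O}(\eps^{p})$-neighbourhood of the toral folded singularity. On the former, $\mathcal{S}_{a}$ (resp.\ $\mathcal{S}_{r}$) is normally hyperbolic, and classical higher-order averaging together with Fenichel theory (cf.\ Remark \ref{rem:hyperbolic}) gives a deviation of size $\mathcal{O}(\eps)$ over $\mathcal{O}(\eps^{-1})$ units of $t$, which is already subsumed by $\mathcal{O}(\eps^{p})$. Inside the fold region I would first straighten the parabolic critical manifold $\mathcal{S}$ by a linear/near-identity change, bringing \eqref{eq:averagingarbitraryslow} into the appropriate canard normal form, and then apply the geometric blow-up: the Szmolyan--Wechselberger and Krupa--Szmolyan analyses \cite{Szmolyan2001,Wechselberger2005,Krupa2010} for the toral folded node, toral folded saddle and toral FSN II, the blow-up of \cite{Vo2015} for the toral FSN I, and the classical fold (jump-point) blow-up for a regular folded limit cycle. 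Each prescribes an anisotropic rescaling of $(R,u)$ by powers of $\eps$ (with $R=\eps^{p}\bar R$) and a rescaled time $\bar t=\eps^{p}t$; demanding that the leading terms $\overline{b}R^{2}$ and $\overline{a}\cdot u$ balance is exactly what fixes $p\in\{\tfrac12,\tfrac14,\tfrac13\}$, and the orbit then crosses the chart in $\mathcal{O}(1)$ units of $\bar t$, i.e.\ in $\mathcal{O}(\eps^{-p})$ units of $t$.

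Finally, I would propagate the error inside the blown-up chart, where the averaged system becomes a regular perturbation problem. The Step-1 remainder, after rescaling, is $\mathcal{O}(\eps^{\kappa})$ for some $\kappa>0$ once $N$ is large, and the $\mathcal{O}(\eps)$ gap between the true and averaged entry data becomes $\mathcal{O}(\eps^{1-p})$ in the rescaled variables. A variation-of-constants/Gronwall estimate over the $\mathcal{O}(1)$-length $\bar t$-interval then bounds the deviation of the two solutions within the chart; since both solutions are confined to an $\mathcal{O}(1)$-box there and the entry/exit transitions (handled with the standard Fenichel entry and exit charts of the blow-up) are Lipschitz, un-rescaling returns a deviation of size $\mathcal{O}(\eps^{p})$ in the original coordinates, uniformly on the whole $\mathcal{O}(\eps^{-p})$-interval. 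Splicing this with the normally hyperbolic estimate recovers the claimed bounds on all of $[0,\mathcal{O}(\eps^{-p})]$.

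The hard part will be the last step in the $p=\tfrac12$ cases, namely the toral folded node and toral FSN II. Inside the funnel the flow is only marginally stable --- the strong canard is a separatrix and nearby orbits are stretched --- so the Gronwall constant in the blown-up chart is not uniformly small, and an initial discrepancy of relative size $\eps^{1-p}$ can in the worst case be amplified up to size $\mathcal{O}(1)$ in the chart, i.e.\ $\mathcal{O}(\eps^{p})$ after un-rescaling; this is precisely what makes $\eps^{p}$, rather than $\eps$, the honest error scale there. Controlling this requires the detailed estimates for the inner (Weber-type) equation, and, crucially, verifying that those estimates persist under the non-autonomous, $\eps$-dependent remainder inherited from the averaging. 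The toral FSN I case is the secondary difficulty, since its blow-up --- and hence the $\eps^{1/4}$ scaling --- is itself more delicate, so the exponent there must be read off from that finer analysis rather than from an off-the-shelf argument.
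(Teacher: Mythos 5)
Your proposal follows essentially the same route as the paper's proof: reduce to the averaged radial-slow normal form of Theorem \ref{thm:averagingarbitraryslow}, use classical averaging plus Fenichel theory away from the fold, and then read off the exponent $p$ case by case from the known blow-up analyses of the corresponding classical folded singularities (folded node/saddle, FSN II, FSN I, and the regular jump point), which determine the $\mathcal{O}(\eps^{p})$ size of the neighbourhood in which the perturbed slow manifolds persist. The extra machinery you add (iterated near-identity transformations to high order, explicit Gronwall propagation through the rescaling chart) is a more detailed version of the same argument, not a different one.
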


\begin{proof}
We first sketch the proof in the classic case of a slow/fast system with a normally hyperbolic attracting manifold of limit cycles \cite{Berglund2006,Kuehn2015}, i.e., $\mathcal{P} = \mathcal{P}_a$. The averaging method produces a singularly perturbed slow/fast system with attracting critical manifold $\mathcal{S}_a$ (see Remark \ref{rem:hyperbolic}). Fenichel theory \cite{Fenichel1979,Jones1995} guarantees that $\mathcal{S}_a$ persists as an invariant slow manifold $\mathcal{S}_a^{\eps}$ of the averaged radial-slow system. The slow flow restricted to $\mathcal{S}_a^{\eps}$ is an $\mathcal{O}(\eps)$ perturbation of the reduced flow on $\mathcal{S}$. That is, the slow manifold has a local graph representation, $R = \hat{R}(u,\eps)$, where $\hat{R} = \mathcal{O}(\eps)$. Substituting this graph representation and Taylor expanding in powers of $\eps$, the averaged slow flow restricted to $\mathcal{S}_a^{\eps}$ is given by a system of the form
\[ \dot{u} = \eps \overline{G}_1(u)+ \eps^2 G_2(u,t) + \mathcal{O}\left( \eps^3 \right), \]
where $\overline{G}_1$ describes the leading order averaged dynamics and $G_2$ is periodic in time.
This slow flow on $\mathcal{S}_a^{\eps}$ is in the standard form for the classical averaging method \cite{Pontryagin1960,Sanders2007}. Thus, the full system trajectory and the averaged radial-slow flow are $\mathcal{O}(\eps)$ close for $\mathcal{O}(\eps^{-1})$ times on the fast time-scale. 

Suppose now that we have a folded manifold of limit cycles and consider the TFN case. By Theorem \ref{thm:averagingarbitraryslow}, the averaged radial-slow system has a folded critical manifold $\mathcal{S} = \mathcal{S}_a \cup \mathcal{L} \cup \mathcal{S}_r$. 
The blow-up technique \cite{Roussarie1993} extends Fenichel theory up to $\mathcal{O}(\sqrt{\eps})$ neighbourhoods of the fold curve $\mathcal{L}$. That is, $\mathcal{S}_a$ and $\mathcal{S}_r$, persist as invariant slow manifolds, $\mathcal{S}_a^{\sqrt{\eps}}$ and $\mathcal{S}_r^{\sqrt{\eps}}$, which are $\mathcal{O}(\sqrt{\eps})$ perturbations of $\mathcal{S}_a$ and $\mathcal{S}_r$. 
The slow flow restricted to $\mathcal{S}_a^{\sqrt{\eps}}$ and $\mathcal{S}_r^{\sqrt{\eps}}$ is a smooth $\mathcal{O}(\sqrt{\eps})$ perturbation of the reduced flow on $\mathcal{S}$ \cite{Krupa2010,Szmolyan2001}. 
Moreover, the solutions of the averaged radial-slow system that comprise $\mathcal{S}_a^{\sqrt{\eps}}$ and $\mathcal{S}_r^{\sqrt{\eps}}$ twist in the neighbourhood of the TFN as they pass the fold curve. These rotations are confined to an $\mathcal{O}(\sqrt{\eps})$ neighbourhood of the TFN (i.e., to the central chart of the blow-up; see \cite{Szmolyan2001,Wechselberger2005}).
Thus, for canard solutions of the averaged radial-slow flow, the classical averaging theorem \cite{Pontryagin1960,Sanders2007} applied to the slow flow on $\mathcal{S}_a^{\sqrt{\eps}}$ and $\mathcal{S}_r^{\sqrt{\eps}}$ gives the asymptotic error estimate. 

The result for the toral folded saddle is obtained in the same way. Note that for the toral folded saddle, the asymptotic error estimates only apply to torus canards of the toral folded saddle. Solutions that reach the manifold of SNPOs or toral faux canard solutions will have different asymptotic behaviour. We leave the analysis of toral faux canards to future work.

For the toral FSN I and toral FSN II, we again apply the same arguments to obtain the asymptotic estimates for validity of the averaging method. The different powers of $\eps$ come from the blow-up analysis of the classical FSN I \cite{Vo2015} and the classical FSN II \cite{Krupa2010}, respectively. 

Finally, we consider the case of a regular folded limit cycle (where $\rho_0 := \overline{a} \cdot \overline{g} \neq 0$). The averaging method produces a slow/fast system with folded critical manifold as before. 
Blow-up analysis of the flow past the fold $\mathcal{L}$ \cite{Krupa2001,Szmolyan2004} in the averaged radial-slow system gives the estimate $(R,u) = \mathcal{O}\left(\eps^{1/3} \right)$ for solutions in $S_a^{\eps}$ as they exit the central chart of the blow-up. Once again, the classical averaging theorem applied to the slow flow restricted to $\mathcal{S}_a^{\eps}$ gives the result.
\end{proof}

\section{Discussion}		\label{sec:discussion}

Torus canards are special solutions of slow/fast systems with at least two fast variables that alternately spend long times near attracting and repelling sets of limit cycles of the layer problem. Typically, the torus canards manifest as amplitude-modulated rhythms. They have been demonstrated to mediate the transition between tonic spiking and bursting states in several computational neural models, such as a cerebellar Purkinje cell model \cite{Kramer2008}, the Morris-Lecar-Terman, Hindmarsh-Rose and Wilson-Cowan-Izhikevich models \cite{Burke2012}, as well as in a model of synaptically coupled respiratory neurons in the pre-B\"{o}tzinger complex \cite{Roberts2015}. 

In slow/fast systems with only one slow variable, the torus canards are degenerate. They require one-parameter families of 2-fast/1-slow systems to be observed, and even then, they only occur on exponentially small parameter sets. The addition of a second slow variable makes the torus canards generic and robust, and therefore experimentally observable. The current approach in the literature to the study of torus canards is to analyse the average slow dynamics over limit cycles of the layer problem. Whilst these methods have, so far, led to reasonable conclusions about the dynamics, they are not rigorously justified. The averaging method \cite{Pontryagin1960} has only been developed for normally hyperbolic manifolds of limit cycles, whereas the torus canards occur in the neighbourhood of the manifold of SNPOs, where the averaging method breaks down. The primary aim of this article then was to develop a rigorous theoretical framework for the analysis of torus canards.

\subsection{Summary of Main Results} 	\label{subsec:summary}

In this article, we achieved three major results. First, we developed an extension of the averaging method for folded manifolds of limit cycles in slow/fast systems with two fast variables and $k$ slow variables, for any $k \geq 1$. We proved that the averaged radial-slow dynamics in the neighbourhood of a folded manifold of limit cycles is equivalent to the dynamics of a slow/fast system restricted to the neighbourhood of a folded critical manifold. By combining our averaging theory for folded manifolds of limit cycles with canard theory, we derived analytic criteria to identify and characterize torus canards based on a new class of singularities for differential equations: the toral folded singularities.  
These toral folded singularities encode the behaviour of the corresponding (non-singular) torus canards for $0< \eps \ll 1$ in the eigenvalues of the toral folded singularity itself, analogous to the way canards in $\mathbb{R}^3$ are characterized by their associated folded singularities.
We established that the torus canards of a toral folded node (TFN) are the result of three motions working in concert: (i) rapid oscillations due to limit cycles of the layer problem, (ii) net slow drift along the manifold of periodics towards the TFN, and (iii) amplitude-modulation due to canard dynamics in the envelope of the rapidly oscillating waveform. That is, the delayed passage properties past the manifold of SNPOs is inherited from canard dynamics on the envelope. 

The second major result was the discovery and analysis of amplitude-modulated subcritical elliptic bursting solutions, which alternate between epochs of rapid amplitude-modulated spiking, and quiescent periods. Using our torus canard theory, we showed that these novel AMB rhythms were torus canard-induced mixed-mode oscillations. The local mechanism that generated the amplitude-modulation was the TFN, which caused a local twisting of the invariant manifolds, $\mathcal{P}_a^{\eps}$ and $\mathcal{P}_r^{\eps}$, of limit cycles. The global return mechanism was the slow passage of trajectories through a delayed Hopf bifurcation, which allowed trajectories to escape the silent phase and return to one of the rotational sectors of $\mathcal{P}_a^{\eps}$ induced by the maximal torus canards. 

Our third main result was establishing the connection between our analysis and prior work on torus canards in the degenerate setting of one slow variable. We showed that the transition between tonic spiking and bursting in 2-fast/1-slow systems could be detected by simply tracking an appropriate singularity (the toral folded singularity) in parameter space. We illustrated our results in the Morris-Lecar-Terman, Hindmarsh-Rose and Wilson-Cowan-Izhikevich models for neural bursting. Hence, these results extend and explain some of those in \cite{Burke2012}.
Moreover, the torus canard theory allowed the determination of the parameter values for which torus canard explosions occur (Appendix \ref{app:TCexplosion}). 

\subsection{Relation to Classical Canards and Prior Studies of Torus Canards} 	\label{subsec:context}

To further place our work in context, we point out that until now, there were only two theoretical statements concerning torus canards. The first was the topological necessity of torus canards in $\mathbb{R}^3$, which follows from the continuous dependence of solutions on parameters \cite{Burke2012}. This continuous dependence shows that the only allowable homotopy from spiking to bursting solutions in 2-fast/1-slow systems is via the sequence of torus canards.
The other theoretical result is that torus canard explosion in single-frequency periodically driven slow/fast systems can be related to FSNs of type I \cite{Burke2015}. All other results concerning torus canards have been based on numerical averaging methods (for hyperbolic manifolds of limit cycles) and simulations. Thus, we have provided in this article an analytic scheme for the topological classification and characterization of torus canards, which we hope will pave the way for future analyses of torus canard-induced phenomena. 

Another consequence of our work is that we have established more explicit connections between the various families of canard solutions of slow/fast systems (Figure \ref{fig:schematic}). Canard theory has already shown that planar canard cycles always occur $\mathcal{O}(\eps)$ close to a singular Hopf bifurcation. The dynamic unfolding of the singular Hopf is the FSN II, which occurs in 1-fast/2-slow systems as a transcritical bifurcation of an ordinary singularity and a folded singularity \cite{Guckenheimer2008,Krupa2010}. The connection from degenerate torus canards to folded singularity canards was demonstrated to occur in a three time-scale forced van der Pol equation \cite{Burke2015} via a FSN of type I. More precisely, the strong canard of the FSN I for low-frequency forcing would continue to the maximal torus canard of the high-frequency forcing regime. 

\begin{figure}[ht]	
\centering
\includegraphics[width=4.5in]{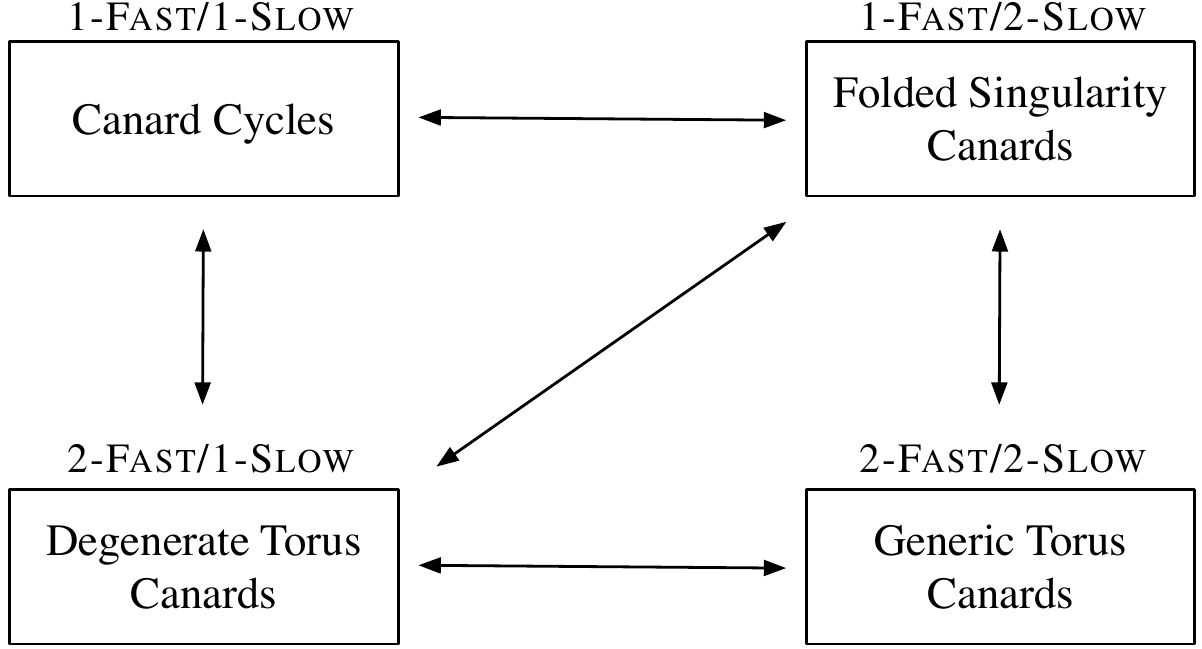}
\caption{Schematic of the various canard families and their connections. Canard cycles occur in planar slow/fast systems via a singular Hopf bifurcation. The dynamic unfolding of the singular Hopf is the FSN II, which occurs in 1-fast/2-slow systems. In single-frequency periodically driven slow/fast systems with one fast variable, one slow variable, and one intermediate variable, the strong canard of the FSN I in the low-frequency forcing regime becomes a maximal torus canard in the high-frequency forcing regime. Our averaging method on folded manifolds of limit cycles has shown that a torus canard problem can be converted into a classical canard problem and vice versa.}
\label{fig:schematic}
\end{figure}

Degenerate torus canards were (tenuously) suggested to be related to canard cycles via averaging \cite{Izhikevich2000b}, and generic torus canards were conjectured to be related to folded singularity canards via averaging \cite{Roberts2015}. Our work here has established the precise nature of those connections. Namely, a generic torus canard can be reduced to a folded singularity canard via the averaging method for folded manifolds of limit cycles (Theorems \ref{thm:averaging} and \ref{thm:averagingarbitraryslow}). That is, a canard solution of the averaged radial-slow system corresponds to a torus canard solution in the original problem.

Conversely, a folded singularity canard can be (trivially) converted to a torus canard by interpreting the fast variable as a radial variable and appending a fast (decoupled) angular variable to the system. Similarly, canard cycles of planar slow/fast systems and degenerate torus canards of 2-fast/1-slow systems can be related by interpreting the fast variable as a radial variable or vice versa. Since the degenerate torus canards are canard cycles of a related averaged radial-slow system, they can be related to generic torus canards via the toral FSN II. 

\subsection{Open Problems} 	\label{subsec:problem}

Our analysis of generic torus canards has revealed several interesting, open, and relevant problems. First and foremost, the major limitation of our averaging method for folded manifolds of limit cycles is that it has only been developed for systems with two fast variables. 
We currently have no analytic results for systems with more than two fast variables. The first major roadblock is obtaining a test function to detect an SNPO. In the setting of two fast variables, the average of the divergence of the layer problem is a suitable test function for the detection of SNPOs, i.e.,
\begin{equation} \label{eq:snpocondition} 
\frac{1}{T(y)} \int_0^{T(y)} \operatorname{tr} D_x f(\Gamma(t,y),y,0) \, dt = 0, 
\end{equation}
indicates a fold of limit cycles.
However, when there are $n \geq 3$ fast variables, there is no equivalent expression that can be used as a test function for an SNPO. More precisely, the $n$ Floquet exponents of the limit cycle $\Gamma$ of the layer problem satisfy
\[ \varphi_1 = 0, \quad \text{and } \quad \sum_{i=2}^n \varphi_i = \frac{1}{T(y)} \int_0^{T(y)} \operatorname{tr}\, D_x f(\Gamma(t,y),y,0) \, dt, \]
where the trivial exponent $\varphi_1$ indicates neutral stability to shifts along $\Gamma$. A fold of limit cycles occurs when one of the other Floquet exponents is zero, i.e., when $\varphi_i=0$ for some $i \in \{2, 3, \ldots, n\}$. In this higher dimensional setting, it is not clear if the fold condition for limit cycles ($\varphi_i = 0$) has a continuous analogue like \eqref{eq:snpocondition}. 
The second (and related) issue is that, to the best of our knowledge, a method still needs to be found to compute a unit normal to the folded limit cycle $\Gamma$ which would allow us to obtain a result analogous to Lemma \ref{lemma:linear}, which the proofs of Theorems \ref{thm:averaging} and \ref{thm:averagingarbitraryslow} rely on. 

We conjecture that there exists a sequence of coordinate transformations that switches the dynamics to a coordinate frame tangent and orthogonal to the limit cycles such that the resulting averaged system has linear part with the Floquet exponents along the main diagonal. Assuming that such a transformation exists and that the only zero Floquet exponents are $\varphi_1$ and $\varphi_i$ for some $i\in \{2,3,\ldots,n \}$, center-manifold reduction would recover the case studied herein. As such, whilst we cannot as yet make any theoretical statements, we are hopeful that generic torus canards in systems with additional fast directions will have envelope profiles which obey canard dynamics on average. Currently, there is numerical evidence for this in a 6-fast/2-slow model for respiratory rhythm generation in the pre-B\"{o}tzinger complex \cite{Roberts2015}. 

The next major issue is the question of how a bifurcation of torus canards occurs. In our formulation, we converted the torus canard problem to a canard (of folded singularity type) problem. In this averaged radial-slow setting, a bifurcation of maximal torus canards occurs when a branch of secondary torus canards emanates from the weak torus canard at odd integer resonances in the eigenvalue ratio. That is, under variation of the control parameter, the invariant manifolds of limit cycles twist in such a way that a new torus canard splits off from the axis of rotation. We provided numerical evidence for such behaviour in the Politi-H\"{o}fer model, but did not investigate it closely. The behaviour of the invariant manifolds of limit cycles warrants further investigation, since the ability to identify invariant manifolds, particularly repelling ones, is crucial to understanding the properties (such as resetting and firing) of a system.  

Our numerical computation of the intersecting twisted invariant manifolds, $\mathcal{P}_a^{\eps}$ and $\mathcal{P}_r^{\eps}$, of limit cycles was novel but limited.
The homotopic continuation methods \cite{Desroches2008} that are currently used to compute invariant slow manifolds and continue maximal canards of folded singularities will not work for torus canards. One issue is that the first step of the homotopic continuation method requires a known solution of the rescaled system
\begin{equation}	\label{eq:discussion}
\begin{split}
\dot{x} &= T f(x,y,\eps), \\
\dot{y} &= \eps T g(x,y,\eps), 
\end{split}
\end{equation}
subject to $u(0) \in \mathcal{P}_L$ and $u(1) \in \Sigma$, where $u=(x,y)$, $T$ is the actual integration time (so that all solutions are rescaled to the unit time interval), and $\Sigma$ is a hyperplane passing through the toral folded singularity. 
Unlike the classical folded singularities, the toral folded singularity cannot be used as a starting solution for this boundary value problem. As such, the computation of the invariant manifolds of limit cycles in AUTO cannot be done. Consequently, there is currently no way to continue the maximal torus canards in parameters, which is essential for the detection of their bifurcations. We propose one possible modification to begin the continuation: use the toral folded singularity as a solution for the layer problem ($\eps=0$) of \eqref{eq:discussion} and then numerically continue in $\eps$ and $T$ to homotopically grow a suitable starting solution of \eqref{eq:discussion}.

Another related issue that we touched on was the detailed bifurcation structure of the Politi-H\"{o}fer model, especially in the transitions between different AMBs (i.e., bifurcations of torus canard-induced mixed-mode oscillations). Two mechanisms control the number of small-oscillations that the envelope of the AMB waveform executes. The eigenvalue ratio of the TFN determines the maximal number of torus canards that can persist, and the global return of the AMB determines where the orbit is re-injected in $\mathcal{P}_a^{\eps}$ relative to the maximal torus canards. Thus, the loss/gain of an oscillation in the envelope of the waveform can occur either as the eigenvalue ratio crosses an odd integer resonance, or as the global return projects the orbit into a different rotational sector. Whilst we studied the dependence of the eigenvalue ratio on the control parameter, we did not examine the global return in any detail. As such, we have not yet been able to make any useful comparison between the singular and non-singular bifurcation structures. We conjecture that the transition between AMBs with different numbers of oscillations in the envelope will correspond to torus doubling bifurcations, analogous to the way that bifurcations of canard-induced mixed-mode oscillations tend to occur via period-doubling bifurcations \cite{Desroches2012a,Petrov1992,Wechselberger2009}. 

One special type of toral folded singularity that we encountered was the toral FSN II, where a toral folded singularity coincides with an equilibrium of the averaged radial-slow flow. We argued that since the toral FSN II corresponds to a singular Hopf bifurcation of the averaged radial-slow system (from which limit cycles would emanate), the toral FSN II must mark the moment when an invariant phase space torus is created. Thus, we conjectured that a toral FSN II would persist in the fully perturbed problem as a singular torus bifurcation, at an $\mathcal{O}(\eps)$-distance from the toral FSN II. We provided numerical evidence for our conjecture in the Politi-H\"{o}fer, Morris-Lecar-Terman, Hindmarsh-Rose, and Wilson-Cowan-Izhikevich models. However, we have yet to rigorously prove our conjecture. The analysis of the toral FSN II is significant, since it is the boundary between amplitude-modulated spiking and AMB. 

\subsection{Outlook} 	\label{subsec:outlook}

In this article, we provided the first rigorous framework for studying generic torus canards in slow/fast systems with two fast and $k$-slow variables, for any $k \geq 1$. We demonstrated the predictive power of our analytic tools, and discovered new dynamic phenomena (AMB due to torus canard-induced mixed-mode oscillations) in the process. 
We showed that these new phenomena are robust and are therefore expected to be experimentally observable. A possible example where AMB may have been observed experimentally is in leech heart interneurons \cite{Cymbalyuk2002}.
Our analysis has shown that there is still of whole vista of possibilities to be explored in terms of the theoretical development, numerical implementation, and practical applications of torus canards. We believe that the analysis of generic torus canards and their interactions with other dynamic phenomena will lead to rich, complicated, and important new dynamics.

\begin{appendices}

\section{Dimensional Analysis of the Politi-H\"{o}fer Model}		\label{app:PH}

In this appendix, we provide details of the non-dimensionalization of the PH model \eqref{eq:PHmodel}. We define dimensionless variables $C, C_t, P$, and $t_s$ via the rescalings
\[ c = Q_c C, \quad c_t = Q_c C_t, \quad p = Q_p P, \,\, \text{ and }\,\, t = Q_t \, t_s, \]
where $Q_c$ and $Q_p$ are reference calcium and \ip3 concentrations, respectively, and $Q_t$ is a reference time-scale. Substitution into \eqref{eq:PHmodel} leads to the dimensionless system
\begin{align*}
\dot{C} &= \frac{Q_t V_{\text{serca}} \gamma}{Q_c} \left( \hat{J}_{\text{release}}-\frac{1}{\gamma} \hat{J}_{\text{serca}} \right)+\frac{Q_t \delta V_{\text{pm}}}{Q_c} \left( \hat{J}_{\text{in}}-\hat{J}_{\text{pm}} \right), \\
\dot{C_t} &= \frac{Q_t \delta V_{\text{pm}}}{Q_c} \left( \hat{J}_{\text{in}}-\hat{J}_{\text{pm}} \right), \\
\dot{r} &= \frac{Q_t}{\tau_r} \left( 1- r \frac{K_i+Q_c C}{K_i} \right), \\
\dot{P} &= Q_t k_{3K} \left( \frac{V_{PLC}}{Q_p} - \frac{1}{1+\frac{K_{3K}^2}{Q_c^2 C^2}} P \right),
\end{align*}
where the overdot denotes the derivative with respect to the dimensionless slow time $t_s$, and the dimensionless fluxes are given by
\begin{equation}
\begin{split}
\hat{J}_{\text{release}} &= \frac{Q_c k_1}{V_{\text{serca}}} \left( \left( r \frac{1}{1+ \frac{K_a}{Q_c C}} \, \frac{1}{1+\frac{K_p}{Q_p P}} \right)^3 + \frac{k_2}{k_1} \right)  \left( C_t - \left( 1+\frac{1}{\gamma} \right) C \right), \\
\hat{J}_{\text{serca}} &= \frac{1}{1+ \frac{K_{\text{serca}}^2}{Q_c^2 C^2}}, \\
\hat{J}_{\text{pm}} &= \frac{1}{1+ \frac{K_{\text{pm}}^2}{Q_c^2 C^2}}, \\
\hat{J}_{\text{in}} &= \frac{\nu_0}{V_{\text{pm}}} + \phi \frac{V_{PLC}}{V_{\text{pm}}}.
\end{split}
\end{equation}
Setting the reference time-scale to be the slow time-scale ($Q_t=T_{c_t}$), and defining the dimensionless parameters
\[ \eps = \frac{\delta V_{\text{pm}}}{V_{\text{serca}} \gamma}, \,\, \hat{\tau}_r = \tau_r \frac{V_{\text{serca}} \gamma}{Q_c}, \,\, \text{ and }\,\, \hat{k}_{3K} = \frac{k_{3K}Q_c}{\delta V_{\text{pm}}}, \]
leads to the dimensionless PH model \eqref{eq:PHdimless}, where the functions $f_1,f_2,g_1$ and $g_2$ are given by
\begin{align*}
f_1(C,r,C_t,P) &= \hat{J}_{\text{release}}-\frac{1}{\gamma} \hat{J}_{\text{serca}}, \\
f_2(C,r,C_t,P) &= \frac{1}{\hat{\tau}_r} \left( 1-r \frac{K_i+Q_c C}{K_i} \right), \\
g_1(C,r,C_t,P) &= \hat{J}_{\text{in}}-\hat{J}_{\text{pm}}, \\
g_2(C,r,C_t,P) &= \hat{k}_{3K} \left( \frac{V_{PLC}}{Q_p} - \frac{P}{1+\frac{K_{3K}^2}{Q_c^2 c^2}} \right).
\end{align*}
%
Details of the parameters and their standard values are provided in Table \ref{tab:PHparams}.

\begin{table}[ht!]
\centering
\begin{tabular}{|l|l|l|}
\hline 
Parameter & Value & Description \\
\hline
$K_{3K}$ & 0.4 $\mu$M & Half-maximal concentration constant \\
$k_{3k}$ & 0.1 s$^{-1}$ & \ip3 phosphorylation rate constant \\
$\gamma$ & 5.405 & Ratio of cytosolic volume to endoplasmic reticulum volume \\
$V_{\text{serca}}$ & 0.25 $\mu$M s$^{-1}$ & Maximal SERCA pump rate \\
$K_{\text{serca}}$ & 0.1 $\mu$M & Half-activation constant \\
$V_{\text{pm}}$ & 0.01~$\mu$M s$^{-1}$ & Maximal PMCA pump rate \\ 
$K_{\text{pm}}$ & 0.12 $\mu$M & Half-activation constant \\ 
$\delta$ & 0.472938 & Strength of plasma membrane fluxes \\
$\nu_0$ & 0.001 $\mu$Ms$^{-1}$ & Constant calcium influx \\
$\phi$ & 0.045 s$^{-1}$ & Stimulation-dependent influx \\
$k_1$ & 7.4 s$^{-1}$ & Maximal rate of Ca$^{2+}$ release \\
$k_2$ & 0.00148 s$^{-1}$ & Ca$^{2+}$ leak \\
$K_a$ & 0.2 $\mu$M & Ca$^{2+}$ binding to activating site \\
$K_i$ & 0.3 $\mu$M & Ca$^{2+}$ binding to inhibiting site \\
$K_p$ & 0.13 $\mu$M & \ip3 binding \\
$\tau_r$ & 6.6 s & Characteristic time of \ip3 receptor inactivation \\
\hline
\end{tabular}
\caption{Standard parameter values for the PH model. The values here are identical to those used in \cite{Harvey2011,Politi2006}, except for the values of $\nu_0$ and $\delta$, which are set at $\nu_0 = 0.0004~\mu$Ms$^{-1}$ and $\delta=1$ in \cite{Harvey2011,Politi2006}. Our $\delta$ is chosen so that the dimensionless small parameter $\eps$ will be `nice' (see Section \ref{subsec:PHlayer}).}
\label{tab:PHparams}
\end{table}

\section{Numerical Continuation of Toral FSNs of Type II} 	\label{app:TFScurve}

Here, we outline the procedure for computing (in AUTO \cite{Doedel2007}) sets of toral FSNs of type II in two-parameter families of 2-fast/2-slow systems of the form
\begin{align*}
\dot{x} &= f(x,y,\mu,\eps), \\
\dot{y} &= \eps g(x,y,\mu,\eps),
\end{align*}
where $x = (x_1,x_2) \in \mathbb{R}^2$ is fast, $y = (y_1,y_2) \in \mathbb{R}^2$ is slow, $\mu = (\mu_1,\mu_2) \in \mathbb{R}^2$ is a vector of parameters, $0< \eps \ll 1$, and $f = (f_1,f_2)$ and $g=(g_1,g_2)$ are sufficiently smooth. 

We first reformulate the layer problem as a boundary value problem subject to periodic boundary conditions 
\begin{equation} \label{eq:BVP}
\dot{x} = T f(x,y,\mu,0), \quad x(0)=x(1). 
\end{equation}
Here, solutions have been rescaled to the unit time interval and the actual integration time $T$ appears as an explicit parameter. For any periodic orbit, $\Gamma$, of this boundary value problem, there is an infinite family of periodic solutions corresponding to $\Gamma$ but starting at different initial points \cite{Doedel1991}. We pick out one of these solutions by appending a phase condition 
\begin{equation} \label{eq:phase}
x_1(0)=x_{10}, 
\end{equation}
where the cross-section $\{ x_1=x_{10}\}$ is chosen such that $\Gamma$ passes through $\{ x_1=x_{10} \}$. A stable limit cycle, $\Gamma$, of the layer problem is a solution of \eqref{eq:BVP} subject to \eqref{eq:phase}.

An important diagnostic that needs to be monitored is the averaged slow drift, $\overline{g}_1$ and $\overline{g}_2$, given by 
\[ \overline{g}_1 = \int_0^1 g_1(\Gamma,y,\mu,0)\, dt, \quad \text{ and }\quad \overline{g}_2 = \int_0^1 g_2(\Gamma,y,\mu,0)\, dt, \]
respectively. In particular, $\overline{g}_1$ and $\overline{g}_2$ must be computed for the stable limit cycle $\Gamma$. 
To initialize the computation of toral folded singularities, we import $\Gamma$ and these initial values for $\overline{g}_1$ and $\overline{g}_2$ into AUTO. 
We then numerically continue the periodic $\Gamma$ in one of the slow variables, $y_1$ say, and the integration time $T$ until a fold of limit cycles is detected. At the SNPO, we have
\begin{equation} \label{eq:bvpsnpo}
\int_0^1 \operatorname{tr}D_x f(\Gamma,y,\mu,0)\, dt=0,
\end{equation}
which is a necessary condition for an SNPO but not sufficient. 
We then compute a family of folded limit cycles by numerically continuing solutions of \eqref{eq:BVP} subject to the phase condition \eqref{eq:phase} and the integral condition \eqref{eq:bvpsnpo} in the parameters $(y_1,y_2,T)$ until 
\begin{equation} \label{eq:bvpaveqm1}
\overline{g}_1 = \int_0^1 g_1(\Gamma,y,\mu,0)\, dt=0.
\end{equation}
We then continue solutions of the boundary value problem \eqref{eq:BVP} subject to \eqref{eq:phase}, \eqref{eq:bvpsnpo}, and \eqref{eq:bvpaveqm1} in the parameters $(y_1,y_2,\mu_1,T)$ until 
\begin{equation} \label{eq:bvpaveqm2}
\overline{g}_2 = \int_0^1 g_2(\Gamma,y,\mu,0)\, dt=0,
\end{equation}
and we have a toral FSN II. The manifold of toral FSNs of type II is then obtained by enforcing the integral condition \eqref{eq:bvpaveqm2}, and continuing solutions in the parameters $(y_1,y_2,\mu_1,\mu_2,T)$. 

Thus, the curve of toral FSNs of type II is obtained by solving the periodic boundary value problem \eqref{eq:BVP} and \eqref{eq:phase}, subject to the integral constraints \eqref{eq:bvpsnpo}, \eqref{eq:bvpaveqm1}, and \eqref{eq:bvpaveqm2}. Note that this numerical continuation procedure generalizes to the computation and continuation of toral FSNs of type II in 2-fast/$k$-slow systems for any $k \geq 1$.

\section{Maximal Torus Canards in $\mathbb{R}^3$} 	\label{app:TCexplosion}

In this Appendix, we provide an extension of the averaging theorem which allows one to determine the parameter values for which torus canard explosions occur in 2-fast/1-slow systems. The assumptions are identical to those of Section \ref{subsec:assumptions}, with $y \in \mathbb{R}$. Under those conditions, we have the following refinement of the averaging theorem.

\begin{theorem}[\bf{Extended Averaging Transformation}] \label{thm:3Daveraging}
Consider system \eqref{eq:main} with $x\in \mathbb{R}^2$ and $y\in \mathbb{R}$, under assumptions \ref{ass:man}--\ref{ass:homoclinic}, and let $(\Gamma(t,y),y) \in \mathcal{P}_L$. Then there exists a sequence of near-identity coordinate transformations such that the averaged radial-slow dynamics in the neighbourhood of $(\Gamma(t,y),y)$ are described by
\begin{equation} \label{eq:3Daveraging}
\begin{split}
\dot{R} &= \overline{a} u + \overline{b} R^2 + \overline{c} Ru + \overline{\xi} R^3 + \mathcal{O}(\eps, R^2u, u^2), \\
\dot{u} &= \eps \left( \overline{g} + \overline{d} R + \overline{e} u + \overline{\eta} R^2 +\mathcal{O}(\eps, Ru,u^2) \right),
\end{split}
\end{equation}
where an overbar denotes an average over one period of $\Gamma(t,y)$, and the averaged coefficients can be computed explicitly. 
\end{theorem}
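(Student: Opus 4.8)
The plan is to follow the proof of Theorem~\ref{thm:averagingarbitraryslow} in the special case $k=1$, but to carry every Taylor expansion one order further so that a cubic term in the radial equation and a quadratic term in the slow equation survive the averaging; these are exactly the coefficients one needs to put the averaged planar system into a van der Pol--type normal form for a canard-explosion analysis. Without loss of generality place the folded limit cycle at $y=0$ and call it $\Gamma_0(t)$. First I would apply the moving-frame change of variables \eqref{eq:xytrans}, $x = \Gamma_0(t) + r\,q(t,0)$ and $y = u + \eps w(t,u,r)$, substitute into \eqref{eq:main}, and Taylor expand $f$ and $g$ to cubic order in $(r,u)$ rather than quadratic. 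Projecting the fast equation onto the unit normal $q$ isolates the scalar radial evolution; Lemma~\ref{lemma:linear} shows the coefficient of the linear $r$-term has zero average along $\Gamma_0$, so introducing the $T$-periodic fundamental solution $\Phi$ and setting $r = \Phi\tilde r$ removes that term exactly, just as in the original proof. At this stage the radial equation has the form $\dot{\tilde r} = b(t)\tilde r^2 + a(t)u + \tilde c(t)\tilde r u + \xi(t)\tilde r^3 + \mathcal{O}(\eps,\tilde r^2 u, u^2)$, where $\xi(t)$ is an explicitly computable $T$-periodic coefficient built from the third $x$-derivatives of $f$ contracted with $q$ and multiplied by the appropriate power of $\Phi$, while the slow equation picks up a term $\eta(t)\tilde r^2$ with $\eta(t)$ built from the second $x$-derivatives of $g$ and $\Phi^2$.

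Next I would use the near-identity transformation \eqref{eq:NIT} augmented by the monomials relevant at this order, $\tilde r = R + \alpha u + \beta R^2 + \gamma Ru + \sigma R^3 + \cdots$, together with an expansion of $w$ as a power series in $(R,u)$ that now includes a quadratic-in-$R$ term. Substituting and collecting powers of $R$ and $u$ produces homological equations for $\alpha,\beta,\gamma,\sigma$ and for the Taylor coefficients of $w$; each right-hand side is a known $T$-periodic function minus its own mean, hence has a bounded $T$-periodic primitive, exactly as in Theorem~\ref{thm:averagingarbitraryslow}. As there, the means of the quadratic radial coefficient, the linear slow coefficient, and---crucially here---the cubic radial coefficient cannot be removed, since subtracting their averages would force the corresponding near-identity terms to grow over the $\mathcal{O}(\eps^{-1})$ time-scale of interest. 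Averaging over $\Gamma_0$ at the end leaves precisely \eqref{eq:3Daveraging}, and the explicit formulas for the averaged coefficients are recovered by unwinding the transformations as in Section~\ref{subsec:averagedcoefficients}.

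I expect the main difficulty to be bookkeeping rather than anything conceptually new: the coefficient that finally multiplies $R^3$ is not the average of the bare $\xi(t)$ above. The quadratic near-identity term $\beta R^2$ feeds back through $b(t)\tilde r^2$ to contribute $2b(t)\beta(t)R^3$, the mixed term $\tilde c(t)\tilde r u$ interacts with $\alpha(t)u$ and $\beta(t)R^2$ to contribute further cubic terms, and the $\Phi$-rescaling reweights all of them; likewise the slow coefficient $d(t)R$ interacts with $\beta(t)R^2$ to add a $d(t)\beta(t)$-type piece to $\eta(t)$. One must assemble these composite coefficients, verify that their fluctuating parts are again of the form (known $T$-periodic function) minus (mean) so that they can be absorbed by the bounded periodic corrections $\sigma(t)$ and the quadratic-in-$R$ part of $w$, and read off $\overline{\xi}$ and $\overline{\eta}$ as the surviving means. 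Checking that the iterative averaging scheme of Theorem~\ref{thm:averagingarbitraryslow} still closes after this extra step---i.e., that no secular terms are generated at cubic order---is the only point requiring genuine care.
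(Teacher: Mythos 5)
Your proposal is correct and follows essentially the same route as the paper, whose own proof simply carries the expansions of Theorems \ref{thm:averaging} and \ref{thm:averagingarbitraryslow} one order higher and records the resulting formulas for $\overline{\xi}$ and $\overline{\eta}$ (the latter indeed containing the $\beta d$ cross term you identify). The only caveat is your claim that the final $R^3$ coefficient differs from the average of the bare cubic coefficient: the feedback $2b\beta R^3$ from the $\beta R^2$ term combines with the chain-rule contribution $-2\beta\bigl(b-\tfrac{d\beta}{dt}\bigr)R^3$ to leave $2\beta\tfrac{d\beta}{dt}=\tfrac{d}{dt}\beta^2$, a total derivative with zero mean, so $\overline{\xi}$ is in fact just the bare average $\frac{1}{6T}\int_0^T q\cdot\bigl((q\cdot\nabla_x)^3 f_1,(q\cdot\nabla_x)^3 f_2\bigr)^T\Phi^2\,dt$, consistent with the paper.
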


\begin{proof}
The proof follows in the same way as that of Theorems \ref{thm:averaging} and \ref{thm:averagingarbitraryslow}, but requires that we expand to higher-order in both the radial and slow directions. The averaged coefficient of the cubic radial term in the radial equation is given by 
\[ \overline{\xi} = \frac{1}{6T} \int_0^T q \cdot \begin{pmatrix} (q \cdot \nabla_x)^3 f_1 \\ (q \cdot \nabla_x)^3 f_2 \end{pmatrix} \Phi^2(t) \, dt, \]
where $q$ is a unit normal to the periodic orbit $\Gamma$ and $\Phi(t)$ is the fundamental solution of the linearized flow about $\Gamma$. The averaged coefficient of the $R^2$-term in the averaged slow equation is
\[ \overline{\eta} = \frac{1}{T} \int_0^T \left( (q\cdot D_x^2 g\, q)\Phi^2(t) + \beta d \right) \, dt, \]
where $\beta$ is the coefficient of the $R^2$ term in the near-identity transformation \eqref{eq:NIT} and is chosen to be the solution of
\[ \frac{d\beta}{dt} = b - \frac{1}{T} \int_0^T b\, dt, \quad \beta(0)=0, \]
and $d$ is the coefficient of the linear $R$-term in the $u$-equation. All other averaged coefficients are exactly as in Section \ref{subsec:averagedcoefficients}.
\end{proof}

\begin{remark}
We point out that only the average of $g$ over the folded limit cycle is needed to determine where the toral folded singularity occurs. The higher order terms in system \eqref{eq:3Daveraging} are necessary for tracking the maximal torus canard in its $\eps$-unfolding. 
\end{remark}

\begin{corollary}	\label{cor:explosion}
Suppose system \eqref{eq:3Daveraging} possesses a toral folded singularity, i.e. a limit cycle $\Gamma(t,y)$ of the layer problem of \eqref{eq:main} such that 
\[ \int_0^T \operatorname{tr}\, D_xf(\Gamma(t,y),y,0) \, dt = 0, \,\, \text{ and }\,\, \int_0^T g(\Gamma(t,y),y,0) \, dt = 0. \]
Suppose further that the averaged coefficients of Theorem \ref{thm:3Daveraging} are such that $\overline{a} \overline{d} <0$. 
Then, for $0<\eps \ll 1$, there is a torus canard explosion in an exponentially small neighbourhood of the parameter set such that
\[ \int_0^T g(\Gamma(t,y),y,0)\,dt = \overline{\lambda} \eps + \mathcal{O}\left( \eps^{3/2} \right), \]
where $\overline{\lambda}$ is given by 
\[ \overline{\lambda} := \frac{\overline{d}}{8\overline{b}^3} \,  \left( \overline{b}\overline{c}\overline{d}+2\overline{b}^2 \overline{e}+2\overline{a}\overline{b}\overline{\eta}-3\overline{a}\overline{d}\overline{\xi} \right) . \]
\end{corollary}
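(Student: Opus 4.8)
The plan is to reduce the statement to the classical canard-explosion result for planar slow/fast systems. By Theorem \ref{thm:3Daveraging}, in a neighbourhood of the folded limit cycle the averaged radial-slow dynamics are governed by the two-dimensional system \eqref{eq:3Daveraging} with one fast variable $R$ and one slow variable $u$, and the hypotheses $\int_0^T \operatorname{tr}\, D_x f\, dt = 0$ and $\int_0^T g\, dt = 0$ say precisely that $(R,u)=(0,0)$ is a canard point of \eqref{eq:3Daveraging} in the singular limit: the critical manifold $\{ \overline{a} u + \overline{b} R^2 + \cdots = 0\}$ is a (locally) parabolic curve with a nondegenerate fold at the origin (since $\overline{b}\neq 0$ by Assumption \ref{ass:fold}), and the slow flow is nonzero at that fold point so the origin is a regular point of the desingularized flow. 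The condition $\overline{a}\,\overline{d} < 0$ is the standard orientation/nondegeneracy condition guaranteeing that the canard point is of the type from which a canard explosion emanates (it ensures the slow nullcline crosses the fold transversally in the appropriate direction). A torus canard explosion in the original system \eqref{eq:main} corresponds, under the averaging, exactly to a canard explosion in \eqref{eq:3Daveraging}.

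The main body of the argument is then to apply the known canard-explosion theorem (Krupa--Szmolyan, as used throughout the paper via \cite{Szmolyan2001}) to \eqref{eq:3Daveraging}, treating $\int_0^T g\,dt$ as the bifurcation parameter. First I would put \eqref{eq:3Daveraging} into the Krupa--Szmolyan normal form for a canard point by a near-identity transformation and a rescaling: translate the fold to the origin, use the implicit function theorem to write the critical manifold as a graph, and rescale $R$, $u$, and time so the quadratic coefficient and the leading slow coefficient are normalized. The canard-explosion theorem then gives that maximal canards (here, maximal torus canards) exist for the bifurcation parameter in an exponentially thin interval around a critical value, and it provides an asymptotic expansion of that critical value in powers of $\sqrt{\eps}$ whose first-order coefficient is an explicit rational function of the normal-form coefficients. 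The remaining work is bookkeeping: express that universal coefficient in terms of $\overline{a},\overline{b},\overline{c},\overline{d},\overline{e},\overline{\xi},\overline{\eta}$ by tracking how each appears under the normalizing transformation, and verify that it collapses to
\[
\overline{\lambda} = \frac{\overline{d}}{8\overline{b}^3}\left( \overline{b}\,\overline{c}\,\overline{d} + 2\overline{b}^2\overline{e} + 2\overline{a}\,\overline{b}\,\overline{\eta} - 3\overline{a}\,\overline{d}\,\overline{\xi} \right).
\]
Finally, I would translate back: a canard of \eqref{eq:3Daveraging} is (by Theorem \ref{thm:3Daveraging} and the averaging error estimates of Theorem \ref{cor:error}, with $p = 1/2$ in the FSN/canard-point case) an $\mathcal{O}(\sqrt{\eps})$-accurate shadow of a torus canard of \eqref{eq:main}, so the exponentially thin parameter window persists, with the stated $\mathcal{O}(\eps^{3/2})$ correction coming from the next order in the canard-explosion expansion.

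The hard part will be the explicit matching of coefficients: one must carry enough higher-order terms — the cubic radial term $\overline{\xi}R^3$ and the quadratic slow term $\overline{\eta}R^2$, which is exactly why Theorem \ref{thm:3Daveraging} was stated with those extra terms — through the successive near-identity changes of coordinates and the blow-up rescaling without dropping or mislabelling a contribution, since the first-order coefficient in the canard-explosion expansion is genuinely sensitive to all of them. A secondary subtlety is confirming that the averaging transformation of Theorem \ref{thm:3Daveraging}, which is only a \emph{formal} near-identity transformation, is nonetheless accurate to the order needed so that the true torus canard inherits the location of the canard of the averaged system to within the claimed $\mathcal{O}(\eps^{3/2})$; this is where the error estimates behind Theorem \ref{cor:error} must be invoked carefully, since the canard value itself sits on an exponentially thin interval and one needs the \emph{algebraic} (not exponential) part of the expansion to be reproduced faithfully.
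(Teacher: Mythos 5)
Your strategy is sound and would reach the stated formula, but it is executed differently from the paper. You propose to normalize \eqref{eq:3Daveraging} to the Krupa--Szmolyan canard-point normal form, cite the canard-explosion theorem for the existence of the exponentially thin window and the universal first-order coefficient, and then recover $\overline{\lambda}$ by tracking how $\overline{a},\overline{b},\overline{c},\overline{d},\overline{e},\overline{\xi},\overline{\eta}$ transform under the normalization. The paper instead skips the normalization and citation step: it applies the blow-up transformation $(R,u,\overline{g},\eps)=(\rho\tilde R,\rho^2\tilde u,\rho^2\tilde\lambda,\rho^2\tilde\eps)$ directly to the non-normalized averaged system, passes to the rescaling chart to obtain \eqref{eq:blownup}, observes that the $\eps=0$ limit \eqref{eq:unperturbed} is Hamiltonian with the explicit parabolic solution $\gamma(t_2)$, and computes the splitting distance between the attracting and repelling slow manifolds as a Melnikov integral of the $\mathcal{O}(\sqrt{\eps})$ perturbation (which is exactly where $\overline{c}$, $\overline{\xi}$, $\overline{e}$, $\overline{\eta}$, and $\overline{\lambda}$ enter) along $\gamma$; solving $\mathcal{D}=0$ for $\lambda_2$ and blowing down gives $\overline{\lambda}$ in closed form. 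Note also that in the paper the hypothesis $\overline{a}\,\overline{d}<0$ plays a concrete technical role — it is the condition for the orbits of the Hamiltonian limit to be closed and for the Melnikov integral to converge — rather than only the qualitative orientation condition you describe. Your route buys the ability to quote an off-the-shelf theorem at the price of a delicate and error-prone coefficient-matching computation through the normalizing transformation; the paper's route requires redoing the blow-up/Melnikov analysis but produces each contribution to $\overline{\lambda}$ as an explicit convergent integral, which is easier to verify term by term. Your closing remarks about needing the extra terms $\overline{\xi}R^3$ and $\overline{\eta}R^2$ from Theorem \ref{thm:3Daveraging}, and about the formal nature of the averaging transformation versus the $\mathcal{O}(\eps^{3/2})$ accuracy claim, correctly identify the genuine subtleties; the paper handles the first by carrying those terms into the Melnikov integrand and is likewise only schematic about the second.
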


\begin{proof}
The averaged system \eqref{eq:3Daveraging} is a planar slow/fast system with effective control parameter $\overline{g}$, and canard point located at $(R,u,\overline{g})=(0,0,0)$. Careful analyses of such problems have been performed for instance in \cite{Krupa2001}. Therefore, we only sketch an outline of the steps.

In the first step, we apply the blow-up transformation
\[ \left( R, u, \overline{g}, \eps \right) = \left( \rho \tilde{R}, \rho^2 \tilde{u}, \rho^2 \tilde{\lambda}, \rho^2 \tilde{\eps} \right), \]
where $\rho \in [-p,p]$ for some small and positive $p$, which inflates the nilpotent equilibrium at the origin to the hypersphere $\mathbb{S}^3$, i.e.,
\[ \tilde{R}^2 + \tilde{u}^2 + \tilde{\lambda}^2 + \tilde{\eps}^2 =1. \]
On this hypersphere, trajectories with differing orders of tangency are teased apart and enough hyperbolicity is restored that a complete analysis can be performed using dynamical systems techniques. The dynamics on $\mathbb{S}^3$ are typically analysed using an overlapping set of coordinate charts, which cover the hypersphere by planes perpendicular to the coordinate axes. Two particularly useful charts are the entry chart (defined by $\tilde{u}=-1$) and the central (or rescaling) chart (defined by $\tilde{\eps}=1$). 

In the rescaling chart, the blown-up vector field (after desingularization by a factor of $\sqrt{\eps}$) is given by
\begin{equation} \label{eq:blownup}
\begin{split}
\dot{R}_2 &= \overline{a} u_2 + \overline{b} R_2^2 + \sqrt{\eps} \left( \overline{c} R_2 u_2 + \overline{\xi} R_2^3 \right) +\mathcal{O}(\eps), \\
\dot{u}_2 &= \overline{d} R_2 + \sqrt{\eps} \left( \overline{\lambda}+ \overline{e} u_2 + \overline{\eta} R_2^2 \right) + \mathcal{O}(\eps),
\end{split}
\end{equation}
where the overdot denotes time derivatives, $\rho = \sqrt{\eps}$, and the 2-subscript indicates a rescaled variable in the central chart of the blow-up. The unperturbed version (i.e., $\eps=0$) of \eqref{eq:blownup} is the Hamiltonian system 
\begin{equation}	\label{eq:unperturbed}
\begin{split}
\dot{R}_2 &= \overline{a} u_2 + \overline{b} R_2^2, \\
\dot{u}_2 &= \overline{d} R_2,
\end{split}
\end{equation}
which has the Hamiltonian function
\[ \mathscr{H}(R_2,u_2) := \exp \left( -\frac{2\overline{b}u_2}{\overline{d}} \right) \left\{ -\overline{d}R_2^2-\frac{\overline{a}\overline{d}}{\overline{b}}u_2 - \frac{\overline{a}\overline{d}^2}{2\overline{b}^2} \right\}. \]
The unperturbed problem possesses an explicit algebraic solution $\gamma(t_2)$ given by 
\[ \gamma(t_2) = \left( -\frac{\overline{a}\overline{d}}{2\overline{b}}t_2, - \frac{\overline{d}}{2\overline{b}}-\frac{\overline{a}\overline{d}^2}{4\overline{b}}t_2^2 \right). \]

The splitting distance between the attracting and repelling invariant slow manifolds is measured using the Melnikov function, $\mathcal{D}$. Expanding the Melnikov function as a series in the small parameter $\eps$ gives
\[ \mathcal{D}(\eps) = d_{\sqrt{\eps}} \sqrt{\eps} + \mathcal{O}(\eps), \]
where the Melnikov integral $d_{\sqrt{\eps}}$ is computed as
\[ d_{\sqrt{\eps}} = \int_{-\infty}^{\infty} \left. \nabla H \right|_{\gamma} \cdot \left. \begin{pmatrix} \overline{c} R_2 u_2 + \overline{\xi} R_2^3 \\ \overline{\lambda}_2 +\overline{e} u_2 + \overline{\eta} R_2^2 \end{pmatrix} \right|_{\gamma} \, dt_2. \]
Note that this integral only converges provided $\overline{a} \overline{d} <0$, which is also a necessary condition for the trajectories of \eqref{eq:unperturbed} to be closed orbits. Substituting $d_{\sqrt{\eps}}$ into the bifurcation equation $\mathcal{D} = 0$, solving for $\lambda_2$, and reverting to the original unscaled variables gives the result. 
\end{proof}

\subsection{Torus Canard Explosion in the Forced van der Pol Equation} 	\label{app:subsec:fvdp}

We now demonstrate the predictive power of Theorem \ref{thm:3Daveraging} in an analytically tractable example. We consider the forced van der Pol (fvdP) equation in the relaxation limit subject to periodic forcing, given by
\begin{equation}	\label{eq:fvdpcylindrical}
\begin{split}
\dot{x} &= y-\left( \frac{x^3}{3}-x \right), \\
\dot{y} &= \eps \left( -x+\alpha+\beta \cos \theta \right), \\
\dot{\theta} &= \omega,
\end{split}
\end{equation}
where $0<\eps \ll 1$ is small, the forcing is small-amplitude (i.e., $\beta= \mathcal{O}(\eps)$) and the forcing frequency is high (i.e., $\omega = \mathcal{O}(1)$) so that \eqref{eq:fvdpcylindrical} is a 2-fast/1-slow system. In order to implement our results, we first switch from the cylindrical coordinates of \eqref{eq:fvdpcylindrical} to Cartesian coordinates via the transformation $u = x \cos \theta$ and $v = x \sin \theta$. In these Cartesian coordinates, the fvdP equation becomes
\begin{equation}	\label{eq:fvdp}
\begin{split}
\dot{u} &= u-\omega v - \frac{1}{3} u(u^2+v^2) + \frac{u y}{\sqrt{u^2+v^2}}, \\
\dot{v} &= \omega u+v - \frac{1}{3} v(u^2+v^2) + \frac{v y}{\sqrt{u^2+v^2}}, \\
\dot{y} &= \eps \left( -\sqrt{u^2+v^2}+\alpha+\beta \frac{u}{\sqrt{u^2+v^2}} \right).
\end{split}
\end{equation}
The layer problem of \eqref{eq:fvdp} has the manifold of SNPOs, given by
\[ \mathcal{P}_L = \left\{ \Gamma = (u_\Gamma,v_\Gamma, y_\Gamma)=\left( \cos \omega t, \sin \omega t, - \frac{2}{3} \right)  \right\}. \]
That is, there is a single folded limit cycle, which has period $T = \frac{2\pi}{\omega}$, and corresponds to the fold point $(x,y)=(1,-\frac{2}{3})$ of the cubic $x^3/3-x$ in the original cylindrical coordinates. It has been shown that for high-frequency forcing, the fvdP equation has the following properties \cite{Burke2015}:
\begin{itemize}
\setlength{\itemsep}{0pt}
\item system \eqref{eq:fvdpcylindrical} undergoes a singular Hopf bifurcation when $\alpha=1$ and $\beta=0$,
\item system \eqref{eq:fvdpcylindrical} has a torus bifurcation at $\alpha=1$,
\item there is a torus canard explosion along the curves
\[ \alpha = 1 - \frac{\eps}{8} \pm \beta \exp\left( -\frac{\omega^2}{2\eps} \right), \]
in the $(\alpha,\omega)$ plane for $\beta$ at most $\mathcal{O}(1)$. In this high-frequency forcing (i.e., $\omega = \mathcal{O}(1)$) regime, the exponential term is negligible and the torus canard explosion occurs in an exponentially small parameter window, centred on the line $\alpha = 1- \frac{\eps}{8}$.
\end{itemize}

The results on torus canards in the fvdP equation were obtained by studying the fvdP equation in the low- and intermediate-frequency forcing regimes (i.e., $\omega = \mathcal{O}(\eps)$ and $\omega = \mathcal{O}(\sqrt{\eps})$) and then continuing those results into the high-frequency forcing regime \cite{Burke2015}. Until now, there was no direct way to explicitly compute the location of the singular torus canard and its corresponding explosion by starting in the high-frequency forcing regime. We now apply Theorem \ref{thm:3Daveraging} and its consequences to system \eqref{eq:fvdp}. 

A toral folded singularity of \eqref{eq:fvdp} occurs when
\[ \int_0^T g(u_\Gamma,v_\Gamma,y_\Gamma)\, dt = \int_0^T \left( -\sqrt{u^2+v^2} +\alpha + \frac{\beta u}{\sqrt{u^2+v^2}} \right)\, dt = 0.  \]
Solving for $\alpha$ explicitly gives the location of the toral folded singularity in parameter space as
\[ \alpha = \frac{\omega}{2\pi} \int_0^{\frac{2\pi}{\omega}} \left( 1-\beta \cos \omega t \right)\, dt = 1, \]
which agrees with the established result. Now, by Corollary \ref{cor:explosion}, a torus canard explosion occurs in the neighbourhood of the parameter set where
\[ \frac{1}{T} \int_0^T g \left( u_\Gamma,v_\Gamma,y_\Gamma \right) \, dt = \overline{\lambda} \eps \,+\, \mathcal{O}\left( \eps^{3/2} \right),\]
where $\overline{\lambda}$ is the specific combination of averaged coefficients given in Corollary \ref{cor:explosion}. 
The averaged coefficients of \eqref{eq:fvdp} are given in Table \ref{tab:fvdP}.
\begin{table}[h!!]
\renewcommand\arraystretch{1.5}
\centering
\begin{tabular}{|r|c|r|c|r|c|r|c|}
\hline
$\overline{a}$ & $1$ & $\overline{b}$ & $-1$ & $\overline{c}$ & $0$ & $\overline{\xi}$ & $-\frac{1}{3}$ \\
\hline
$\overline{g}$ & $\alpha-1$ & $\overline{d}$ & $-1$ & $\overline{e}$ & $0$ & $\overline{\eta}$ & $0$ \\
\hline
\end{tabular}
\caption{Averaged coefficients from Theorem \ref{thm:3Daveraging} for the fvdP equation.}
\label{tab:fvdP}
\end{table}

Using Table \ref{tab:fvdP}, we find that the expression for $\overline{\lambda}$ simplifies greatly and the condition for the torus canard explosion reduces to
\[ \alpha = 1-\frac{1}{8} \eps \,+\, \mathcal{O}\left( \eps^{3/2} \right). \]
Recall that the actual analytic result is $\alpha = 1-\frac{\eps}{8}$ (plus an exponentially small correction). Thus, in the case of the fvdP oscillator, Theorem \ref{thm:3Daveraging} and Corollary \ref{cor:explosion} give the location of the torus canard explosion (for $\omega = \mathcal{O}(1)$) correct up to exponentially small error.

\end{appendices}

\section*{Acknowledgments} 
This research was partially supported by NSF-DMS 1109587. 
I would like to thank Tasso Kaper, Mark Kramer, Jonathan Rubin, and Martin Wechselberger for helpful discussions. 
I am particularly grateful to Tasso Kaper and Mark Kramer for their careful and critical reading of the manuscript. 
I am especially indebted to Tasso Kaper for being an excellent sounding board for my ideas throughout the development of this project.

\small


\begin{thebibliography}{99}



\bibitem{Benes2011}
     \newblock G. N. Benes, A. M. Barry, T. J. Kaper, M. A. Kramer, and J. Burke,
     \newblock \emph{An elementary model of torus canards},
     \newblock Chaos, \textbf{21} (2011), 023131.
     
\bibitem{Berglund2006}
     \newblock N. Berglund and B. Gentz,
     \newblock \emph{Noise-Induced Phenomena in Slow-Fast Dynamical Systems},
     \newblock 1st edn., Springer-Verlag London, 2006.       

\bibitem{Brons2006}
     \newblock M. Br{\o}ns, M. Krupa, and M. Wechselberger,
     \newblock \emph{Mixed Mode Oscillations Due to the Generalized Canard Phenomenon},
     \newblock Fields Institute Communications, \textbf{49} (2006), pp. 39--63.

     
\bibitem{Burke2012}
     \newblock J. Burke, M. Desroches, A. M. Barry, T. J. Kaper, and M. A. Kramer,
     \newblock \emph{A showcase of torus canards in neuronal bursters},
     \newblock J. Math. Neurosci., \textbf{2} (2012), 3.

\bibitem{Burke2015}
     \newblock J. Burke, M. Desroches, A. Granados, T. J. Kaper, M. Krupa, and T. Vo,
     \newblock \emph{From Canards of Folded Singularities to Torus Canards in a Forced van der Pol Equation},
     \newblock J. Nonlinear Sci., \textbf{26} (2016), pp. 405--451.  

\bibitem{Chicone2006}
     \newblock C. Chicone,
     \newblock ``Ordinary Differential Equations with Applications'',
     \newblock 2$^{nd}$ edition, Springer-Verlag New York, 2006.       

\bibitem{Cymbalyuk2002}
     \newblock G. S. Cymbalyuk, Q. Gaudry, M. A. Masino and R. L. Calabrese,
     \newblock \emph{Bursting in Leech Heart Interneurons: Cell-Autonomous and Network-Based Mechanisms},
     \newblock J. Neurosci., \textbf{22} (2002), pp. 10580--10592.

\bibitem{Cymbalyuk2005}
     \newblock G. Cymbalyuk and A. Shilnikov,
     \newblock \emph{Coexistence of Tonic Spiking Oscillations in a Leech Neuron Model},
     \newblock J. Comput. Neurosci., \textbf{18} (2005), pp. 255--263.
     
\bibitem{Desroches2008}
     \newblock M. Desroches, B. Krauskopf, and H. M. Osinga,
     \newblock \emph{The geometry of slow manifolds near a folded node},
     \newblock SIAM J. Appl. Dyn. Syst., \textbf{7} (2008), pp. 1131--1162.
     
\bibitem{Desroches2010}
     \newblock M. Desroches, B. Krauskopf, and H. M. Osinga,
     \newblock \emph{Numerical continuation of canard orbits in slow-fast dynamical systems},
     \newblock Nonlinearity, \textbf{23} (2010), pp. 739--765.

\bibitem{Desroches2012}
     \newblock M. Desroches, J. Burke, T. J. Kaper, and M. A. Kramer,
     \newblock \emph{Canards of mixed type in a neural burster},
     \newblock Phys. Rev. E, \textbf{85} (2012), 021920.

\bibitem{Desroches2012a}
     \newblock M. Desroches, J. Guckenheimer, B. Krauskopf, C. Kuehn, H. M. Osinga and M. Wechselberger,
     \newblock \emph{Mixed-Mode Oscillations with Multiple Time Scales},
     \newblock SIAM Review, \textbf{54} (2012), pp. 211--288.
     
\bibitem{Doedel1981}
     \newblock E. J. Doedel,
     \newblock \emph{AUTO: A program for the automatic bifurcation analysis of autonomous systems},
     \newblock Congr. Numer., \textbf{30} (1981), pp. 265--284.
     
\bibitem{Doedel1991}
     \newblock E. J. Doedel, H. B. Keller and J. P. Kernevez,
     \newblock \emph{Numerical Analysis and Control of Bifurcation Problems (I): Bifurcation in Finite Dimensions},
     \newblock Int. J. Bifurcat. Chaos, \textbf{01} (1991), pp. 493--520.
     
\bibitem{Doedel2007}
     \newblock E. J. Doedel, A. R. Champneys, T. F. Fairgrieve, Y. A. Kuznetsov, K. E. Oldeman, R. C. Paffenroth, B. Sanstede, X. J. Wang and C. Zhang,
     \newblock \emph{AUTO-07P: Continuation and bifurcation software for ordinary differential equations},
     \newblock Available from: http://cmvl.cs.concordia.ca/      
     

\bibitem{Fenichel1979}
     \newblock N. Fenichel,
     \newblock \emph{Geometric singular perturbation theory for ordinary differential equations},
     \newblock J. Differ. Equations, \textbf{31} (1979), pp. 53--98.


\bibitem{Guckenheimer2008}
     \newblock J. Guckenheimer,
     \newblock \emph{Singular Hopf Bifurcation in Systems with Two Slow Variables},
     \newblock SIAM J. Appl. Dyn. Syst., \textbf{7} (2008), pp. 1355--1377.

\bibitem{Harvey2011}
     \newblock E. Harvey, V. Kirk, M. Wechselberger, and J. Sneyd,
     \newblock \emph{Multiple Timescales, Mixed Mode Oscillations and Canards in Models of Intracellular Calcium Dynamics},
     \newblock J. Nonlinear Sci., \textbf{21} (2011), pp. 639--683.     

\bibitem{Izhikevich2000}
     \newblock E. M. Izhikevich,
     \newblock \emph{Neural excitability, spiking and bursting},
     \newblock Int. J. Bifurcat. Chaos, \textbf{10} (2000), pp. 1171--1266.
     
\bibitem{Izhikevich2000b}
     \newblock E. M. Izhikevich,
     \newblock \emph{Subcritical elliptic bursting of Bautin type},
     \newblock SIAM J. Appl. Math., \textbf{60} (2000), pp. 503--535.
     
\bibitem{Izhikevich2001}
     \newblock E. M. Izhikevich,
     \newblock \emph{Synchronization of Elliptic Bursters},
     \newblock SIAM Review, \textbf{43} (2001), pp. 315--344.  
     
\bibitem{Jones1995}
     \newblock C. K. R. T. Jones,
     \newblock \emph{Geometric singular perturbation theory},
     \newblock in "Dynamical Systems" (ed. R. Johnson), Lecture Notes in Mathematics, Springer, New York (1995), pp. 44--120.     

\bibitem{Keener2008}
     \newblock J. Keener and J. Sneyd,
     \newblock \emph{Mathematical Physiology},
     \newblock 2nd edn. Springer, New York (2008). 
     
\bibitem{Kramer2008}
     \newblock M. Kramer, R. Traub, and N. Kopell,
     \newblock \emph{New Dynamics in Cerebellar Purkinje Cells: Torus Canards},
     \newblock Phys. Rev. Lett., \textbf{101} (2008), 068103.

\bibitem{Krupa2001}
     \newblock M. Krupa and P. Szmolyan,
     \newblock \emph{Extending geometric singular perturbation theory to nonhyperbolic points--fold and canard points in two dimensions},
     \newblock SIAM J. Math. Anal., \textbf{33} (2001), pp. 286--314.     

\bibitem{Krupa2010}
     \newblock M. Krupa and M. Wechselberger,
     \newblock \emph{Local analysis near a folded saddle-node singularity},
     \newblock J. Differ. Equations, \textbf{248} (2010), pp. 2841--2888.

     
\bibitem{Kuehn2015}
     \newblock C. Kuehn,
     \newblock \emph{Multiple Time Scale Dynamics},
     \newblock 1st edn. Springer International Publishing (2015).      
     
     
\bibitem{Neishtadt1987}
     \newblock A. I. Neishtadt,
     \newblock \emph{Persistence of stability loss for dynamical bifurcations. I},
     \newblock Diff. Equat+, \textbf{23} (1987), pp. 1385--1391.    
     
\bibitem{Neishtadt1988}
     \newblock A. I. Neishtadt,
     \newblock \emph{Persistence of stability loss for dynamical bifurcations. II},
     \newblock Diff. Equat+, \textbf{24} (1988), pp. 171--176.           
     
\bibitem{Petrov1992}
     \newblock V. Petrov, S. K. Scott and K. Showalter,
     \newblock \emph{Mixed-mode oscillations in chemical systems},
     \newblock J. Chem. Phys., \textbf{97} (1992), pp. 6191--6198.

\bibitem{Politi2006}
     \newblock A. Politi, L. D. Gaspers, A. P. Thomas, and T. H\"{o}fer,
     \newblock \emph{Models of \ip3 and Ca$^{2+}$ oscillations: frequency encoding and identification of underlying feedbacks},
     \newblock Biophys. J., \textbf{90} (2006), pp. 3120--3133.

\bibitem{Pontryagin1960}
     \newblock L. S. Pontryagin and L. V. Rodygin,
     \newblock \emph{Approximate solution of a system of ordinary differential equations involving a small parameter in the derivatives},
     \newblock Sov. Math. Dokl., \textbf{1} (1960), pp. 237--240.
     
\bibitem{Rinzel1998}
     \newblock J. Rinzel and B. Ermentrout,
     \newblock \emph{Analysis of neural excitability and oscillations},
     \newblock in "Methods in Neuronal Modeling" (ed. C. Koch and I. Segev), MIT Press, Cambridge (1998), pp. 251--292.          

\bibitem{Roberts2015}
     \newblock K.-L. Roberts, J. Rubin, and M. Wechselberger,
     \newblock \emph{Averaging, foIded singularities and torus canards: explaining transitions between bursting and spiking in a coupled neuron model},
     \newblock SIAM J. Appl. Dyn. Syst., \textbf{14} (2015), pp. 1808--1844.

     
     
\bibitem{Roussarie1993}
     \newblock R. Roussarie,
     \newblock \emph{Techniques in the theory of local bifurcations: Cyclicity and desingularization},
     \newblock in "Bifurcations and Periodic Orbits of Vector Fields" (ed. D. Szlomiuk), Kluwer Academic, Dordrecht (1993), pp. 347--382.          

\bibitem{Rubin2000}
     \newblock J. E. Rubin and D. Terman,
     \newblock \emph{Geometric Singular Perturbation Analysis of Neuronal Dynamics},
     \newblock in "Handbook of Dynamical Systems" (ed. B. Fiedler), North-Holland (2000), pp. 93--146.
     
\bibitem{Sanders2007}
     \newblock J. A. Sanders, F. Verhulst and J. A. Murdock,    
     \newblock ``Averaging Methods In Nonlinear Dynamical Systems'',
     \newblock 3$^{rd}$ edition, Springer, New York, 2007.     
     
\bibitem{Straube2006}
     \newblock R. Straube, D. Flockerzi, and M. J. B. Hauser,
     \newblock \emph{Sub-Hopf/fold-cycle bursting and its relation to (quasi-)periodic oscillations},
     \newblock J. Phys. Conf. Ser., \textbf{55} (2006), pp. 214--231.

\bibitem{Szmolyan2001}
     \newblock P. Szmolyan and M. Wechselberger,
     \newblock \emph{Canards in $\mathbb{R}^3$},
     \newblock J. Differ. Equations, \textbf{177} (2001), pp. 419--453.
     
\bibitem{Szmolyan2004}
     \newblock P. Szmolyan and M. Wechselberger,
     \newblock \emph{Relaxation Oscillations in $\mathbb{R}^3$},
     \newblock J. Differ. Equations, \textbf{200} (2004), pp. 69--104.
     
\bibitem{Terman1991}
     \newblock D. Terman,
     \newblock \emph{Chaotic spikes arising from a model of bursting in excitable membranes},
     \newblock SIAM J. Appl. Math., \textbf{51} (1991), pp. 1418--1450.
     
\bibitem{Tsaneva2010}
     \newblock K. Tsaneva-Atanasova, H. M. Osinga, T. Rie$\beta$ and A. Sherman,
     \newblock \emph{Full system bifurcation analysis of endocrine bursting models},
     \newblock J. Theor. Biol., \textbf{264} (2010), pp. 1133--1146.
     
\bibitem{Vo2015}
     \newblock T. Vo and M. Wechselberger,
     \newblock \emph{Canards of Folded Saddle-Node Type I},
     \newblock SIAM J. Math. Anal., \textbf{47} (2015), pp. 3235--3283.
     
\bibitem{Wechselberger2005}
     \newblock M. Wechselberger,
     \newblock \emph{Existence and bifurcation of canards in $\mathbb{R}^3$ in the case of a folded node},
     \newblock SIAM J. Appl. Dyn. Syst., \textbf{4} (2005), pp. 101--139.                 
     
\bibitem{Wechselberger2009}
     \newblock M. Wechselberger and W. Weckesser,
     \newblock \emph{Bifurcations of mixed-mode oscillations in a stellate cell model},
     \newblock Physica D, \textbf{238} (2009), pp. 1598--1614.

\bibitem{Wechselberger2012}
     \newblock M. Wechselberger,
     \newblock \emph{\`{A} propos de canards (apropos canards)},
     \newblock T. Am. Math. Soc., \textbf{364} (2012), pp. 3289--3309.
     
\end{thebibliography}
\end{document}